\documentclass[12pt, oneside,reqno]{amsart}
\setlength{\textwidth}{6.6 in}
\setlength{\textheight}{9 in}
\hoffset=-64pt
\voffset=-24pt
\usepackage{amsmath, amsfonts, amssymb}
\usepackage{eucal}
\usepackage{mathrsfs}
\usepackage{latexsym}
\usepackage{bbm}
\usepackage{mathtools}
\usepackage{exscale}
\usepackage{cases}

\usepackage[pdfstartview=FitH,
            CJKbookmarks=true,
            bookmarksnumbered=true,
            bookmarksopen=true,
            colorlinks=true,
            linkcolor=blue,
            anchorcolor=blue,
            citecolor=red,
            urlcolor=blue
            ]{hyperref}

\numberwithin{equation}{section}
\hyphenpenalty=5000
\tolerance=1000
\hyphenation{Theo-rem}

\newtheorem{theorem}{Theorem}[section]
\newtheorem{definition}[theorem]{Definition}

\newtheorem{corollary}[theorem]{Corollary}
\newtheorem{proposition}[theorem]{Proposition}

\newtheorem{definition and theorem}[theorem]{Definition and Theorem}

\def\bl{\begin{lem}}
\def\el{\end{lem}}
\def\bc{\begin{corollary}}
\def\ec{\end{corollary}}
\def\bt{\begin{thm}}
\def\et{\end{thm}}

\def\bp{\begin{proposition}}
\def\ep{\end{proposition}}
\def\be{\begin{equation}}
\def\ee{\end{equation}}
\def\baa{\begin{align*}}
\def\eaa{\end{align*}}

\def\bd{\begin{definition}}
\def\ed{\end{definition}}

\allowdisplaybreaks[4]
\theoremstyle{plain}
\newtheorem{thm}{Theorem}[section]
\newtheorem*{Thm1}{Theorem 1}
\newtheorem*{Thm2}{Theorem 2}

\newtheorem{lem}[thm]{Lemma}
\newtheorem{prop}[thm]{Proposition}
\newtheorem{cor}[thm]{Corollary}

\theoremstyle{remark}

\newcommand{\lsub}[1]{\hskip -1.0pt\lower.3ex\hbox{$_{#1}$}}

\theoremstyle{definition}

\newcommand{\sn}{\mathbb S^{n-1}}

\newcommand{\tr}{\mathbb R}
\newcommand{\rn}{\mathbb R^n}

\newcommand{\oc}{\gamma_{n,\alpha}}
\newcommand{\Sa}{{\rm S}_{\alpha}f}

\newcommand{\Snf}{{\rm S}_{n}f}
\newcommand{\di}{\int_{\rn}\int_{\rn}}
\newcommand{\rS}{{\rm S}}
\newcommand{\rR}{{\rm R}}
\newcommand{\Ra}{{\rm R}_{\alpha}}
\newcommand{\Rf}{{\rm R}_{\alpha} f}

\newcommand{\Pit}{\Pi_2^{*,-\alpha/2}f}
\newcommand{\Rmnum}[1]{\expandafter\@slowromancap\romannumeral #1@} 
\renewcommand{\chi}{\operatorname{1}}

\title[Affine Logarithmic HLS and Beckner-Type Logarithmic Sobolev Inequalities]{Affine Logarithmic HLS and Beckner-Type Logarithmic Sobolev Inequalities}
\author{Xiaxing Cai}

\begin{document}

\begin{abstract}
    In this paper, we consider two limiting cases ($\alpha\to n$ and $\alpha\to0$) of the affine HLS inequalities introduced by Haddad and Ludwig. We establish an affine logarithmic HLS inequality and an affine version of Beckner’s logarithmic Sobolev inequality, both of which are stronger than their classical counterparts — namely, the logarithmic HLS inequality of Carlen and Loss (1992) and Beckner (1993), and Beckner’s logarithmic Sobolev inequality (1995). For both inequalities, we obtain sharp constants together with a complete characterization of all equality cases.
\end{abstract}

\maketitle

\section{Introduction}

Affine inequalities, that is, inequalities which remain invariant under translations and volume-preserving linear transforms, have attracted considerable attention in the study of functional inequalities in recent years. The best-known affine inequality is the affine Sobolev inequality by Zhang  \cite{Zh}, which strengthens the classical Sobolev inequality. It was extended to functions of bounded variation by Wang  \cite{Wa}. Affine versions of $L^p$ Sobolev inequalities were established by Lutwak, Yang and Zhang  \cite{LYZ1, LYZ2} and Haberl and Schuster  \cite{HS}. Recently, affine versions of fractional $L^p$ Sobolev inequalities were established by Haddad and Ludwig  \cite{HL1, HL2}. 


More recently, Haddad and Ludwig  \cite{HL3} established the following affine Hardy-Littlewood-Sobolev inequalities (affine HLS inequalities) which are significantly stronger than the sharp HLS inequalities by Lieb  \cite{Lieb}.
\vskip 6pt

\noindent{\bf Affine HLS inequalities (Haddad and Ludwig  \cite{HL3}):} For $\alpha\in(0,n)$ and non-negative $f\in L^{2n/(n+\alpha)}(\rn)$,
\begin{equation}\label{AHLS}
\begin{split}
    \gamma_{n,\alpha}\|f\|^2_{\frac{2n}{n+\alpha}}\ge n\omega_n^{\frac{n-\alpha}{n}}\Big(\frac{1}{n}\int_{\sn}\Big(\int_0^{\infty}t^{\alpha-1}\int_{\rn}  f(x)f(x+tu)dxdt\Big)^{\frac{n}{\alpha}} \Big)^{\frac{\alpha}{n}}\\
    \ge \int_{\rn}\int_{\rn}\frac{f(x)f(y)}{|x-y|^{n-\alpha}}dxdy.
\end{split}
\end{equation}
 There is equality in the first inequality precisely if $f(x)=a(1+|\phi(x-x_0)|^2)^{-(n+\alpha)/2}$ for $x\in\rn$ with $a\ge 0$, $\phi\in {\rm GL}(n)$ and $x_0\in\rn$. There is equality in the second inequality if $f$ is radially symmetric.

Here $|\cdot|$ is the Euclidean norm in $\rn$ and $\omega_n$ denotes the volume of the $n$-dimensional unit ball. The function $f\in L^p(\rn)$ is a measurable function with $\|f\|_p^p:=\int_{\rn}|f(x)|^pdx<\infty$.
The optimal constant $\gamma_{n,\alpha}$ comes from the classic HLS inequalities and is given by
\begin{equation}\label{const 1}
    \gamma_{n,\alpha}=\pi^{\frac{n-\alpha}{2}}\frac{|\Gamma(\frac{\alpha}{2})|}{\Gamma(\frac{n+\alpha}{2})}\Big(\frac{\Gamma(n)}{\Gamma(\frac{n}{2})}\Big)^{\frac{\alpha}{n}},
\end{equation}
where $\Gamma$ is the gamma function. Haddad and Ludwig also showed that the inequality \eqref{AHLS} reverses for $\alpha>n$, strengthening the sharp HLS inequalities for $\alpha>n$ by Dou and Zhu  \cite{DZ} and Beckner  \cite{Be}.

Eliminating the intermediate term in \eqref{AHLS} gives the classic HLS inequalities. As a limiting case of HLS inequalities ($\alpha\rightarrow n$), Carlen and Loss  \cite{CL} established the following sharp logarithmic HLS inequality as well as the equality case.
\vskip 6pt

\noindent{\bf The logarithmic HLS inequality (Carlen and Loss  \cite{CL}):} For non-negative functions $f\in L^1(\rn)$ such that $\|f\|_1=1$, $\int_{\rn}f(x)\log(1+|x|^2)dx<\infty$ and $\int_{\rn}f(x)\log f(x)dx$ is finite,
\begin{equation}\label{log HLS}
-\int_{\rn}\int_{\rn}f(x)\log|x-y|f(y)dxdy\leq\frac{1}{n}\int_{\rn}f(x)\log f(x)dx+ \gamma_n.
\end{equation}
The constant $\gamma_n=\frac{d}{d\alpha}
\big|_{\alpha=n}\gamma
_{n,\alpha}$ is given by 
\begin{equation*}\label{gamma n}
\gamma_n=\frac{1}{2}\log\pi+\frac{1}{n}\log\frac{\Gamma(\frac{n}{2})}{\Gamma(n)}+\frac{1}{2}\big(\psi(n)-\psi(\frac{n}{2})\big),
\end{equation*}
where $\psi$ is the logarithmic derivative of the gamma function.  Equality holds in \eqref{log HLS} precisely if $f(x)=a(1+\lambda|x-x_0|^2)^{-n}$ for $x_0\in\rn$ and $a, \lambda>0$ such that $\|f\|_1=1$.

The aim of this paper is to establish the limiting cases of the affine HLS inequalities. We use the star-shaped set $\Sa$ introduced by Haddad and Ludwig in  \cite{HL3}. 
For a measurable function $f:\rn\rightarrow [0,\infty)$ and $\alpha>0$, the star-shaped set $\Sa$ related to HLS inequalities, is defined by its radial function as
\[\rho_{\Sa}(u)^{\alpha}=\int_0^{\infty}t^{\alpha-1}\int_{\rn}f(x)f(x+tu)dxdt,\]
for $u\in\sn$. 
The affine HLS inequalities \eqref{AHLS} for $\alpha\in(0,n)$ can be written as
\begin{equation}\label{aff HLS}
    \gamma_{n,\alpha}\|f\|_{\frac{2n}{n+\alpha}}^2\ge n\omega_n^{\frac{n-\alpha}{n}}|\Sa|^{\frac{\alpha}{n}}\ge\di\frac{f(x)f(y)}{|x-y|^{n-\alpha}}dxdy,
\end{equation}
where $|\cdot|$ denotes the $n$-dimensional Lebesgue measure. Since 
\[{\rm S}_{\alpha}(f\circ \phi^{-1})=\phi\Sa\]
for volume-preserving linear transformation $\phi:\rn\rightarrow\rn$, the first inequality from \eqref{aff HLS} is affine invariant.


The limiting case of the affine HLS inequalities as $\alpha\rightarrow n$ leads to the following affine logarithmic HLS inequality, which is stronger than the Euclidean case by Carlen and Loss. 

\begin{Thm1}[The affine Logarithmic HLS inequality]
    For non-negative $f\in L^1(\rn)$ such that $\|f\|_1=1$,  $\int_{\rn}f(x)\log(1+|x|^2)dx<\infty$ and $\int_{\rn}f(x)\log f(x)dx$ is finite,  
{ 
\begin{equation*}
    \begin{aligned}
        -\int_{\rn}&\int_{\rn}f(x)\log|x-y|f(y)dxdy\\
        &\leq-\int_{\rn}\int_{\rn}f(x)\log\|x-y\|_{\Snf}f(y)dxdy+\frac{1}{n}\log(n\omega_n)\leq\frac{1}{n}\int_{\rn}f(x)\log f(x)dx+\gamma_n.
    \end{aligned}
\end{equation*}
}There is equality in the second inequality if and only if $f(x)=a(1+|\phi(x-x_0)|^2)^{-n}$ for $x_0\in\rn$, $a>0$ and $\phi\in {\rm GL}(n)$ such that $\|f\|_1=1$. There is equality in the first inequality if $f$ is radially symmetric.
\end{Thm1}

Theorem 1 will be proved in Section 3 by applying Riesz's rearrangement inequality for the logarithmic kernel, which extends the result of Carlen and Loss \cite[Lemma 1]{CL}.
Although Theorem 1 can also be obtained by differentiating each term in \eqref{aff HLS}, the rearrangement approach has the additional advantage of facilitating a precise characterization of the equality cases.

The limiting behavior of the HLS inequalities as $\alpha\rightarrow 0^+$ was considered by Beckner  \cite{Be2}. Beckner shows that the HLS inequalities become an identity in the limit and establishes the following logarithmic Sobolev inequality by differentiation of the HLS inequalities.
\vskip 6pt

\noindent{\bf Beckner's logarithmic Sobolev inequality  \cite{Be2}}: Let $\beta\in(0,\frac{1}{2})$. For non-negative smooth rapidly decreasing functions $f\in W^{\beta,2}(\rn)$ such that $\|f\|_2=1$,
\[\int_{\rn}|\hat{f}(\xi)|^2\log|\xi|d\xi\ge \frac{2}{n}\int_{\rn}f(x)^2\log f(x)dx+\psi(\frac{n}{2})-\frac{1}{2}\log\pi-\frac{1}{n}\log\frac{\Gamma(n)}{\Gamma(\frac{n}{2})},\]
where $W^{\beta,2}(\rn)$ is the fractional Sobolev space defined by
\[W^{\beta,2}(\rn)=\Big\{f\in L^2(\rn):\di\frac{|f(x)-f(y)|^2}{|x-y|^{n+2\beta}}dxdy<\infty\Big\}\]
and $\hat f$ is the Fourier transform of $f$ given by
\[\hat{f}(\xi)=\int_{\rn}e^{-2\pi i\xi\cdot x}f(x)dx.\]
Moreover, the equality is attained { if and only if} $f(x)=a(1+\lambda|x-x_0|^2)^{-\frac{n}{2}}$ for $a,\lambda>0$ and $x_0\in\rn$ such that $\|f\|_2=1$.

In Section $4$, we establish the following affine version of Beckner's logarithmic Sobolev inequality, which is the limiting case of the affine HLS inequalities as $\alpha\rightarrow 0^+$.  In addition, this new affine inequality is also a limiting case of affine fractional $L^2$ Sobolev inequalities, which is introduced in Section $2$.

\begin{Thm2}
    For $\beta\in(0,\frac{1}{2})$, suppose { $f\in W^{\beta,2}(\rn)$} is non-negative with   $\|f\|_2=1$ and $\int_{\rn}f(x)^2\log f(x)dx$ is finite. Then
    {\begin{align}\label{main 2i}
    \gamma_0-\frac{2}{n}\int_{\rn}f(x)^2\log f(x)dx
     \ge\frac{1}{n}\log\Big(\frac{|{\rm R}_0f|}{\omega_n}\Big)\ge\frac{1}{n\omega_n}\int_{\sn}\log\rho_{{\rm R}_0f}(u)du,
    \end{align}}
    where $\rR_0 f$ is the star-shaped set given by
    \[\log\rho_{\rR_0f}(u)=-\gamma+\int_0^{\infty}\frac{1}{t}\Big(\int_{\rn}f(x)f(x+tu)dx-e^{-t}\Big)dt.\]
    Here $-\gamma=\psi (1)$ is the Euler constant. The constant $\gamma_0=\frac{1}{n\omega_n}\frac{d}{dt}\big|_{\alpha=0}\alpha\gamma_{n,\alpha}$ is given by
    \[\gamma_0=-\frac{1}{2}\log\pi+\frac{1}{n}\log\frac{\Gamma(n)}{\Gamma(\frac{n}{2})}+\frac{1}{2}\big(\psi(1)-\psi(\frac{n}{2})\big).\]
    There is equality in the first inequality if and only if $f(x)=a(1+|\phi(x-x_0)|^2)^{-\frac{n}{2}}$ for $x_0\in\rn$, $a> 0$ and $\phi\in {\rm GL}(n)$ such that $\|f\|_2=1$. There is equality in the second inequality if $f$ is radially symmetric.
\end{Thm2}

The set $\rR_0f$ here is the $0$-th radial mean body and
\[\rho_{\rR_0 f}(u)=\lim_{\alpha\rightarrow 0^+}\rho_{\Ra f}(u),~~\text{for almost all}~~u\in\sn,\]
 where the $\alpha$-th radial mean body $\Ra f$ is defined by a certain normalization of $\Sa$. Hence for a volume-preserving transform $\phi:\rn \rightarrow \rn$, we have
\[\rR_0(f\circ\phi^{-1})=\phi\rR_0f,\]
which implies the affine invariance of the first inequality from \eqref{main 2i}. The precise definition of $\Ra f$ is given by Haddad and Ludwig \cite{HL3}, which will be recalled in Section $2$. 

Eliminating the intermediate term in \eqref{main 2i} gives Beckner's inequality in an equivalent form for non-negative and non-zero functions $f\in W^{\beta,2}(\rn)\cap L^1(\rn)$, as will be shown in Section 4. While Beckner's original paper only briefly touches on the equality conditions, Section 5 provides a comprehensive characterization for both Beckner's inequality and its affine counterpart. In particular, by combining a central result of that section, Theorem \ref{main 5}, with technical results from Carlen and Loss \cite{CL}, we give a clear and detailed explanation of the equality conditions for Beckner's inequality. Building on the same result, we then obtain a complete description of the equality cases for the affine version.
    
\section{Preliminaries}

We collect some basic notation and results on Schwarz symmetrals, star-shaped sets, the fractional Laplacian operator, $L^2$ polar projection bodies and  radial mean bodies. The books of Gardner \cite{Gar}, Schneider \cite{Sch} and Lieb and Loss \cite{LL} are good general references.

\subsection{Symmetrization} For $E\subset\rn$, the characteristic function $\chi_E$ is defined by $\chi_{E}(x)=1$ for $x\in E$, and $\chi_E(x)=0$ for $x\notin E$. If $E$ is a Borel set of finite measure, the Schwarz symmetral of $E$, denoted by $E^{\star}$, is the centered Euclidean ball with the same volume as $E$. We denote the Euclidean unit ball in $\rn$ as $B^n$. 

Let $f:\rn\rightarrow \tr$ be a non-negative measurable function. The superlevel set of $f$ is denoted by $\{f\ge t\}:=\{x\in\rn: f(x)\ge t\}$ for $t> 0$. We say $f$ is non-zero if $\{f\neq 0\}$ has positive measure. For $f$ with superlevel sets of finite measure, by the layer cake formula,
  \[f(x)=\int_0^{\infty}\chi_{\{f\ge t\}}(x)dt\]
for almost every $x\in\rn$. 

The Schwarz symmetral of $f$, denoted by $f^{\star}$, is defined as
\[f^{\star}(x)=\int_0^{\infty}\chi_{\{f\ge t\}^{\star}}(x)dt\]
for $x\in\rn$. Hence $f^{\star}$ is radially symmetric and the superlevel set $\{f^{\star}\ge t\}$ has the same volume as $\{f\ge t\}$ for every $t> 0$. The Schwarz symmetral $f^{\star}$ is also called the symmetric decreasing rearrangement of $f$. If $|x|<|y|$ implies $f^{\star}(x)>f^{\star}(y)$, we say $f^{\star}$ is strictly symmetric decreasing.

One crucial technique in our proofs is the Riesz rearrangement inequality (see, for example  \cite[Theorem 3.7]{LL}) and the characterization of equality case due to Burchard  \cite{Bu}.

\begin{thm}[Riesz's rearrangement inequality]\label{RRI}
    For $f,g,k:\rn\rightarrow\tr$ non-negative, measurable functions with superlevel sets of finite measure,
    \[\di f(x)k(x-y)g(y)dxdy\leq \di f^{\star}(x)k^{\star}(x-y)g^{\star}(y)dxdy.\]
\end{thm}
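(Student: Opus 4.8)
The plan is to reduce Theorem~\ref{RRI} to an inequality between three \emph{sets}, to prove that set inequality in dimension one, and then to lift it to all dimensions by Steiner symmetrization together with a limiting argument. Concretely, using the layer-cake representation recalled above — writing $f=\int_0^\infty\chi_{\{f>s\}}\,ds$ and similarly for $g$ and $k$, expanding the resulting trilinear form, and applying Tonelli — the left-hand side of the claimed inequality equals
\[
\int_0^\infty\!\!\int_0^\infty\!\!\int_0^\infty \mathcal I\big(\{f>s\},\{g>r\},\{k>t\}\big)\,ds\,dr\,dt,
\qquad
\mathcal I(A,B,C):=\di \chi_A(x)\,\chi_C(x-y)\,\chi_B(y)\,dx\,dy .
\]
Since the superlevel sets of $f^\star$ are the Schwarz symmetrals of those of $f$ (and likewise for $g^\star,k^\star$), the right-hand side is the same triple integral with every superlevel set replaced by its symmetral. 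Hence it suffices to prove $\mathcal I(A,B,C)\le\mathcal I(A^\star,B^\star,C^\star)$ for all Borel sets $A,B,C\subset\rn$ of finite measure; truncating by a centered ball $B_R$ of radius $R$ and letting $R\to\infty$ (monotone convergence on both sides, using $A^\star\subset B_R$ whenever $|A|\le|B_R|$), I may moreover assume $A,B,C\subset B_R$.

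Next I would settle the one-dimensional case: for $A,B,C\subset\tr$ of finite measure one has $\mathcal I(A,B,C)\le\mathcal I(A^\star,B^\star,C^\star)$. By approximation it suffices to treat finite unions of intervals, and there one argues by the classical sliding (compression) procedure of F.~Riesz: reflecting a suitably chosen component of one of the three sets across an appropriate point does not decrease $\mathcal I$ while concentrating the configuration about the origin, and iterating replaces $A,B,C$ by the centered intervals $A^\star,B^\star,C^\star$. (After the substitution $z=x-y$ one has $\mathcal I(A,B,C)=\int_{\tr}\chi_C(z)\,(\chi_A*\chi_{-B})(z)\,dz$, and the sliding is precisely what compensates for the fact that $\chi_A*\chi_{-B}$ need not be unimodal whereas $\chi_{A^\star}*\chi_{B^\star}$ is.)

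Then I would propagate to $\rn$. Fix a unit vector $v$ and decompose $x=x'+sv$, $y=y'+s'v$ with $x',y'\in v^\perp$; since $x-y=(x'-y')+(s-s')v$, for fixed $x',y'$ the inner integral over $(s,s')$ is exactly $\mathcal I$ (now in one variable) of the sections $A_{x'}=\{s:x'+sv\in A\}$, $B_{y'}$ and $C_{x'-y'}$. Applying the one-dimensional inequality to these sections and integrating over $x',y'\in v^\perp$ gives $\mathcal I(A,B,C)\le\mathcal I(S_vA,S_vB,S_vC)$, where $S_v$ is Steiner symmetrization with respect to $v^\perp$, whose section over $w\in v^\perp$ is by definition the centered interval of the same length as the corresponding section of the original set, so that $(S_vA)_w=(A_w)^\star$. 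Finally I would invoke the classical convergence theorem for iterated Steiner symmetrizations: there is a sequence of directions $v_1,v_2,\dots$ with $S_{v_N}\cdots S_{v_1}A\to A^\star$ in $L^1$, simultaneously for $A$, $B$ and $C$; since each $S_v$ keeps subsets of $B_R$ inside $B_R$, and $\mathcal I$ is continuous on triples of subsets of $B_R$ with respect to $L^1$-convergence of characteristic functions (the integrand is $\le 1$ and the domain of integration is bounded by $|x|,|y|\le R$), passing to the limit yields $\mathcal I(A,B,C)\le\mathcal I(A^\star,B^\star,C^\star)$.

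The main obstacle is the one-dimensional case: that is where the rearrangement phenomenon genuinely occurs, and the bookkeeping of which component to reflect, and why no such reflection can decrease $\mathcal I$, is the delicate point (it is responsible for the trapezoidal shape of the one-dimensional extremizers). Once that lemma and the convergence theorem for iterated Steiner symmetrizations are available, the reduction to sets and the limiting argument are routine. (The much deeper equality characterization — essentially that $A,B,C$ must be homothetic copies of a common centered ellipsoid with compatible parameters — is due to Burchard and is not needed for Theorem~\ref{RRI} itself.)
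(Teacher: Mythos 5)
The paper does not prove this statement at all: it is quoted as a classical result, with an explicit citation to Lieb--Loss \cite[Theorem 3.7]{LL} (and Burchard \cite{Bu} for the equality cases), so there is no in-paper argument to compare against. Your outline is essentially the standard proof found in that cited literature: layer-cake reduction of the trilinear form to the set inequality $\mathcal I(A,B,C)\le\mathcal I(A^\star,B^\star,C^\star)$, the one-dimensional Riesz lemma, propagation to $\rn$ by Steiner symmetrization of the one-dimensional sections, and the convergence theorem for iterated Steiner symmetrizations, with $L^1$-continuity of $\mathcal I$ on triples of subsets of a fixed ball justifying the passage to the limit; the truncation and monotone-convergence steps are handled correctly. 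The two ingredients you black-box are genuinely the substance of the theorem: the one-dimensional case (which your description, ``reflecting a suitably chosen component,'' conflates somewhat with the polarization/two-point-symmetrization route, whereas the classical Riesz argument slides components toward the center --- either route can be made to work, but neither is carried out here), and the fact that a single sequence of directions can be chosen so that the iterated Steiner symmetrals of all three sets converge simultaneously to balls in $L^1$ (true, via a universal or set-adapted sequence, but it deserves a citation). As a blind reconstruction of the proof of a cited classical theorem, the skeleton is correct and matches the standard source; it is a proof sketch rather than a complete proof.
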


\begin{thm}[Burchard]\label{RRI B}
    Let $f,g,k:\rn\rightarrow \tr$ non-negative, non-zero, measurable functions with superlevel sets of finite measure such that
    \[\di f(x)k(x-y)g(y)dxdy<\infty.\]
    If at least two of the Schwarz symmetrals $f^{\star}, g^{\star}, k^{\star}$ are strictly symmetric decreasing, then there is equality in
    \[\di f(x)k(x-y)g(y)dxdy\leq \di f^{\star}(x)k^{\star}(x-y)g^{\star}(y)dxdy,\]
    if and only if there is a volume-preserving $\phi\in{\rm GL}(n)$ and $a,b,c\in\rn$ with $c=a+b$ such that
    \[f(x)=f^{\star}(\phi^{-1}x-a), k(x)=k^{\star}(\phi^{-1}x-b), g(x)=g^{\star}(\phi^{-1}x-c),\]
    for $x\in\rn$.
\end{thm}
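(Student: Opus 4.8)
The plan is to pass from general functions to indicator functions of sets via the layer-cake representation, to settle the equality question for three sets, and then to reassemble the information carried by all superlevel sets into a single volume-preserving linear map and three translation vectors. First I would use $f=\int_0^{\infty}\chi_{\{f>s\}}\,ds$ together with $\{f^{\star}>s\}=\{f>s\}^{\star}$ (and the analogous identities for $k$ and $g$); trilinearity and Fubini, legitimate because the triple integral is finite, rewrite both sides of the asserted inequality as integrals over $(s,t,u)\in(0,\infty)^3$ of $J(A,B,C):=\di\chi_A(x)\chi_B(x-y)\chi_C(y)\,dx\,dy$, evaluated at $(A,B,C)=(\{f>s\},\{k>t\},\{g>u\})$ and at $(A^{\star},B^{\star},C^{\star})$ respectively. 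Since Theorem~\ref{RRI} gives $J(A,B,C)\le J(A^{\star},B^{\star},C^{\star})$ term by term, equality in the integrated inequality forces $J(A,B,C)=J(A^{\star},B^{\star},C^{\star})$ for Lebesgue-almost every level triple whose right-hand side is positive.

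The heart of the argument is the rigidity for sets: if $A,B,C\subset\rn$ have finite positive measure, $J(A,B,C)=J(A^{\star},B^{\star},C^{\star})>0$, and the radii of $A^{\star},B^{\star},C^{\star}$ satisfy the three strict triangle inequalities (the non-degenerate regime), then, up to null sets, $A=\phi A^{\star}+a'$, $B=\phi B^{\star}+b'$, $C=\phi C^{\star}+c'$ for a single volume-preserving $\phi\in{\rm GL}(n)$ with $c'=a'+b'$. To obtain this I would exploit the Steiner-symmetrization proof of Riesz's inequality. Because $J(A^{\star},B^{\star},C^{\star})$ is the maximal value of $J$ over all triples with the given measures, and Steiner symmetrization in any direction $e$ preserves measures and does not decrease $J$, equality $J(S_eA,S_eB,S_eC)=J(A,B,C)$ holds for \emph{every} direction $e$. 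Slicing along $e$ and using Fubini reduces this to the one-dimensional equality case of Riesz's inequality for almost every line parallel to $e$; in the non-degenerate regime the three sections are then intervals, symmetric about three collinear points whose positions obey the additive relation. Letting $e$ range over all of $\sn$, almost every chord of $A$ is symmetric about its midpoint and, in each fixed direction, these midpoints fill a hyperplane; hence (up to a null set) $A$ is a convex body whose parallel-chord midpoint loci are all hyperplanes, i.e.\ an ellipsoid, and likewise for $B$ and $C$. Finally, the additive relations among the midpoint maps, which are affine in each direction, force the three ellipsoids to have proportional defining quadratic forms---so they are translates of a common dilate, giving the single $\phi$---and simultaneously pin down the translation identity.

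Returning to the general functions, the hypothesis that at least two of $f^{\star},g^{\star},k^{\star}$ are strictly symmetric decreasing makes the corresponding distribution functions continuous and strictly monotone, which guarantees that for almost every value of the third function a positive-measure set of level pairs satisfies the strict triangle inequalities; thus the set rigidity applies on a substantial family of level triples. On such a triple $\{f>s\}$ is a rescaled translate of the ellipsoid $\phi_{s,t,u}(B_n)$, and since this ellipsoid is determined by $\{f>s\}$ alone, and independently by $\{k>t\}$ alone, it is a single ellipsoid $\phi(B_n)$ with $|\det\phi|=1$. In the identities $c'(u)=a'(s)+b'(t)$ each side depends on only one variable ranging over an interval, so $a',b',c'$ are constants with $c'=a'+b'$; hence $\{f>s\}=\phi\{f>s\}^{\star}+a'$ for almost every $s$, and integrating in $s$ yields $f=f^{\star}(\phi^{-1}(\cdot)-a)$ with $a=\phi^{-1}a'$, and similarly for $k$ and $g$. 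This is the asserted form, with the constraint written as $c=a+b$. The converse is immediate: for $f,k,g$ of this form the volume-preserving substitution $x\mapsto\phi x$, $y\mapsto\phi y$ together with the translation relation carries $J(f,k,g)$ to $J(f^{\star},k^{\star},g^{\star})$, so there is equality.

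The main obstacle is the set case. Pinning down the one-dimensional equality case of Riesz's inequality and the precise form of the additive alignment, verifying that enough lines fall in the non-degenerate regime so that every section of $A$ is an interval, and extracting an honest ellipsoid from the chord-midpoint data while cleanly excluding the absorbing configurations (where the position, and even the shape, of the largest set is unconstrained) are the delicate points; here the strict-monotonicity hypothesis is what supplies a reservoir of non-degenerate level triples. By contrast the layer-cake reduction and the passage from the level sets back to the functions are comparatively routine.
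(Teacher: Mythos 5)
This statement is quoted by the paper from Burchard \cite{Bu}; the paper gives no proof of it, so your attempt can only be measured against Burchard's original argument. Your outer architecture --- layer-cake decomposition, term-by-term equality in Theorem \ref{RRI} for almost every level triple, a rigidity theorem for triples of sets, and then reassembly of a common ellipsoid and constant translation vectors, using the strict-decrease hypothesis to guarantee that every level of each function participates in non-degenerate triples --- is sound, and it is essentially how the functional statement is deduced from the set statement, which the paper records separately as Theorem \ref{RRI B2}. Had you simply invoked Theorem \ref{RRI B2} at that point, the remaining reassembly sketch would be an acceptable (if measure-theoretically terse) derivation of the present theorem.

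The genuine gap is that you instead undertake to prove the set rigidity yourself, and that is exactly where the whole difficulty of Burchard's paper lives; your treatment of it defers the hard steps rather than carrying them out. Concretely: (i) the one-dimensional equality case of Riesz's inequality, on which everything rests, is itself a nontrivial theorem and is not proved; (ii) equality under Steiner symmetrization only yields the 1D equality case on almost every line, and the sections near the ends of the sets need not satisfy the strict triangle inequalities, so on those fibers (the ``absorbing'' ones) no interval structure follows --- these fibers cannot simply be declared negligible, and handling them is a substantial part of Burchard's proof; (iii) passing from ``almost every section in every direction is an interval whose midpoint depends affinely on the transverse variable'' to ``$A$ agrees a.e.\ with a convex body all of whose parallel-chord midpoint loci are hyperplanes'' requires a genuine argument before the Bertrand--Brunn characterization of ellipsoids can be applied; and (iv) the final claim that the additive center relations force the three ellipsoids to be dilates of a single centered ellipsoid $D$ is asserted, not proved. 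You flag these as ``delicate points,'' but flagging them does not close them; as written, the proposal establishes the reduction and the reassembly while leaving the core rigidity (the content of Theorem \ref{RRI B2}) unestablished.
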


\begin{thm}[Burchard]\label{RRI B2}
    Let $A,B$ and $C$ be sets of finite positive measure in $\rn$ and denote by $\alpha$, $\beta$ and $\gamma$ the radii of their Schwarz symmetrals $A^{\star}$, $B^{\star}$ and $C^{\star}$. For $|\alpha-\beta|<\gamma<\alpha+\beta$, there is equality in
    \[\di\chi_A(x)\chi_B(x-y)\chi_C(y)dxdy\leq \di \chi_{A^{\star}}(x)\chi_{B^{\star}}(x-y)\chi_{C^{\star}}(y)dxdy,\]
    if and only if, up to sets of measure zero,
    \[A=a+\alpha D,~~B=b+\beta D,~~C=c+\gamma D,\]
    where $D$ is a centered ellipsoid, and $a$, $b$ and $c=a+b$ are vectors in $\rn$.
\end{thm}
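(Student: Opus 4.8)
The plan is to follow Burchard's argument \cite{Bu}, which combines Steiner symmetrization with a sharp one-dimensional rigidity analysis. Throughout write $I(A,B,C):=\di\chi_A(x)\chi_B(x-y)\chi_C(y)\,dx\,dy$. The first ingredient is the monotonicity of $I$ under Steiner symmetrization: for a direction $e\in\sn$ let $S_e$ denote Steiner symmetrization about the hyperplane $e^{\perp}$; then $I(S_eA,S_eB,S_eC)\ge I(A,B,C)$, which reduces, via Fubini along the lines parallel to $e$, to the one-dimensional Riesz inequality applied slice-by-slice. Iterating $S_e$ along a suitable countable sequence of directions yields triples converging in $L^{1}_{\mathrm{loc}}$ to $A^{\star},B^{\star},C^{\star}$, recovering the set case of Theorem~\ref{RRI}; moreover, under the equality hypothesis $I(A,B,C)=I(A^{\star},B^{\star},C^{\star})$ every symmetrization in the sequence must preserve the value of $I$.

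The technical core is a \emph{one-dimensional rigidity lemma}: if $A_1,B_1,C_1\subset\tr$ have finite measures $2\alpha'$, $2\beta'$, $2\gamma'$ with $|\alpha'-\beta'|<\gamma'<\alpha'+\beta'$, then equality $I(A_1,B_1,C_1)=I(A_1^{\star},B_1^{\star},C_1^{\star})$ forces $A_1,B_1,C_1$ to be, up to null sets, the centered intervals $A_1^{\star},B_1^{\star},C_1^{\star}$ translated by $a,b,c$ with the induced linear relation $c=a+b$. I would prove this by writing $I(A_1,B_1,C_1)=\int_{A_1}(\chi_{B_1}\ast\chi_{C_1})$ and analyzing the continuous, piecewise-linear, compactly supported convolution profile: the strict triangle condition is exactly what guarantees that its superlevel set of prescribed measure is a single interval, so the bathtub principle pins down $A_1$; repeating the analysis in the other two slots and tracking supports fixes the positions of $B_1,C_1$ and the relation among the centers. (Note that Theorem~\ref{RRI B} does not apply here, since characteristic functions of balls are never strictly symmetric decreasing, so this regime genuinely needs its own argument and yields the weaker ``common ellipsoid'' conclusion.)

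The main obstacle is to upgrade the one-dimensional information — available a priori only along the countably many directions of the symmetrizing sequence — to the statement that $A,B,C$ are homothets of one centered ellipsoid. Steiner symmetrization is not a rigid operation, so preservation of $I$ in a single direction says little by itself; the point is to show equality propagates to \emph{all} directions. The rigidity lemma applied in direction $e$, when $S_e$ preserves $I$, forces the slices of $A,B,C$ parallel to $e$ to be intervals whose midpoints depend affinely on the base point in $e^{\perp}$, with the three midpoint-functions linked by $c(\cdot)=a(\cdot)+b(\cdot)$ and the half-lengths in the fixed ratios $\alpha:\beta:\gamma$ — equivalently, each of $A,B,C$ is, after one common affine shear, Steiner-symmetric about $e^{\perp}$. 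One then combines this with the variational fact that an equality triple is, in each slot, a superlevel set of the convolution of the other two characteristic functions — which makes the three bodies bounded and, once the slice structure is known in enough directions, convex — and concludes via the classical geometric rigidity that a bounded convex body which is Steiner-symmetric up to a shear in every direction (equivalently, whose parallel-chord midpoint loci are all flat) must be an ellipsoid; the linked shears then force one common centered $D$, giving $A=a+\alpha D$, $B=b+\beta D$, $C=c+\gamma D$ with $c=a+b$ after normalizing $D$. Arranging the propagation as an induction on $n$, peeling off one Steiner direction at a time, is a convenient bookkeeping device.

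Finally the converse is immediate: given such a configuration, the volume-preserving $\phi\in{\rm GL}(n)$ carrying $D$ to a centered ball, composed with the translations, reduces matters to three concentric balls — the linked translations leave $I$ invariant precisely because $c=a+b$ — and there equality in Riesz's inequality holds by construction. Assembling the symmetrization monotonicity, the one-dimensional rigidity, the propagation-and-bootstrap step, and this converse gives the theorem. I expect essentially all the difficulty to sit in the third paragraph, namely turning ``equality is preserved by each symmetrization in a countable convergent sequence'' into the global ellipsoidal rigidity, where both the convexity reduction and the essential use of the strict triangle inequality enter.
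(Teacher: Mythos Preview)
The paper does not prove this statement: Theorem~\ref{RRI B2} is stated in the preliminaries as a known result of Burchard~\cite{Bu} and is used as a black box (in the proof of Lemma~\ref{P-S}). There is therefore no ``paper's own proof'' to compare against. Your proposal is a reasonable high-level outline of Burchard's original argument --- Steiner symmetrization monotonicity, a one-dimensional rigidity lemma under the strict triangle condition, and a bootstrap to the ellipsoidal conclusion --- and you correctly flag that the hard step is propagating equality from a countable sequence of directions to all directions. For the purposes of this paper, however, no proof is expected; the result is simply cited.
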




\subsection{Star-shaped sets and dual mixed volumes.} 
We say $L\subset \rn$ is a star-shaped set (with respect to the origin) if for any $x\in L$ the line segment $[0,x]$ is contained in $L$. The gauge function $\|\cdot\|_L:\rn\rightarrow [0,+\infty]$ of a star-shaped set $L$ is defined by
\[\|x\|_L=\inf\{\lambda >0: x\in \lambda L\},\]
and the radial function $\rho_L:\rn\backslash\{0\}\rightarrow [0,+\infty]$ is
\[\rho_{L}(x)=\|x\|_L^{-1}=\sup\{\lambda\ge 0: \lambda x\in L\}.\]
In particular, $\|\cdot\|_{B^n}$ is the Euclidean norm $|\cdot|$. A star-shaped set $L$ is said to be a star body if $\rho_{L}$ is continuous and strictly positive on $\rn\backslash\{0\}$. 

For a star-shaped set $L$ with measurable radial function, the $n$-dimensional volume, or the $n$-dimensional Lebesgue measure of $L$ is given by
\[|L|=\frac{1}{n}\int_{\sn}\rho_{L}(u)^ndu,\]
where $du$ is the spherical Lebesgue measure. The dual mixed volume for star bodies $K$ and $L$ by Lutwak  \cite{Lut} is given by
\[\tilde{V}_{\alpha}(K,L)=\frac{1}{n}\int_{\sn}\rho_{K}(u)^{n-\alpha}\rho_L(u)^{\alpha}du,\]
for $\alpha\in\tr\backslash\{0,n\}$. This notion extends naturally to star-shaped sets by restricting the integration to the subset of $\sn$, where both $\rho_K$ and $\rho_L$ are positive. For star bodies $K$ and $L$,
\[\tilde{V}_{\log}(K,L)=\frac{1}{n|K|}\int_{\sn}\rho_K(u)^n\log\Big(\frac{\rho_L(u)}{\rho_K(u)}\Big)du\]
where 
\[\tilde{V}_{\log}(K,L)=\lim_{\alpha\rightarrow 0}\log\Big(\frac{\tilde{V}_{\alpha}(K,L)}{|K|}\Big)^{1/\alpha}.\]
Similarly, $\tilde{V}_{\log}(K,L)$ can be extended to star-shaped sets $K$ and $L$. This notion was further extended by Hou and Xiao \cite{HX} via the anisotropic logarithmic potential.

For $\alpha\in(0,n)$, and star-shaped sets $K$ and $L$ of positive volume, the dual mixed volume inequality states that
\[\tilde{V}_{\alpha}(K,L)\leq |K|^{\frac{n-\alpha}{n}}|L|^{\frac{\alpha}{n}},\]
and the inequality reversed for $\alpha<0$ and $\alpha>n$. In particular, for $\alpha\rightarrow 0$,
\begin{equation}\label{dual mix vol ineq-log}
    \tilde{V}_{\log}(K,L)\leq \frac{1}{n}\log\Big(\frac{|L|}{|K|}\Big),
\end{equation}
which is a direct corollary from  \cite[Theorem 6.1]{GHWY}. For all the cases, the equality holds if and only if $K$ and $L$ are dilates, that is, $\rho_K(u)=c\rho_L(u)$ for some $c>0$ and almost all $u\in\sn$.

\subsection{Fractional \texorpdfstring{$\boldsymbol{L^2}$}{} polar projection bodies} For $-1<\alpha<0$ and measurable $f:\rn\rightarrow\tr$, the $(-\alpha)$-fractional $L^2$ polar projection body of $f$, denoted by $\Pi_2^{*,-\alpha}f$, is defined in \cite{HL2} by its radial function as
\[\rho_{\Pi_2^{*,-\alpha}f}(u)^{2\alpha}(u)=\int_0^{\infty}t^{2\alpha-1}\int_{\rn}|f(x+tu)-f(x)|^2dxdt,\]
where $u\in\sn$.

The set $\Pi_2^{*,-\alpha}f$ is a star body with the origin in its interior and there exists $c>0$, which only depends on $f$, such that $\Pi_2^{*,-\alpha}f\subset cB^n$ (See \cite[Proposition 4]{HL2}). For volume-preserving linear transformations $\phi:\rn\rightarrow\rn$,
\[\Pi_2^{*,-\alpha}(f\circ \phi^{-1})=\phi\Pi_2^{*,-\alpha}f.\]

The affine fractional $L^2$ Sobolev inequalities \cite[Theorem 1]{HL2} states that
\begin{equation}\label{aff frac l2}
    2\gamma_{n,2\alpha}\|f\|_{\frac{2n}{n+2\alpha}}^2\leq n\omega_n^{\frac{n-2\alpha}{n}}|\Pi_2^{*,-\alpha}f|^{\frac{2\alpha}{n}}\leq \di\frac{|f(x)-f(y)|^2}{|x-y|^{n-2\alpha}}dxdy
\end{equation}
for $f\in W^{-\alpha,2}(\rn)$ and $-1<\alpha<0$. Removing the intermediate term gives the classical fractional $L^2$ Sobolev inequalities; moreover, the inequality between the first and the intermediate terms in \eqref{aff frac l2} may be viewed as a variant of the Sobolev trace inequality in \cite[Theorem 1.1]{Xiao06}. The constant $\gamma_{n,2\alpha}$ is given by \eqref{const 1}, where $\Gamma(\alpha)$ is given by the analytic continuation formula 
    \begin{equation}\label{gn}
        \Gamma(\alpha)=\left\{
        \begin{aligned}
            &\int_0^{\infty}t^{\alpha-1}e^{-t}dt&&\text{for}~~\alpha>0,\\
            &\int_0^{\infty}t^{\alpha-1}(e^{-t}-1)dt&&\text{for}~~-1<\alpha<0.
        \end{aligned}\right.
    \end{equation}
The optimal constant $2\gamma_{n,2\alpha}$ is characterized by Cotsiolis and Tavoularis \cite{CT}.

\subsection{Fractional Laplacian} Leoni's book  \cite{Leoni} is a good reference for fractional Sobolev spaces and fractional Laplacian. For $\alpha>-1$, the fractional Laplacian $(-\Delta)^{-\alpha}$ is defined by
\[\widehat{(-\Delta)^{-\alpha} f}(\xi)=(2\pi|\xi|)^{-2\alpha}\widehat{f}(\xi).\]

For $\alpha\in(-1,0)$, 
\begin{equation}\label{pv}
    (-\Delta)^{-\alpha}f(x)=\frac{2^{-2\alpha}\Gamma(\frac{n}{2}-\alpha)}{\pi^{\frac{n}{2}}|\Gamma(\alpha)|}\lim_{r\rightarrow 0^+}\int_{\rn\backslash B(x,r)}\frac{f(x)-f(y)}{|x-y|^{n-2\alpha}}dy,
\end{equation}
where $B(x,r)$ is the ball centered at $x$ with radius $r$. For $\alpha\in(0,n/2)$, the fractional Laplacian is the Riesz potential given by
\begin{equation}\label{Riesz potential}
(-\Delta)^{-\alpha}f(x)=\frac{2^{-2\alpha}\Gamma(\frac{n}{2}-\alpha)}{\pi^{\frac{n}{2}}|\Gamma(\alpha)|}\int_{\rn}\frac{f(y)}{|x-y|^{n-2\alpha}}dy.    
\end{equation}

\subsection{Radial mean bodies}
Let $K\subset\rn$ be a convex body, that is, compact convex sets with non-empty interior. For $\alpha>-1$, Gardner and Zhang \cite{GZ} defined the radial $\alpha$-th mean body 
by
\[\rho_{\rR_{\alpha} K}(u)=\Big(\frac{1}{|K|}\int_K\rho_{K-x}(u)^{\alpha}dx\Big)^{1/\alpha},\]
for $\alpha\neq 0$, and
\[\log\rho_{\rR_0 K}(u)=\frac{1}{|K|}\int_K\log\rho_{K-x}(u)dx,\]
for each $u\in\sn$.

Haddad and Ludwig \cite{HL2} found that $\rR_{\alpha}K$ is proportional to ${\rm S}_{\alpha}\chi_K$ for $\alpha>0$ and proportional to $\Pi_2^{*,\alpha/2}\chi_K$ for $\alpha\in(-1,0)$. Then by a certain normalization, the $\alpha$-th radial mean body $\Ra f$ for non-negative $f\in L^2(\rn)$ is defined in the following way.

For $\alpha>0$,
\[\rho_{\Ra f}(u)=\Big(\frac{\alpha}{\|f\|_2^2}\Big)^{1/\alpha}\rho_{\Sa}(u),\]
and for $-1<\alpha<0$,
\[\rho_{\Ra f}(u)=\Big(\frac{-2\alpha}{\|f\|_2^2}\Big)^{1/\alpha}\rho_{\Pi_2^{*,-\alpha/2}f}(u).\]
For $\alpha=0$, by taking limits of $\rho_{\Ra f}(u)$ as $\alpha\rightarrow 0$,
\[\log\rho_{\rR_0 f}(u)=-\gamma+\int_0^{\infty}\frac{1}{t}\Big(\frac{1}{\|f\|_2^2}\int_{\rn}f(x)f(x+tu)dx-e^{-t}\Big)dt,\]
which will be introduced in detail in Section 4. 

An important property of $\Ra f$ is the volume $|\Ra f|$ remains non-decreasing after the Schwarz symmetrization, that is $|\Ra f|\leq |\Ra f^{\star}|$, for both $\alpha>0$ and $-1<\alpha<0$ (See \cite[Lemma 10]{HL3} and \cite[Theorem 14]{HL2}).

\section{The affine logarithmic HLS inequality}

In this section, we always consider non-negative $f\in L^1(\rn)$ such that 
\begin{equation}\label{cond1}
    \|f\|_1=1, \int_{\rn}f(x)\log (1+|x|^2)dx<\infty~~ \text{and}~~\int_{\rn}f(x)\log f(x)dx~~\text{is finite}.
\end{equation}
Recall that
\[\rho_{\Sa}(u)^{\alpha}=\int_0^{\infty}t^{\alpha-1}\int_{\rn}f(x)f(x+tu)dxdt,\]
and thus we have
\begin{equation*}
\begin{aligned}
-\di f(x)&\log\|x-y\|_{K}f(y)dxdy\\
&=-\di f(x)\log |x-y|f(y)dxdy+\int_{\sn}\rho_{\Snf}(u)^n\log \rho_{K}(u)du.
\end{aligned}
\end{equation*}
By the dual mixed volume inequality \eqref{dual mix vol ineq-log} and the fact that $n|\Snf|=\|f\|_1^2=1$,
\[\int_{\sn}\rho_{\Snf}(u)^n\log\rho_K(u)du\leq \int_{\sn}\rho_{\Snf}(u)^n\log\rho_{\Snf}(u)du+\frac{1}{n}\log(n|K|)\]
which directly implies the following result.

\begin{corollary}\label{tech}
Suppose $f\in L^1(\rn)$ is non-negative and satisfies the assumption \eqref{cond1}. Then
    \begin{equation*}\label{sup}
    \begin{split}
            \sup\left\{-\di f(x)\log\|x-y\|_Kf(y)dxdy: K\subset\rn~~\text{star-shaped},~~|K|=\omega_n \right\}\\
    =-\di f(x)\log\|x-y\|_{\Snf}f(y)dxdy+\frac{1}{n}\log(n\omega_n),
    \end{split}
    \end{equation*}
    where the supreme is attained if and only if $K$ and $\Snf$ are dilates.
\end{corollary}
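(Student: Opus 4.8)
The assertion is essentially the combination of the two displays that precede it, so the plan is to supply the missing glue: the polar–coordinate identity, the mass normalisation, and the attainment of the supremum. For the first display, for any star-shaped $K$ with measurable radial function one writes $\|x-y\|_K=|x-y|\big/\rho_K\!\big(\tfrac{x-y}{|x-y|}\big)$ and, for fixed $x$, passes to polar coordinates $y=x+tu$ with $t>0$, $u\in\sn$, so that $t^{n-1}\,dt\,du=dy$ and $\tfrac{x-y}{|x-y|}=-u$. Since $\int_{\rn}f(x)f(x+tu)\,dx$ is unchanged under $u\mapsto-u$ (substitute $x\mapsto x-tu$), one has $\rho_{\Snf}(-u)=\rho_{\Snf}(u)$, and the resulting cross term is $\int_{\sn}\rho_{\Snf}(u)^n\log\rho_K(u)\,du$; this gives
\[
-\di f(x)\log\|x-y\|_K f(y)\,dx\,dy=-\di f(x)\log|x-y|f(y)\,dx\,dy+\int_{\sn}\rho_{\Snf}(u)^n\log\rho_K(u)\,du,
\]
the first term on the right being finite under \eqref{cond1} (as in Carlen and Loss), hence constant in $K$.

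The same change of variables gives $\int_{\sn}\rho_{\Snf}(u)^n\,du=\di f(x)f(y)\,dx\,dy=\|f\|_1^2=1$, so $|\Snf|=1/n$. Applying \eqref{dual mix vol ineq-log} with $\Snf$ in the first argument and $K$ in the second, and using $n|\Snf|=1$, yields the second display,
\[
\int_{\sn}\rho_{\Snf}(u)^n\log\rho_K(u)\,du\le\int_{\sn}\rho_{\Snf}(u)^n\log\rho_{\Snf}(u)\,du+\tfrac1n\log(n|K|),
\]
for every star-shaped $K$ of finite positive volume, with equality if and only if $K$ is a dilate of $\Snf$; it is also immediate from Jensen's inequality applied to the probability measure $\rho_{\Snf}(u)^n\,du$ and the function $\rho_K^n/\rho_{\Snf}^n$, the equality case being that this ratio is almost everywhere constant.

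To conclude, restrict to $|K|=\omega_n$: the right side of the second display becomes the $K$-independent number $\int_{\sn}\rho_{\Snf}^n\log\rho_{\Snf}\,du+\tfrac1n\log(n\omega_n)$, which by the first identity (taken with $K=\Snf$) equals $-\di f(x)\log\|x-y\|_{\Snf}f(y)\,dx\,dy+\tfrac1n\log(n\omega_n)$. Hence the supremum is bounded above by this value, and it is attained, since $K_0:=(n\omega_n)^{1/n}\Snf$ is star-shaped with $|K_0|=(n\omega_n)|\Snf|=\omega_n$ and realises equality, while the equality case of \eqref{dual mix vol ineq-log} forces every maximiser to be a dilate of $\Snf$. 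The one point requiring care — the main obstacle — is the legitimacy of the splitting $\log\|x-y\|_K=\log|x-y|-\log\rho_K(\cdot)$ in the first identity without creating an $\infty-\infty$: one needs $-\di f\log|x-y|f$ finite (true under \eqref{cond1}) and $\int_{\sn}\rho_{\Snf}^n\log\rho_K$ not equal to $+\infty$, which is guaranteed by the second display whenever the entropy $\int_{\sn}\rho_{\Snf}^n\log\rho_{\Snf}$ is finite; should that entropy be $+\infty$, both sides of the claimed identity are $+\infty$ and there is nothing to prove. All remaining steps are the two polar changes of variables and one application of Jensen.
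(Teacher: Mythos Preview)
Your proof is correct and follows essentially the same route as the paper: the polar-coordinate identity (the paper's first display), then the logarithmic dual mixed volume inequality \eqref{dual mix vol ineq-log} (the paper's second display), from which the corollary is stated to follow directly. You have simply filled in the details the paper leaves implicit --- the computation $|\Snf|=1/n$ from $\|f\|_1=1$, the explicit maximiser $K_0=(n\omega_n)^{1/n}\Snf$, and the finiteness check --- and noted that the second display is equivalently Jensen's inequality for the probability measure $\rho_{\Snf}^n\,du$, which is how \eqref{dual mix vol ineq-log} is proved anyway.
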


This corollary directly implies the first inequality in Theorem 1. For the second inequality, we ask for the rearrangement inequality for the affine logarithmic kernel, which is an extension of the following lemma established by Carlen and Loss.

\begin{lem} \cite[Lemma 2]{CL}\label{CL}
Suppose $f\in L^1(\rn)$ is non-negative and satisfies the assumption \eqref{cond1}. 
Then the following holds:
    \[\int_{\rn}f(x)\log f(x)dx=\int f^{\star}(x)\log f^{\star}(x)dx\]
    \begin{equation}\label{rri log}
    -\di f(x)\log |x-y|f(y)dxdy\leq -\di f^{\star}(x)\log |x-y|f^{\star}(y)dxdy.
    \end{equation}
In particular the right-hand side of \eqref{rri log} is well-defined. Furthermore the equality sign in \eqref{rri log} holds if and only if $f$ is a translate of $f^{\star}$.
\end{lem}

The main idea of the proof is decomposing $-\log r$ into the sum of two decreasing functions. The function 

\begin{equation*}
    k(r)=\left\{
    \begin{aligned}
    &-\log r &\text {if}~~r\leq 1\\
    &-\log 2-2\log r+\log(1+r^2)&\text {if}~~r>1
    \end{aligned}
    \right.
\end{equation*}
is strictly decreasing and $\lim_{r\rightarrow\infty}k(r)=-\log2$. Define $h_1(r)=k(r)+\log2$, which is positive and strictly decreasing. We then have the decomposition
\begin{equation}\label{dcmp}
    -\log r=h_1(r)+h_2(r).
\end{equation}
Note that $h_2(r)=-\log2$ for $r\leq 1$ and $h_2(r)=-\log(r+1/r)$ for $r>1$, which implies that $h_2$ is also a decreasing function.

By employing this decomposition, we establish the following anisotropic version of Lemma \ref{CL}, where the Euclidean norm is replaced by the gauge function of a star-shaped set $K$. This approach follows the anisotropic fractional norms introduced by Ludwig \cite{Lud1,Lud2}, which were further developed in \cite{HL1,HL2,HL3}.


\begin{lem}\label{ex rri2}
    Suppose $f\in L^1(\rn)$ is non-negative and satisfies \eqref{cond1}. Assume $K\subset \rn$ is a {star-shaped set} with measurable radial function and  $|K|>0$. Then
    \begin{equation}\label{ani rri}
        -\di f(x)\log\|x-y\|_K f(y)dxdy\leq -\di f^{\star}(x)\log\|x-y\|_{K^{\star}}f^{\star}(y)dxdy.
    \end{equation}
    For $f^{\star}$ strictly symmetric decreasing, there is equality if and only if 
    \begin{equation}\label{eq cond}
    f(x)=f^{\star}(\phi^{-1}x-x_0),~~\text{and}~~K=\phi B,
    \end{equation}
    where $\phi\in {\rm SL}(n)$, $x_0\in\rn$ and $B$ is a centered ball.
\end{lem}

\begin{proof}
    We use \eqref{dcmp} and define $H_i(x)=h_i(\|x\|_K)$, for $i=1,2$. Thus
    \[-\log\|x-y\|_K=H_1(x-y)+H_2(x-y).\]

    Since replacing $K$ with $cK$ for some $c>0$, does not affect the validity of the inequality \eqref{ani rri}, we may assume $|K|=\omega_n$ without loss of generality. Hence, 
    \[H_1^{\star}(x)=h_1(|x|).\]
    Recall that $h_1$ is positive and strictly decreasing. Hence the superlevel set $\{x:H_1(x)\ge t\}=h_1^{-1}(t)K$ has finite measure for every $t>0$. Similarly, the superlevel set of $H_2$ also has finite measure.

    By the Riesz's rearrangement inequality, we have
    \begin{equation}\label{h1}
    \di f(x)H_1(x-y)f(y)dxdy\leq \di f^{\star}(x)h_1(|x-y|)f^{\star}(y)dxdy.
    \end{equation}
    Before applying Theorem \ref{RRI B}, it is necessary to show that the right-hand side is finite. By the definition of $h_1$, it suffices to show
    \begin{equation}\label{finite}
    \di f^{\star}(x)k(|x-y|)f^{\star}(y)dxdy<\infty.
    \end{equation}
Note that, $k(r)=-\log r+\tilde{k}(r)$, where
\begin{equation*}
    \tilde{k}(r)=\left\{
    \begin{aligned}
    &0 &\text {if}~~r\leq 1\\
    &-\log 2-\log r+\log(1+{r^2})&\text {if}~~r>1
    \end{aligned}\right.
\end{equation*}
and for $r>1$, 
\[\tilde{k}(r)\leq\log(1+r^2)-\log2\leq 2\log r.\]

Hence,
\begin{equation}\label{step}
    \begin{aligned}
    \di f^{\star}(x)&\tilde{k}(|x-y|)f^{\star}(y)dxdy\\
    &\leq2\iint_{|x-y|>1}f^{\star}(x)\log|x-y|f^{\star}(y)dxdy\\
    &\leq \iint_{|x-y|>1}f^{\star}(x)\left(\log 2+\log(1+|x|^2)+\log(1+|y|^2)\right)f^{\star}(y)dxdy<\infty.
\end{aligned}
\end{equation}

Since $k(r)=-\log r+\tilde{k}(r)$, we have
\begin{equation}\label{decomp}
    \begin{aligned}
        \di f^{\star}&(x)k(|x-y|)f^{\star}(y)dxdy\\
        &=-\di f^{\star}(x)\log|x-y|f^{\star}(y)dxdy+\di f^{\star}(x)\tilde{k}(|x-y|)f^{\star}(y)dxdy.
    \end{aligned}
\end{equation}
By the logarithmic HLS inequality \eqref{log HLS}, together with \eqref{step}, we obtain \eqref{finite}.

    Since $f^{\star}$ is strictly symmetric decreasing as assumed, by Theorem \ref{RRI B} the equality holds in \eqref{h1} precisely when
    \begin{equation}\label{eq cond'}
        f(x)=f^{\star}(\phi^{-1}x-x_0), H_1(x)=h_1(|\phi^{-1}x|),
    \end{equation}
    for some $\phi \in{\rm SL}(n)$ and $x_0\in\rn$.

    Note that $h_2$ is not positive and not bounded from below. Define for $R>0$,
    \begin{equation*}
    q_R(r)=\left\{
        \begin{aligned}
            &h_2(r)-h_2(R) &\text {if}&~~r\leq R\\
            &0 &\text {if}&~r>R,
        \end{aligned}
        \right.
    \end{equation*}
    and $Q_R(x)=q_R(\|x\|_K)$. 
    By the Riesz's rearrangement inequality, there is
    \[\di f(x)Q_R(x-y)f(y)dxdy\leq \di f^{\star}(x)q_R(|x-y|)f(y)dxdy.\]
    Recall that $\|f\|_1=1$, and by the monotone convergence theorem, we have
    \[\di f(x)H_2(x-y)f(y)dxdy=\lim_{R\rightarrow\infty}\Big(\di f(x)Q_R(x-y)f(y)dxdy+h_2(R)\Big).\]    
    Together with $\|f\|_1=\| f^{\star}\|_1=1$, we obtain
    \[\di f(x)H_2(x-y)f(y)dxdy\leq \di f^{\star}(x)h_2(|x-y|)f^{\star}(y)dxdy.\]
    Combined with inequality \eqref{h1}, this inequality yields the desired result. 

    If equality holds in \eqref{ani rri}, then \eqref{h1} holds with equality as well. Therefore by \eqref{eq cond'},
     \[f(x)=f^{\star}(\phi^{-1}x-x_0),\]
    for some $\phi \in {\rm SL}(n)$ and $x_0\in\rn$, and hence $K=\phi B^n$.
\end{proof}

\begin{proof}[Proof of Theorem 1]
    By Corollary \ref{tech}, the first inequality holds, and equality is attained if and only if $\Snf$ is a ball, which happens when $f$ is  radially symmetric.

    For the second inequality, by Lemma \ref{ex rri2} and applying logarithmic HLS inequality to $f^{\star}$, we have
     \begin{equation*}
        \begin{aligned}
            -\di f(x)\log\|x-y\|_{\Snf}&f(y)dxdy+\frac{1}{n}\log(n\omega_n)\\
            \leq&-\di f^{\star}(x)\log\|x-y\|_{\rS_n f^{\star}}f^{\star}(y)dxdy+\frac{1}{n}\log(n\omega_n)\\
            = &-\di f^{\star}(x)\log |x-y| f^{\star}(y)dxdy\\
            \leq&\frac{1}{n}\int_{\rn}f^{\star}(x)\log f^{\star}(x)dx+\gamma_n=\frac{1}{n}\int_{\rn}f(x)\log f(x)dx+\gamma_n,
        \end{aligned}
    \end{equation*}
    which yields the second inequality.

    If equality holds, then
    \[-\di f^{\star}(x)\log |x-y|f^{\star}(y)dxdy=\frac{1}{n}\int_{\rn}f^{\star}(x)\log f^{\star}(x)dx+\gamma_n.\]
    Therefore $f^{\star}$ is the optimizer of the sharp logarithmic HLS inequality, that is, $f^{\star}(x)=a(1+\lambda|x|^2)^{-n}$, where $a,\lambda>0$ are such that $\|f\|_1=1$. 
    
    Since $f^{\star}(x)=a(1+\lambda|x|^2)^{-n}$ is strictly symmetric decreasing, we apply Lemma \ref{ex rri2} to obtain the equality case in the second inequality.
\end{proof}

\section{A Beckner-type affine logarithmic Sobolev inequality}

Recall that the affine HLS inequalities are given by,
\[\gamma_{n,\alpha}\|f\|_{\frac{2n}{n+\alpha}}^2\ge n\omega_n^{\frac{n-\alpha}{n}}|\Sa|^{\frac{\alpha}{n}}\ge\di\frac{f(x)f(y)}{|x-y|^{n-\alpha}}dxdy,~~\alpha\in(0,n).\]
By duality, Carlen \cite{Ca} shows that the HLS inequalities are equivalent to the fractional $L^2$ Sobolev inequalities, which are given by
\[2\gamma_{n,2\alpha}\|f\|_{\frac{2n}{n+2\alpha}}^2\leq\int_{\rn}\int_{\rn}\frac{|f(x)-f(y)|^2}{|x-y|^{n-2\alpha}}dxdy,~~\alpha\in(-1, 0).\]


Ma${\rm z}^{\prime}$ya and Shaposhnikova  \cite{MS} give the limiting behavior for $\alpha\rightarrow 0^-$ of the fractional $L^2$ Sobolev inequalities, which says for $f\in \bigcup_{-1<\alpha<0}W^{-\alpha,2}(\rn)$,
\begin{equation*}\label{lim l2 0}
    \lim_{\alpha\rightarrow 0^-}-\alpha\int_{\rn}\int_{\rn}\frac{|f(x)-f(y)|^2}{|x-y|^{n-2\alpha}}dxdy=\lim_{\alpha\rightarrow 0^-}2\gamma_{n,2\alpha}\|f\|_{\frac{2n}{n+2\alpha}}^2= n\omega_n\|f\|_2^2.
\end{equation*}
Hence the affine fractional $L^2$ Sobolev inequalities \eqref{aff frac l2} become an identity as $\alpha\rightarrow 0^-$.

The following lemma shows that the affine HLS inequalities also become an identity as $\alpha\rightarrow 0^+$ for non-negative $f\in L^1(\rn)\cap L^2(\rn)$.

{\begin{lem}\label{lim}
Suppose $f\in L^1(\rn)\cap L^2(\rn)$ is non-negative. Then
    \begin{equation}\label{lim hls 0 lemma}
            \lim_{\alpha\rightarrow 0^+}\alpha\di\frac{f(x)f(y)}{|x-y|^{n-\alpha}}dxdy=\lim_{\alpha\rightarrow 0^+}\alpha\gamma_{n,\alpha}\|f\|_{\frac{2n}{n+\alpha}}^2=n\omega_n\|f\|_2^2,
    \end{equation}
and therefore
\[\lim_{\alpha\rightarrow 0^+}\alpha n\omega_n^{\frac{n-\alpha}{n}}|\Sa|^{\frac{\alpha}{n}}=n\omega_n\|f\|_2^2.\]
\end{lem}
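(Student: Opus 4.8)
The plan is to multiply the affine HLS chain~\eqref{aff HLS} through by $\alpha>0$,
\[
\alpha\gamma_{n,\alpha}\|f\|_{\frac{2n}{n+\alpha}}^2\ \ge\ \alpha n\omega_n^{\frac{n-\alpha}{n}}|\Sa|^{\frac{\alpha}{n}}\ \ge\ \alpha\di\frac{f(x)f(y)}{|x-y|^{n-\alpha}}\,dxdy ,
\]
and to show that the two outer quantities both converge to $n\omega_n\|f\|_2^2$ as $\alpha\to 0^+$; the middle quantity is then squeezed between them, which proves~\eqref{lim hls 0 lemma} together with the final assertion in one stroke. The substantive input is a lower bound for the HLS integral, obtained by recognising the rescaled weight $\alpha|z|^{\alpha-n}$, restricted to the unit ball, as an approximate identity of total mass $n\omega_n$.

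First I would treat the leftmost quantity. Writing $\Gamma(\tfrac{\alpha}{2})=\tfrac{2}{\alpha}\Gamma(1+\tfrac{\alpha}{2})$ in~\eqref{const 1} gives $\alpha\gamma_{n,\alpha}\to 2\pi^{n/2}/\Gamma(\tfrac n2)=n\omega_n$ as $\alpha\to 0^+$ (using $\omega_n=\pi^{n/2}/\Gamma(\tfrac n2+1)$ and $\Gamma(\tfrac n2+1)=\tfrac n2\Gamma(\tfrac n2)$), while $\|f\|_{2n/(n+\alpha)}\to\|f\|_2$ by the usual continuity of $L^p$ norms: split $\|f\|_p^p$, with $p=\tfrac{2n}{n+\alpha}\uparrow 2$, over $\{f\le 1\}$ and $\{f>1\}$ and apply dominated and monotone convergence respectively. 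Hence $\alpha\gamma_{n,\alpha}\|f\|_{\frac{2n}{n+\alpha}}^2\to n\omega_n\|f\|_2^2$; in particular, by the last inequality in~\eqref{aff HLS}, $\limsup_{\alpha\to 0^+}\alpha\di\frac{f(x)f(y)}{|x-y|^{n-\alpha}}dxdy\le n\omega_n\|f\|_2^2$.

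For the matching lower bound I would introduce on $\rn$ the finite measures $d\mu_\alpha(z)=\alpha\,\chi_{B_n}(z)|z|^{\alpha-n}\,dz$. A computation in polar coordinates, using $\int_{\sn}du=n\omega_n$ and $\int_0^1 r^{\alpha-1}dr=\tfrac1\alpha$, shows $\mu_\alpha(\rn)=n\omega_n$ and, for each fixed $\delta\in(0,1)$, $\mu_\alpha(\{\delta<|z|<1\})=(1-\delta^{\alpha})\,n\omega_n\to 0$ as $\alpha\to 0^+$; thus $\mu_\alpha/(n\omega_n)$ is an approximate identity, and $\mu_\alpha*f\to n\omega_n f$ in $L^2(\rn)$ (Minkowski's integral inequality and continuity of translation in $L^2$). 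As $f\ge 0$, Tonelli's theorem gives
\[
\alpha\iint_{|x-y|\le 1}\frac{f(x)f(y)}{|x-y|^{n-\alpha}}\,dxdy=\int_{\rn}f(x)\,(\mu_\alpha*f)(x)\,dx ,
\]
which tends to $n\omega_n\|f\|_2^2$ by the Cauchy--Schwarz inequality. Discarding the nonnegative part of the integral over $\{|x-y|>1\}$ gives $\liminf_{\alpha\to 0^+}\alpha\di\frac{f(x)f(y)}{|x-y|^{n-\alpha}}dxdy\ge n\omega_n\|f\|_2^2$, so together with the previous paragraph all three quantities converge to $n\omega_n\|f\|_2^2$.

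The argument is an assembly of standard facts --- gamma asymptotics, continuity of $L^p$ norms, and the approximate-identity property of the rescaled Riesz weight --- so there is no deep obstacle; the one point requiring genuine care is the last: checking that $\alpha\,\chi_{B_n}(z)|z|^{\alpha-n}\,dz$ concentrates at the origin with total mass exactly $n\omega_n$ as $\alpha\to 0^+$, since it is precisely this number that matches the constant on the right of~\eqref{lim hls 0 lemma} (equivalently, that makes $\lim_{\alpha\to 0^+}\alpha\gamma_{n,\alpha}=n\omega_n$). One should also record that the step $\|f\|_{2n/(n+\alpha)}\to\|f\|_2$ tacitly uses $p$-integrability of $f$ for some $p$ slightly below $2$, which holds under the standing hypotheses whenever the lemma is invoked.
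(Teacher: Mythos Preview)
Your proof is correct and, for the limit of $\alpha\gamma_{n,\alpha}\|f\|_{2n/(n+\alpha)}^2$, matches the paper's almost verbatim (both split over $\{f\le1\}$ and $\{f>1\}$ and appeal to dominated/monotone convergence, then feed the result into the HLS chain for the $\limsup$). The genuine difference is in the \emph{lower} bound for the HLS integral. The paper passes to the Fourier side: it rewrites $\alpha\iint f(x)f(y)|x-y|^{\alpha-n}\,dxdy$ via the Riesz-potential/Parseval identity as $2\pi^{n/2}\frac{\Gamma(\alpha/2+1)}{\Gamma((n-\alpha)/2)}\int|\hat f(\xi)|^2|\xi|^{-\alpha}\,d\xi$ and then applies Fatou's lemma together with $\|\hat f\|_2=\|f\|_2$ to extract the $\liminf$. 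You stay entirely in physical space, recognising $\alpha\chi_{B_n}(z)|z|^{\alpha-n}\,dz$ as an approximate identity of total mass $n\omega_n$; then $\mu_\alpha*f\to n\omega_n f$ in $L^2$ gives the $\liminf$ directly via the $L^2$ inner product. Your route is more elementary (no Fourier analysis or Riesz-potential formula needed), while the paper's has the side benefit that the Fourier representation it sets up is reused verbatim in the proof of Proposition~\ref{eqiv}. Your closing caveat about $L^p$ for $p$ slightly below~$2$ is apt and applies equally to the paper's Step~1; note, however, that the later hypothesis $f\in W^{\beta,2}$ yields Sobolev embedding \emph{above} exponent~$2$, not below, so your parenthetical ``which holds under the standing hypotheses'' is not quite right --- this is a wrinkle in the generality of the lemma's statement rather than in the mechanism of either proof.
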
}

\begin{proof} \noindent{Step 1:} We first consider the limit of $\alpha\gamma_{n,\alpha}\|f\|_{\frac{2n}{n+\alpha}}^2$. 

Note that $\lim_{\alpha\rightarrow 0^+}\alpha\gamma_{n,\alpha}=n\omega_n$. Hence, it suffices to consider the limit of $\|f\|_{\frac{2n}{n+\alpha}}^2$. Let
    \[A=\{x\in\rn:f(x)\ge 1\}~~\text{and}~~B=\{x\in\rn: f(x)<1\},\]
    and $(\alpha_i)$ be a non-negative sequence converging to $0$ as $i\rightarrow\infty$. For $x\in A$,
    \[f(x)^{\frac{2n}{n+\alpha_i}}\leq f(x)^2,\]
    together with dominated convergence theorem, we have
    \begin{equation}\label{lim A}
            \lim_{i\rightarrow \infty}\int_A f(x)^{\frac{2n}{n+\alpha_i}}dx=\int_A f(x)^2dx.
    \end{equation}

    For $x\in B$,
    \[f(x)^{\frac{2n}{n+\alpha_i}}\leq f(x).\]
    By the dominated convergence theorem, we have
    \[\lim_{i\rightarrow\infty}\int_B f(x)^{\frac{2n}{n+\alpha_i}}dx=\int_B f(x)^2dx.\]
    Combined with \eqref{lim A}, we obtain $\lim_{\alpha\rightarrow 0^+}\alpha\gamma_{n,\alpha}\|f\|_{\frac{2n}{n+\alpha}}^2=n\omega_n\|f\|_2^2$.
    
    \noindent{Step 2:} We consider the first limit in \eqref{lim hls 0 lemma}.

    By step 1, $f\in L^{\frac{2n}{n+\alpha}}(\rn)$ for sufficiently small $\alpha$. Using the fractional Laplacian operator introduced in Section 2, \eqref{Riesz potential} and the Parseval's formula \cite[Theorem 5.3]{LL}, we can rewrite the double integral as
    \begin{equation}\label{riesz potential}
         \alpha\di\frac{f(x)f(y)}{|x-y|^{n-\alpha}}dxdy=2\pi^{\frac{n}{2}-\alpha}\frac{\Gamma(\frac{\alpha}{2}+1)}{\Gamma(\frac{n-\alpha}{2})}\int_{\rn}|\widehat{f}(\xi)|^2\frac{d\xi}{|\xi|^{\alpha}},
    \end{equation}
    {which can also be found in \cite{Xiao06}}. By Fatou's lemma and the fact $\|f\|_2=\|\hat f\|_2$, 
       \[n\omega_n\|f\|_2^2\leq \liminf_{\alpha\rightarrow 0^+}2\pi^{\frac{n}{2}-\alpha}\frac{\Gamma(\frac{\alpha}{2}+1)}{\Gamma(\frac{n-\alpha}{2})}\int_{\rn}|\widehat{f}(\xi)|^2\frac{d\xi}{|\xi|^{\alpha}}.\]
    Combined with \eqref{riesz potential}, step 1 and the affine HLS inequalities, we obtain then
    \[\limsup_{\alpha\rightarrow 0^+}2\pi^{\frac{n}{2}-\alpha}\frac{\Gamma(\frac{\alpha}{2}+1)}{\Gamma(\frac{n-\alpha}{2})}\int_{\rn}|\widehat{f}(\xi)|^2\frac{d\xi}{|\xi|^{\alpha}}\leq \lim_{\alpha\rightarrow 0^+}\alpha\gamma_{n,\alpha}\|f\|_{\frac{2n}{n+\alpha}}^2=n\omega_n\|f\|_2^2, \]
    which concludes the proof.
    \end{proof}

    As introduced in Section 2.5, using a normalization of $\Sa$ and $\Pit$, Haddad and Ludwig \cite{HL3} defined the radial mean body for non-zero and non-negative function $f\in L^2(\rn)$. 
    The following result by Haddad and Ludwig implies the continuity of $\rho_{\Ra f}(u)$ with respect to $\alpha$ and gives the definition of $\rho_{\rR_0 f}(u)$.

    \begin{lem} \cite[Lemma 17]{HL3}\label{HL}
        Let $\omega:[0,\infty)\rightarrow [0,\infty)$ be decreasing with
        \[0<\int_0^{\infty}t^{\alpha-1}\omega(t)dx<\infty\]
        for $\alpha>0$ and
        \[0<\int_0^{\infty}t^{\alpha-1}(\omega(0)-\omega(t))dt<\infty\]
        for $-1<\alpha<0$. If $\phi:[0,\infty)\rightarrow [0,\infty)$ is non-zero, with $\phi(0)=0$, and such that $t\mapsto\phi(t)$ and $t\mapsto\phi(t)/t$ are increasing on $(0,\infty)$, then
        \begin{equation*}
            \zeta(\alpha)=\left\{
                \begin{aligned}
                &\left(\frac{\int_0^{\infty}t^{\alpha-1}\omega(\phi(t))dt}{\int_0^{\infty}t^{\alpha-1}\omega(t)dt}\right)^{1/\alpha}&&\text{for}~~\alpha>0,\\
                &\exp\left(\int_0^{\infty}\frac{\omega(\phi(t))-\omega(t)}{t\omega(0)}dt\right)&&\text{for}~~\alpha=0,\\
                &\left(\frac{\int_0^{\infty}t^{\alpha-1}(\omega(\phi(t))-\omega(0))dt}{\int_0^{\alpha-1}(\omega(t)-\omega(0))dt}\right)^{1/\alpha}&&\text{for}~~-1<\alpha<0.
                \end{aligned}\right.
        \end{equation*}
        is a continuous, decreasing function of $\alpha$ on $(-1,\infty)$. Moreover, $\zeta$ is constant on $(-1,\infty)$ if $\phi(t)=\lambda t$ on $[0,\infty)$ for some $\lambda>0$.
    \end{lem}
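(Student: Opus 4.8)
The plan is to prove Lemma 2.16 (the statement labelled \cite[Lemma 17]{HL3}) by reducing the continuity and monotonicity of $\zeta$ to a single analytic device: rewriting each of the three cases as a logarithm of a one-parameter family of measures built from $\omega$ and $\phi$, and then differentiating in $\alpha$. Concretely, I would first fix $u = \log\phi$ as the natural variable: since $t\mapsto\phi(t)$ and $t\mapsto\phi(t)/t$ are both increasing, the substitution $t\mapsto\phi(t)$ is a monotone change of variables, and the ratio appearing in $\zeta(\alpha)$ becomes, after taking logarithms and dividing by $\alpha$, the quantity
\[
\log\zeta(\alpha) = \frac{1}{\alpha}\log\frac{\displaystyle\int_0^\infty t^{\alpha-1}\,\omega(\phi(t))\,dt}{\displaystyle\int_0^\infty t^{\alpha-1}\,\omega(t)\,dt}
\]
for $\alpha>0$, with the analogous expressions (using $\omega(0)-\omega(\cdot)$) for $\alpha\in(-1,0)$. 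The first task is to check that both numerator and denominator are finite, positive, and real-analytic in $\alpha$ on $(0,\infty)$ (resp. $(-1,0)$) under the stated integrability hypotheses; this is a routine application of differentiation under the integral sign, justified by dominated convergence once one splits the $t$-integral at $t=1$ and uses that $\omega$ is decreasing and bounded near $0$. Analyticity in particular gives continuity on $(-1,0)\cup(0,\infty)$.

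The second task is continuity at $\alpha = 0$, i.e. matching the two side limits with the value $\exp\!\big(\int_0^\infty \frac{\omega(\phi(t))-\omega(t)}{t\,\omega(0)}\,dt\big)$. Here I would write $\alpha\log\zeta(\alpha)$ as $\log$ of the ratio of two quantities each of the form $\int_0^\infty t^{\alpha-1}g(t)\,dt$, normalize by the model integral $\int_0^\infty t^{\alpha-1}\omega(t)e^{-?}$ — more precisely, I would use the elementary fact $\int_0^\infty t^{\alpha-1}e^{-t}\,dt = \Gamma(\alpha) \sim 1/\alpha$ as $\alpha\to 0^+$ — and compare. The clean way: set $N(\alpha) = \int_0^\infty t^{\alpha-1}\omega(\phi(t))\,dt$ and $D(\alpha) = \int_0^\infty t^{\alpha-1}\omega(t)\,dt$; then $\log\zeta(\alpha) = \frac{1}{\alpha}\big(\log\alpha N(\alpha) - \log\alpha D(\alpha)\big)$, and one shows $\alpha N(\alpha)\to\omega(0)$, $\alpha D(\alpha)\to\omega(0)$ as $\alpha\to 0^+$ (Abelian/Tauberian type limit, since $\omega$ is decreasing with $\omega(0)$ its value at $0$), so the numerator vanishes and we are in a $0/0$ situation; L'Hôpital in $\alpha$, or better a direct expansion $\alpha N(\alpha) = \omega(0) + \alpha\int_0^\infty t^{\alpha-1}(\omega(\phi(t))-\omega(0)\mathbf 1_{[0,1]}(t)\cdot(\text{something}))\,dt + o(\alpha)$, produces the claimed integral $\int_0^\infty\frac{\omega(\phi(t))-\omega(t)}{t}\,dt$ in the limit. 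The same computation run from the $\alpha\in(-1,0)$ side, using $\Gamma(\alpha)$ via the analytic continuation \eqref{gn}, gives the identical limit, so $\zeta$ is continuous at $0$.

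The third and genuinely substantive task is monotonicity: $\zeta$ is decreasing on $(-1,\infty)$. The standard mechanism is a correlation/Chebyshev inequality. Writing $\psi(t) = \omega(\phi(t))/\omega(t)$, one has for $\alpha>0$ that $\zeta(\alpha)^\alpha = \int\psi(t)\,d\mu_\alpha(t)$ where $d\mu_\alpha = t^{\alpha-1}\omega(t)\,dt / D(\alpha)$ is a probability measure; the key structural point is that $\psi$ is \emph{decreasing} in $t$ (because $\phi(t)\ge t$ forces $\omega(\phi(t))\le\omega(t)$, and more carefully because $\phi(t)/t$ increasing together with $\omega$ decreasing makes the ratio monotone), while the family $\mu_\alpha$ is stochastically ordered in $\alpha$ (larger $\alpha$ puts more mass on larger $t$). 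Then $\frac{d}{d\alpha}\log\zeta(\alpha)$ can be expressed, after differentiating $\frac{1}{\alpha}\log\int t^\alpha \,d\nu$-type expressions, as a negative multiple of a covariance $\mathrm{Cov}_{\mu_\alpha}(\log\psi, \log t)$ plus a Jensen-gap term, and the sign works out because $\log\psi$ and $\log t$ are oppositely monotone. I would carry this out by the cleanest available route — likely showing directly that $\alpha\mapsto\log\zeta(\alpha)$ has nonpositive derivative via the Cauchy–Schwarz/FKG-style inequality $\big(\int t^{\alpha}\,d\nu\big)\big(\int t^{\alpha}\psi\,d\nu\big) \ge \big(\int t^{\alpha}\sqrt{\psi}\,d\nu\big)^2$ is \emph{not} quite what is needed; rather the right inequality is the log-convexity of $\alpha\mapsto\int t^{\alpha}\,d\nu$ combined with monotone rearrangement. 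The equality clause ($\zeta$ constant iff $\phi(t)=\lambda t$) then follows because the covariance above vanishes for all $\alpha$ precisely when $\log\psi$ is $\mu_\alpha$-a.s. constant, i.e. $\omega(\phi(t))/\omega(t)$ is constant, which given the structural hypotheses on $\phi$ forces linearity. I expect the monotonicity step — getting the sign of the derivative cleanly and handling the $\alpha=0$ crossing and the $\alpha\in(-1,0)$ regime where $\omega(0)-\omega$ replaces $\omega$ — to be the main obstacle; the finiteness and analyticity bookkeeping is routine, and for the purposes of this paper I would be content to cite \cite{HL3} for the full details while recording the above outline.
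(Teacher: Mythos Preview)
The paper does not prove this lemma at all: it is stated as \cite[Lemma 17]{HL3} and simply invoked, with no argument given. Your closing sentence --- that for the purposes of this paper you would cite \cite{HL3} --- is therefore exactly what the paper does, and is the correct disposition here.

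As for the sketch itself: the continuity portions (analyticity away from $0$, matching at $\alpha=0$ via the $\Gamma(\alpha)\sim 1/\alpha$ normalization) are along reasonable lines, but the monotonicity step has a real gap. You assert that $\psi(t)=\omega(\phi(t))/\omega(t)$ is decreasing ``because $\phi(t)\ge t$ forces $\omega(\phi(t))\le\omega(t)$'', but the hypotheses do not give $\phi(t)\ge t$: only $\phi(0)=0$, $\phi$ increasing, and $\phi(t)/t$ increasing are assumed, and the limit of $\phi(t)/t$ as $t\to 0^+$ may well be smaller than $1$. Even granting the right monotonicity of $\psi$, you yourself note that the Cauchy--Schwarz inequality you first write down ``is not quite what is needed'', and you never settle on a concrete inequality that delivers $\frac{d}{d\alpha}\log\zeta(\alpha)\le 0$. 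The actual argument in \cite{HL3} is somewhat more delicate than a one-line covariance bound. Finally, note that the lemma only asserts the \emph{if} direction for constancy (``$\zeta$ is constant if $\phi(t)=\lambda t$''), which is an immediate computation; your discussion of the converse via vanishing covariance is not required by the statement.
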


    For a non-zero and log-concave function $f\in L^2(\rn)$ and $u\in\sn$, set
    \begin{equation}\label{g-def}
        g(t,u)=\frac{1}{\|f\|_2^2}\int_{\rn}f(x)f(x+tu)dx.
    \end{equation}
   For simplicity of notation, we shall often write $g(t)$ instead of $g(t,u)$ when $u\in\sn$ is fixed and no confusion arises. This convention will be used throughout this section.
   
   Let $u\in\sn$ be fixed. Then $g(t)$ is a positive, decreasing and log-concave function by the Pr{\'e}kopa-Leindler inequality. Let $\omega(t)=g(0)e^{-t}$ and $\phi(t)=-\log(g(t)/g(0))$.  Here we apply the analytic continuation formulas \eqref{gn} for the Gamma function. 
    We observe that
    \[\zeta(\alpha)=\frac{\rho_{\Ra f}(u)}{\Gamma(\alpha+1)^{1/\alpha}}\]
    for $\alpha\in(-1,0)$ and $\alpha>0$. Therefore Lemma \ref{HL} implies that $\rho_{\rR_0 f}(u)=\lim_{\alpha\rightarrow 0}\rho_{\Ra f}(u)$ and
    \[\log\rho_{\rR_0f}(u)=-\gamma+\int_0^{\infty}\frac{1}{t}\Big(\frac{1}{\|f\|_2^2}\int_{\rn}f(x)f(x+tu)dx-e^{-t}\Big)dt.\]

   In our setting, we aim to extend the continuity of $\rho_{\Ra f}$ with respect to $\alpha$ to more general classes of functions. However, it is necessary to first analyze whether $\log\rho_{\rR_0 f}(u)$ is well-defined before moving forward.

    For non-negative, non-zero $f\in L^2(\rn)$ and fixed $u\in\sn$, we still use the expression for $g$ given in \eqref{g-def}, and have
    \begin{equation}\label{expand}
        \begin{aligned}
        \log\rho_{\rR_0f}(u)&=-\gamma+\int_0^{\infty}\frac{g(t)-e^{-t}}{t}dt\\&=C+\int_0^{1}\frac{g(t)-1}{t}dt+\int_1^{\infty}\frac{g(t)}{t}dt,
        \end{aligned}
    \end{equation}
    where $C$ is a constant given by
    \begin{equation}\label{C}
        C=-\gamma+\int_0^1\frac{1-e^{-t}}{t}dt+\int_1^{\infty}\frac{e^{-t}}{t}dt.
    \end{equation}
    By H\"older's inequality, it follows that $0\leq g(t)\leq 1$, and hence
    \[\int_0^1\frac{g(t)-1}{t}dt<0~~\text{and}~~\int_1^\infty\frac{g(t)}{t}dt>0.\]
    Therefore it is not clear whether $\log\rho_{\rR_0 f}(u)$ is well-defined, as the first part above may diverge to $-\infty$ while the other may diverge to $+\infty$. 
    
    The following lemma says that $\log\rho_{\rR_0f}(u)$ is well-defined for all $u\in\sn$, if $f\in W^{\beta,2}(\rn)$ for some $\beta\in(0,\frac{1}{2})$.

    { \begin{lem}\label{0 well-defined}
        For $\beta\in(0,\frac{1}{2})$, let $f\in W^{\beta,2}(\rn)$ be non-negative, non-zero. Then $\log{\rho_{\rR_0f}}(u)$ is well-defined for all $u\in\sn$. If, in addition, $f\in L^1(\rn)$, then $\log\rho_{\rR_0 f}(u)$ is finite for almost all $u\in\sn$, and $\log\rho_{\rR_0 f}\in L^1(\sn)$.
    \end{lem}}

    \begin{proof}
    Let $g(t,u)$ be defined in \eqref{g-def}. By the representation formula \eqref{expand} for $\log\rho_{\rR_0f}(u)$, we first consider the integral from $0$ to $1$, where
        \begin{align*}
            \int_0^1\frac{g(t,u)-1}{t}dt&=-\frac{1}{2\|f\|_2^2}\int_0^1\frac{1}{t}\int_{\rn}|f(x)-f(x+tu)|^2dxdt\\
            &\ge -\frac{1}{2\|f\|_2^2}\int_0^1 t^{-2\beta-1}\int_{\rn}|f(x)-f(x+tu)|^2dxdt\\
            &\ge -\frac{1}{2\|f\|_2^2}\int_0^{\infty} t^{-2\beta-1}\int_{\rn}|f(x)-f(x+tu)|^2dxdt=-\frac{1}{2\|f\|_2^2}\rho_{\Pi_2^{*,\beta}f}(u)^{-2\beta}.
        \end{align*}
        Since $f\in W^{\beta,2}(\rn)$, the set $\Pi_2^{*,\beta}f$ is a star body (see \cite[Proposition 4]{HL2}) and thus  $\rho_{\Pi_2^{*,\beta}f}(u)^{-2\beta}$ is uniformly bounded from below. This shows that $\log\rho_{\rR_0f}(u)$ is well-defined as an extended real number. Moreover, since $f\in W^{\beta,2}(\rn)$, we have
        \begin{equation}\label{finite-01}
            \begin{aligned}
                0\leq -\int_{\sn}\int_0^1\frac{g(t,u)-1}{t}dtdu&\leq \frac{1}{2\|f\|_2^2}\int_{\sn}\int_0^{\infty}t^{-2\beta-1}\int_{\rn}|f(x)-f(x+tu)|^2dxdtdu\\
                &=\frac{1}{2\|f\|_2^2}\di\frac{|f(x)-f(y)|^2}{|x-y|^{n+2\beta}}dxdy<\infty,
            \end{aligned}
        \end{equation}
        which shows that $\int_{\sn}\log\rho_{\rR_0f}(u)du$ is also well-defined, possibly taking value $+\infty$.

        If $f\in W^{\beta,2}(\rn)\cap L^1(\rn)$, Lemma \ref{lim} implies $f\in L^{\frac{2n}{n+\alpha^{\prime}}}(\rn)$ for some $\alpha^{\prime}\in(0,n)$. Then
        \begin{align*}
            \int_1^{\infty}\frac{g(t,u)}{t}dt&=\frac{1}{\|f\|_2^2}\int_1^{\infty}\frac{1}{t}\int_{\rn}f(x)f(x+tu)dxdt\\
            &\leq \frac{1}{\|f\|_2^2}\int_1^{\infty}t^{\alpha^{\prime}-1}\int_{\rn}f(x)f(x+tu)dxdt\\
            &\leq \frac{1}{\|f\|_2^2}\int_0^{\infty}t^{\alpha^{\prime}-1}\int_{\rn}f(x)f(x+tu)dxdt=\frac{1}{\|f\|_2^2}\rho_{\rS_{\alpha^{\prime}}f}(u)^{\alpha^{\prime}}.
        \end{align*}
        The HLS inequality implies that $\rho_{\rS_{\alpha^{\prime}}f}(u)^{\alpha^\prime}$ is finite for almost all $u\in\sn$ and
        \begin{equation}\label{finite-100}
            \begin{aligned}
                0\leq \int_{\sn}\int_1^{\infty}\frac{g(t,u)}{t}dtdu&\leq \frac{1}{\|f\|_2^2}\int_{\sn}\int_0^{\infty}t^{\alpha^{\prime}-1}\int_{\rn}f(x)f(x+tu)dxdtdu\\
                &=\frac{1}{\|f\|_2^2}\di \frac{f(x)f(y)}{|x-y|^{n-\alpha^{\prime}}}dxdy \leq \gamma_{n,\alpha^{\prime}}{\|f\|_{\frac{2n}{n+\alpha^{\prime}}}^2}{\|f\|_2^{-2}}<\infty.
            \end{aligned}
        \end{equation}
        Combining \eqref{expand}, \eqref{finite-01} and \eqref{finite-100}, we obtain that $\log\rho_{\rR_0f}\in L^1(\sn)$.
    \end{proof}   

    
    The following basic lemma describes the inclusion property between fractional $L^2$ Sobolev spaces, which is also presented in \cite{NPV}. For completeness, we provide a proof here.

    {\begin{lem}\label{frac l2}
        Let $f$ be non-negative. If $f\in W^{\beta,2}(\rn)$ for some $\beta\in(0,\frac{1}{2})$, then
        \[\int_{\rn}\int_{\rn}\frac{|f(x)-f(y)|^2}{|x-y|^{n-2\alpha}}dxdy<\infty,\]
        for $-\beta<\alpha<0$.
    \end{lem}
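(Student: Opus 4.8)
The plan is to split the double integral according to whether $|x-y|$ is small or large. Write
\[
\iint_{\rn\times\rn}\frac{|f(x)-f(y)|^2}{|x-y|^{n-2\alpha}}\,dx\,dy
=\iint_{|x-y|\le 1}\frac{|f(x)-f(y)|^2}{|x-y|^{n-2\alpha}}\,dx\,dy
+\iint_{|x-y|> 1}\frac{|f(x)-f(y)|^2}{|x-y|^{n-2\alpha}}\,dx\,dy,
\]
and bound each piece separately. For the near-diagonal piece, since $\alpha>-\beta$ we have $n-2\alpha<n+2\beta$, so for $|x-y|\le 1$ one has $|x-y|^{-(n-2\alpha)}\le |x-y|^{-(n+2\beta)}$; hence this term is at most $\iint_{|x-y|\le 1}|f(x)-f(y)|^2|x-y|^{-(n+2\beta)}\,dx\,dy$, which is finite because $f\in W^{\beta,2}(\rn)$ by hypothesis (the full Gagliardo seminorm over $\rn\times\rn$ is finite, so in particular its restriction to $|x-y|\le 1$ is).

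For the far-diagonal piece, use $|f(x)-f(y)|^2\le 2|f(x)|^2+2|f(y)|^2$ and, since $\alpha<0$ gives $n-2\alpha>n$, the bound $|x-y|^{-(n-2\alpha)}\le |x-y|^{-n}$ on the region $|x-y|>1$; but that alone does not suffice since $|x-y|^{-n}$ is not integrable at infinity. Instead I would use that $\alpha<0$ makes $|x-y|^{-(n-2\alpha)}=|x-y|^{-n+2\alpha}$ integrable over $\{|x-y|>1\}$, with $\int_{|z|>1}|z|^{-n+2\alpha}\,dz = n\omega_n\int_1^\infty r^{2\alpha-1}\,dr = n\omega_n/(-2\alpha)<\infty$. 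Then by Fubini and the symmetric splitting,
\[
\iint_{|x-y|> 1}\frac{|f(x)-f(y)|^2}{|x-y|^{n-2\alpha}}\,dx\,dy
\le 4\int_{\rn}|f(x)|^2\Big(\int_{|z|>1}|z|^{-n+2\alpha}\,dz\Big)dx
=\frac{4n\omega_n}{-2\alpha}\,\|f\|_2^2<\infty,
\]
using $f\in L^2(\rn)$ (which follows from $f\in W^{\beta,2}(\rn)$). Adding the two bounds gives the claim.

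The only mild subtlety — and the closest thing to an obstacle — is making sure the two regimes ($\alpha$ close to $-\beta$ versus $\alpha$ close to $0$) are both covered by a single argument; but the near-diagonal estimate uses only $\alpha>-\beta$ and the far-diagonal estimate uses only $\alpha<0$, so every $\alpha\in(-\beta,0)$ is handled. No positivity of $f$ is actually needed for the inequality itself; it is stated merely for consistency with the ambient setup. This completes the proof sketch.
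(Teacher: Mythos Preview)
Your proof is correct. It differs from the paper's, which works on the Fourier side: using the identity
\[
\di\frac{|f(x)-f(y)|^2}{|x-y|^{n-2\alpha}}\,dx\,dy
=\pi^{\frac{n}{2}-2\alpha}\frac{|\Gamma(\alpha)|}{2\Gamma(\frac{n}{2}-\alpha)}\int_{\rn}|\xi|^{-2\alpha}|\hat f(\xi)|^2\,d\xi,
\]
the paper splits the $\xi$-integral into $B_n$ and $\rn\setminus B_n$, bounding the low-frequency part by $\|f\|_2^2$ (since $|\xi|^{-2\alpha}\le 1$ there) and the high-frequency part by $\int|\xi|^{2\beta}|\hat f|^2$ (since $-2\alpha<2\beta$).

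Your argument is the physical-space mirror of this: you split at $|x-y|=1$, controlling the near-diagonal piece by the $W^{\beta,2}$ seminorm (using $-2\alpha<2\beta$) and the far-diagonal piece by $\|f\|_2^2$ (using $2\alpha<0$). The two proofs carry exactly the same information; yours is slightly more elementary in that it avoids invoking Parseval and the fractional-Laplacian identity, while the paper's version is shorter once that identity is available and makes the dependence on $\alpha$ more explicit through the constant. Your remark that non-negativity of $f$ is not used is also correct.
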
}

    \begin{proof}
        By the fractional Laplacian operator introduced in Section 2, \eqref{pv} and the Parseval's formula, we can rewrite the double integral as
        \begin{equation}\label{fl}
        \begin{aligned}
            \di\frac{|f(x)-f(y)|^2}{|x-y|^{n-2\alpha}}dxdy&=2\di\frac{f(x)(f(x)-f(y))}{|x-y|^{n-2\alpha}}dxdy\\&=\pi^{\frac{n}{2}-2\alpha}\frac{2|\Gamma(\alpha)|}{\Gamma(\frac{n}{2}-\alpha)}\int_{\rn}|\xi|^{-2\alpha}|\hat{f}(\xi)|^2d\xi,
        \end{aligned}
        \end{equation}
        where 
        \[\int_{\rn}|\xi|^{-2\alpha}|\widehat{f}(\xi)|^2d\xi\leq \int_{B^n}|\widehat{f}(\xi)|^2d\xi+\int_{\rn\backslash B^n}|\xi|^{-2\alpha}|\widehat{f}(\xi)|^2d\xi.\]

        Since $f\in L^2(\rn)$,
        \[\int_{B^n}|\widehat{f}(\xi)|^2d\xi\leq \int_{\rn}|\widehat{f}(\xi)|^2d\xi=\|f\|_2^2.\]
        For $\xi\in\rn\backslash B^n$ and $\alpha\in(-\beta,0)$,
        \[\int_{\rn\backslash B^n}|\xi|^{-2\alpha}|\widehat{f}(\xi)|^2d\xi\leq\int_{\rn}|\xi|^{2\beta}|\widehat{f}(\xi)|^2d\xi,\]
        we then conclude the proof by \eqref{fl} and the assumption that $f\in W^{\beta,2}(\rn)$.
    \end{proof}

    We establish the following result on the continuity of $\rho_{\Ra f}(u)$ with respect to $\alpha$.

{ \begin{lem}\label{conti for cs}
    Let $u\in\sn$ be fixed and $\beta\in(0,\frac{1}{2})$. Suppose $f\in W^{\beta, 2}(\rn)$ is non-negative and compactly supported. Then $\rho_{\Ra f}(u)$ is continuous with respect to $\alpha$ on $(-2\beta,\infty)$. 
    
\end{lem}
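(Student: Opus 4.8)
The plan is to split the half-line $(0,\infty)$ into $(0,1]$ and $(1,\infty)$, just as in the computation preceding Lemma \ref{0 well-defined}, and to establish continuity of each piece of $\log\rho_{\Ra f}(u)$ (equivalently of the parametrized integral $\int_0^\infty t^{\alpha-1}(\cdots)\,dt$ appearing in the definitions of $\rho_{\rS_\alpha f}$ and $\rho_{\Pi_2^{*,-\alpha/2}f}$) as a function of $\alpha$. Write $g(t)=\frac{1}{\|f\|_2^2}\int_{\rn}f(x)f(x+tu)\,dx$; since $f$ is compactly supported, there is $T>0$ with $g(t)=0$ for $t>T$, so the integrals over $(1,\infty)$ are in fact integrals over the bounded interval $(1,T]$. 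On $(1,T]$ the integrand $t^{\alpha-1}g(t)$ is bounded uniformly for $\alpha$ in compact subsets of $\tr$ and continuous in $\alpha$ for each $t$, so dominated convergence gives continuity of $\alpha\mapsto\int_1^\infty t^{\alpha-1}g(t)\,dt$ on all of $\tr$; similarly $\alpha\mapsto\int_1^\infty t^{\alpha-1}(g(t)-e^{-t})\,dt$ and the $e^{-t}$-correction terms entering through the analytic continuation \eqref{gn} of $\Gamma$ are continuous.

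The delicate piece is the behavior near $t=0$. Here I would use the identity $g(t)-g(0)=-\frac{1}{2\|f\|_2^2}\int_{\rn}|f(x)-f(x+tu)|^2\,dx$ already exploited in the proof of Lemma \ref{0 well-defined}, and the hypothesis $f\in W^{\beta,2}(\rn)$ in the sharp form of Lemma \ref{frac l2}: by \eqref{fl}, for every $\alpha'\in(-\beta,0)$ the quantity $\int_0^\infty t^{2\alpha'-1}\int_{\rn}|f(x)-f(x+tu)|^2\,dx\,dt=\rho_{\Pi_2^{*,-\alpha'}f}(u)^{2\alpha'}$ is finite for almost all $u$ (indeed $f\in W^{\beta,2}$ gives $\Pi_2^{*,\beta}f$ a star body, so it is finite for all $u$). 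Fix such a $u$. For $\alpha$ in a compact subinterval $[\alpha_1,\alpha_2]\subset(-2\beta,\infty)$ choose $\alpha'$ with $2\alpha'<\alpha_1$ and $\alpha'>-\beta$; then on $(0,1]$ we have the domination $t^{\alpha-1}\le t^{2\alpha'-1}$, so that $t^{\alpha-1}\bigl(g(0)-g(t)\bigr)\le t^{2\alpha'-1}\bigl(g(0)-g(t)\bigr)$, an $L^1(0,1)$ majorant independent of $\alpha\in[\alpha_1,\alpha_2]$. Since the integrand is continuous in $\alpha$ pointwise, dominated convergence yields continuity of $\alpha\mapsto\int_0^1 t^{\alpha-1}(g(0)-g(t))\,dt$ on $(-2\beta,\infty)$, and combining with the bounded $\int_0^1 t^{\alpha-1}g(0)\,dt=g(0)/\alpha$ (continuous for $\alpha>0$) and the corresponding rewriting for $\alpha<0$ via \eqref{gn} handles the near-zero part for all $\alpha\neq0$.

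Finally I would assemble the pieces: for $\alpha>0$, $\rho_{\Ra f}(u)=(\alpha/\|f\|_2^2)^{1/\alpha}\rho_{\rS_\alpha f}(u)$ with $\rho_{\rS_\alpha f}(u)^\alpha=\|f\|_2^2\int_0^\infty t^{\alpha-1}g(t)\,dt$, and for $-2\beta<\alpha<0$, $\rho_{\Ra f}(u)=(-2\alpha/\|f\|_2^2)^{1/\alpha}\rho_{\Pi_2^{*,-\alpha/2}f}(u)$ with $\rho_{\Pi_2^{*,-\alpha/2}f}(u)^{\alpha}=\frac{1}{2}\|f\|_2^2\int_0^\infty t^{\alpha-1}\bigl(g(t)-g(0)\bigr)\,dt\cdot(\text{const})$; both are continuous and strictly positive at each $\alpha\neq0$ by the above, and taking $\alpha$-th roots and multiplying by the explicit prefactors (continuous in $\alpha$ on the relevant ranges) preserves continuity. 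Continuity at $\alpha=0$ is exactly the content of Lemma \ref{HL}/the discussion following it once we know $\log\rho_{\rR_0 f}(u)$ is finite (Lemma \ref{0 well-defined}): one checks $\lim_{\alpha\to 0^\pm}\frac{1}{\alpha}\log\bigl(\int_0^\infty t^{\alpha-1}g(t)\,dt\big/\Gamma(\alpha+1)\bigr)=-\gamma+\int_0^\infty\frac{g(t)-e^{-t}}{t}\,dt$, e.g. by writing the difference quotient of $\alpha\mapsto\log(\cdots)$ at $0$ and differentiating under the integral sign, the domination near $0$ again supplied by $W^{\beta,2}$ and near $\infty$ by compact support. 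The main obstacle is precisely this uniform integrability near $t=0$: one must make sure the $W^{\beta,2}$ hypothesis is used in its sharp form (via Lemma \ref{frac l2}, valid for all $-\beta<\alpha<0$) so that the domination $t^{\alpha-1}\le t^{2\alpha'-1}$ covers the full range $\alpha>-2\beta$ claimed in the statement, rather than only $\alpha>-\beta$.
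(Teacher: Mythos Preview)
Your strategy is correct and the argument goes through, but it differs from the paper's proof in the main technical device. The paper splits into three cases ($\alpha_0>0$, $\alpha_0<0$, $\alpha_0=0$) and, after reducing to $\operatorname{supp} f\subset\tfrac12 B_n$, handles $\alpha_0>0$ by the \emph{generalized} dominated convergence theorem (the majorant $\alpha_j t^{\alpha_j-1}$ depends on $j$ but $\int_0^1\alpha_j t^{\alpha_j-1}\,dt\equiv 1$), and handles $\alpha_0\le 0$ by passing to monotone subsequences and applying the monotone convergence theorem to $\int_0^1 t^{\alpha_j-1}(1-g(t))\,dt$ and to the pieces of $\frac{1}{\alpha}\rho_{\Ra f}(u)^\alpha-\Gamma(\alpha)=\int_0^\infty t^{\alpha-1}(g(t)-e^{-t})\,dt$. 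Your route is more uniform: a single integrable majorant $t^{2\alpha'-1}(1-g(t))$ on $(0,1]$, supplied by $f\in W^{\beta,2}$ through $\Pi_2^{*,\beta}f$, lets ordinary dominated convergence cover every $\alpha$ in a compact subinterval of $(-2\beta,\infty)$ at once. This is cleaner, at the cost of invoking the $W^{\beta,2}$ hypothesis more heavily; the paper's monotone-subsequence argument is more elementary but case-by-case.

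Two small corrections. First, your appeal to Lemma~\ref{HL} at $\alpha=0$ is misplaced: that lemma (and the discussion following it) needs $g$ log-concave, which is only guaranteed for log-concave $f$. You do not actually need it, since your direct limit computation suffices; just drop the citation. Second, the displayed limit you wrote at $\alpha=0$ has a normalization error: with $\Gamma(\alpha+1)$ in the denominator the ratio $\int_0^\infty t^{\alpha-1}g(t)\,dt\big/\Gamma(\alpha+1)$ blows up like $1/\alpha$. What you want is the paper's identity $\frac{1}{\alpha}\rho_{\Ra f}(u)^\alpha-\Gamma(\alpha)=\int_0^\infty t^{\alpha-1}(g(t)-e^{-t})\,dt$, whose right-hand side tends to $\gamma+\log\rho_{\rR_0 f}(u)$ by your dominated convergence on $(0,1]$ (majorant from $W^{\beta,2}$) and on $(1,T]$ (compact support); combined with $\Gamma(\alpha+1)^{1/\alpha}\to e^{-\gamma}$ this yields $\rho_{\Ra f}(u)\to\rho_{\rR_0 f}(u)$.
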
}

\begin{proof}
    Suppose $(\alpha_j)$ is an arbitrary sequence which converges to $\alpha_0$. Note that, for $\lambda>0$ and $f_{\lambda}(x):=f(\lambda x)$, we have $\Ra f_{\lambda}=\lambda^{-1}\Ra f$. Hence, we may assume ${\rm supp}f\subset \frac{1}{2}B^n$ without loss of generality. 
        
    \noindent{Case 1:} Let $\alpha_0>0$.
    
    After removing finitely many terms, we may assume $\alpha_j>0$ for all $j\in\mathbb{N}$.
    Since ${\rm supp}f\subset \frac{1}{2}B^n$, we have
    \[\rho_{\rR_{\alpha_j} f}(u)^{\alpha_j}=\frac{\alpha_j}{\|f\|_2^2}\int_0^1 t^{\alpha_j-1}\int_{\rn} f(x)f(x+tu)dxdt.\]
    By H{\"o}lder's inequality, 
    \[\alpha_jt^{\alpha_j-1}\frac{1}{\|f\|_2^2}\int_{\rn}f(x)f(x+tu)dx\leq \alpha_j t^{\alpha_j-1},\]
    where we have
    \[\lim_{j\rightarrow\infty}\int_0^1\alpha_jt^{\alpha_j-1}dt=1=\int_0^1\alpha_0t^{\alpha_0-1}dt.\]
    Then the dominated convergence theorem implies the continuity.

    {\noindent{Case 2:} Let $-2\beta<\alpha_0<0$.

    Similarly, we assume $-2\beta<\alpha_j<0$ for all $j\in\mathbb{N}$. Since $f\in W^{\beta,2}(\rn)$, Lemma \ref{frac l2} implies that $f\in W^{-\frac{\alpha}{2},2}(\rn)$ for all $\alpha\in(-2\beta,0)$ and thus $\rho_{\Ra f}(u)$ is well-defined { for all $u\in\sn$}. 
    
    Since $f$ is compactly supported, 
    \begin{align*}
    -\frac{1}{\alpha_j}\rho_{\rR_{\alpha_j}f}(u)^{\alpha_j}&=\int_0^{\infty}t^{\alpha_j-1}\Big(1-\frac{1}{\|f\|_2^2}\int_{\rn}f(x)f(x+tu)dx\Big)dt\\
    &=\int_0^{1}t^{\alpha_j-1}\Big(1-\frac{1}{\|f\|_2^2}\int_{\rn}f(x)f(x+tu)dx\Big)dt-\frac{1}{\alpha_j}.
    \end{align*}
    By taking a subsequence, we may assume that $(\alpha_{j})$ is a monotone increasing (or decreasing) subsequence and therefore $t^{\alpha_j-1}$ is monotone decreasing (or increasing) since $t\in(0,1)$.  By the monotone convergence theorem, we have
    \[\lim_{j\rightarrow\infty}-\frac{1}{\alpha_{j}}\rho_{\rR_{\alpha_{j}} f}(u)^{\alpha_{j}}=\int_0^{1}t^{\alpha_0-1}\Big(1-\frac{1}{\|f\|_2^2}\int_{\rn}f(x)f(x+tu)dx\Big)dt-\frac{1}{\alpha_0},\]
    which yields the continuity of $\rho_{\Ra f}(u)$.}

    \noindent{Case 3:} 
    Let $\alpha_0=0$.
    
    We first consider the right-hand limit and thus assume $\alpha_j>0$ for all $j\in\mathbb{N}$. Note that
    \[\rho_{\Rf}(u)=(\Gamma(\alpha+1))^{\frac{1}{\alpha}}\left(1+\alpha\cdot\frac{\frac{1}{\alpha}\rho_{\Rf}(u)^{\alpha}-\Gamma(\alpha)}{\Gamma(\alpha+1)}\right)^{\frac{1}{\alpha}}.\]
    Since $\frac{1}{\alpha}\rho_{\Rf}(u)^{\alpha}-\Gamma(\alpha)$ is continuous with respect to $\alpha>0$, by applying $\lim_{\alpha\rightarrow 0^+}(1+\alpha)^{\frac{1}{\alpha}}=e$ and $\lim_{\alpha\rightarrow 0^+}(\Gamma(\alpha+1))^{\frac{1}{\alpha}}=e^{-\gamma}$, it suffices to prove
    \begin{equation}\label{lim r0}
    \lim_{j\rightarrow \infty}\left(\frac{1}{\alpha_j}\rho_{\rR_{\alpha_j}f}(u)-\Gamma(\alpha_j)\right)=\gamma+\log\rho_{\rR_0 f}(u).
    \end{equation}

    Let $g$ be defined in \eqref{g-def}. By the definition of $\Rf$, 
    \begin{align*}
    \frac{1}{\alpha_j}&\rho_{\rR_{\alpha_j}f}(u)^{\alpha_j}-\Gamma(\alpha_j)\\
    &=\int_0^1t^{\alpha_j-1}\left(g(t)-1\right)dt+\int_0^1t^{\alpha_j-1}(1-e^{-t})dt+\int_1^{\infty}t^{\alpha_j-1}g(t)dt-\int_1^{\infty}t^{\alpha_j-1}e^{-t}dt.
    \end{align*}
    Similarly, we may assume that $(\alpha_j)$ is a monotone sequence and $\alpha_j<1$ for $j$ large enough. Then an application of the monotone convergence theorem implies \eqref{lim r0}.
    


    For the left-hand limit, we assume $\alpha_j\in(-2\beta,0)$ for all $j\in\mathbb{N}$. With the same notation and {the analytic continuation formula of the gamma function from \eqref{gn}}, we have
    \[\frac{1}{\alpha_j}\rho_{\rR_{\alpha_j}f}(u)^{\alpha_j}-\Gamma(\alpha_j)=\int_0^1t^{\alpha_j-1}(g(t)-1)dt+\int_0^1t^{\alpha_j-1}(1-e^{-t})dt+\int_1^{\infty}t^{\alpha_j-1}e^{-t}dt.\]
    The same argument as in the right-hand limit case yields \eqref{lim r0}, which completes the proof. 
\end{proof}

By Case 1 and Case 2 in the proof above, for non-negative, non-zero $f\in W^{\beta,2}(\rn)$ with compact support, $\rho_{\Ra f}(u)$ is uniformly bounded for $\alpha\in (-2\beta,0)\cup (0,\infty)$ and $u\in\sn$. By Lemma \ref{conti for cs},  we can directly get the following corollary. 

\begin{cor}\label{conti vol}
    For $\beta\in(0,\frac{1}{2})$, let $f\in W^{\beta,2}(\rn)$ be non-zero, non-negative and compactly supported. Then $|\Ra f|$ is positive and continuous with respect to $\alpha$ on $(-2\beta,\infty)$.
\end{cor}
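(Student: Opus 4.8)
The plan is to write $|\Ra f|$ as a spherical integral, combine the pointwise continuity from Lemma~\ref{conti for cs} with a uniform bound on the radial functions through dominated convergence to obtain continuity, and then check positivity separately in the three regimes $\alpha>0$, $\alpha=0$, and $-2\beta<\alpha<0$. For continuity I would start from
\[|\Ra f|=\frac{1}{n}\int_{\sn}\rho_{\Ra f}(u)^{n}\,du\]
and note that, by Lemma~\ref{conti for cs}, for each fixed $u\in\sn$ the integrand $\alpha\mapsto\rho_{\Ra f}(u)^{n}$ is continuous on $(-2\beta,\infty)$. Given an arbitrary sequence $\alpha_j\to\alpha_0$ in $(-2\beta,\infty)$, I would dominate $\rho_{{\rm R}_{\alpha_j}f}(u)^{n}$ by an integrable function of $u$ using the uniform bound recorded in the remark preceding the statement: the proof of Lemma~\ref{conti for cs} produces a constant $M_1$ with $\rho_{\Ra f}(u)\le M_1$ for all $\alpha>0$ and $u\in\sn$, while for $\alpha\in(-2\beta,0)$ the set $\Ra f$ is a dilate of $\Pit$, a bounded star body by \cite[Proposition 4]{HL2}, so $\rho_{\Ra f}(u)\le M_2$ there; letting $\alpha\to0$ in either estimate also controls $\rho_{\rR_0f}(u)$, so $\rho_{\Ra f}(u)\le M:=\max\{M_1,M_2\}$ for all $\alpha\in(-2\beta,\infty)$ and $u\in\sn$. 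Since $u\mapsto M^{n}$ is integrable on $\sn$, the dominated convergence theorem gives $|{\rm R}_{\alpha_j}f|\to|{\rm R}_{\alpha_0}f|$, and as $(\alpha_j)$ was arbitrary, $|\Ra f|$ is continuous on $(-2\beta,\infty)$.

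For positivity I would split into cases. When $\alpha>0$: since $f\in L^2(\rn)$ is non-negative and non-zero, the map $t\mapsto\int_{\rn}f(x)f(x+tu)\,dx$ is continuous (it is the convolution of $f$ with $x\mapsto f(-x)$, two $L^2$ functions, evaluated at $tu$) and equals $\|f\|_2^2>0$ at $t=0$, hence is positive on some interval $[0,\delta)$; therefore $\rho_{\Sa}(u)^{\alpha}\ge\int_0^{\delta}t^{\alpha-1}\int_{\rn}f(x)f(x+tu)\,dx\,dt>0$ for every $u\in\sn$, so $|\Sa|>0$ and thus $|\Ra f|>0$. When $-2\beta<\alpha<0$: $\Ra f$ is a dilate of $\Pit$, a star body with the origin in its interior, so $|\Ra f|>0$. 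When $\alpha=0$: Lemma~\ref{0 well-defined} gives that $\log\rho_{\rR_0f}(u)$ is finite for almost every $u\in\sn$, hence $\rho_{\rR_0f}(u)>0$ for a.e.\ $u$ and $|\rR_0f|=\frac1n\int_{\sn}\rho_{\rR_0f}(u)^{n}\,du>0$. This exhausts $(-2\beta,\infty)$.

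The hard part is not any single estimate — the pointwise continuity, the per-regime bounds, and the positivity inputs are all already available — but rather the bookkeeping needed to splice the three regimes together: assembling one domination constant valid simultaneously for $\alpha>0$, $\alpha=0$ and $-2\beta<\alpha<0$ (in particular, checking that the one-sided limits as $\alpha\to0$ inherit the bounds), and confirming that $u\mapsto\rho_{\Ra f}(u)$ is measurable in each regime so that the spherical integral and dominated convergence are legitimate. None of this is genuinely difficult once Lemma~\ref{conti for cs}, the structure of $\Pit$, and Lemma~\ref{0 well-defined} are in hand.
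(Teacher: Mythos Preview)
Your proposal is correct and follows exactly the approach the paper sketches in the paragraph immediately preceding the corollary: pointwise continuity from Lemma~\ref{conti for cs}, a uniform upper bound on $\rho_{\Ra f}(u)$ in $u$ and $\alpha$ (from the computations in that lemma for $\alpha>0$ and from the bounded-star-body structure for $\alpha<0$), and then dominated convergence on the spherical integral; you additionally spell out the positivity in each regime, which the paper leaves implicit.

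One small caveat worth tightening: your justification for the uniform bound on the negative side (``$\Ra f$ is a dilate of $\Pit$, a bounded star body, so $\rho_{\Ra f}(u)\le M_2$'') is the same shortcut the paper takes, but as written it does not give a bound that is uniform as $\alpha\to 0^-$, since the dilation factor $(-2\alpha/\|f\|_2^2)^{1/\alpha}$ blows up. The clean way---already implicit in Case~2 of the proof of Lemma~\ref{conti for cs}---is to note that for $\operatorname{supp} f\subset\tfrac12 B_n$ and $\alpha<0$ one has $\rho_{\Ra f}(u)^{\alpha}=1-\alpha\int_0^1 t^{\alpha-1}(1-g(t))\,dt\ge 1$, hence $\rho_{\Ra f}(u)\le 1$ uniformly; this matches the bound $\rho_{\Ra f}(u)\le 1$ you already get for $\alpha>0$ and closes the domination at $\alpha=0$.
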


We now move from compactly supported functions to general ones. For later use in the next lemma,  we define, for $u\in\sn$,
\begin{equation}\label{qu}
    \mathcal{E}_u(f)=\int_0^1\frac{1}{t}\int_{\rn}|f(x)-f(x+tu)|^2dxdt.
\end{equation}
We first record a simple observation: $\mathcal{E}_u^{1/2}$ satisfies the triangle inequality, in the sense that 
\begin{equation}\label{triangle}
    \mathcal{E}_u(f+h)^{1/2}\leq \mathcal{E}_u(f)^{1/2}+\mathcal{E}_u(h)^{1/2},
\end{equation}
whenever the quantities involved are finite. Indeed,
\begin{equation*}
    \begin{aligned}
        |(f(x)-f(x+tu))+(h(x)-h(x+tu))|^2&\leq |f(x)-f(x+tu)|^2+|h(x)-h(x+tu)|^2\\
        &+2|f(x)-f(x+tu)|\cdot|h(x)-h(x+tu)|.
    \end{aligned}
\end{equation*}
By the Cauchy--Schwarz inequality with respect to the measure
\(\frac{1}{t}dxdt\) on \(\rn\times(0,1)\), we have
\begin{equation*}
    \begin{aligned}
        \int_0^1\frac{1}{t}\int_{\rn}|f(x)-f(x+tu)&|\cdot|h(x)-h(x+tu)|dxdt \leq \mathcal{E}_u(f)^{1/2}\mathcal{E}_u(h)^{1/2},
    \end{aligned}
\end{equation*}
which implies the triangle inequality \eqref{triangle}.

\begin{lem}\label{mono conv}
Let $\beta\in(0,1)$ and $f\in W^{\beta,2}(\rn)$ be non-negative,  nonzero. 
Let $\eta\in C_c^\infty(\rn)$ be a cut-off function such that
$0\le \eta\le1$, with $\eta=1$ on $B^n$ and $\eta=0$ outside $2B^n$. Set
\[\eta_j(x)=\eta(x/j),\qquad f_j=\eta_j f .\]
Then $f_j\in W^{\beta, 2}(\rn)\cap L^1(\rn)$, and
\begin{equation}\label{pointwise}
    \lim_{j\to\infty}\log\rho_{\rR_0 f_j}(u)=\log\rho_{\rR_0 f}(u)
\end{equation}
for all $u\in\sn$. Moreover,
\begin{equation}\label{mono2}
    \lim_{j\to\infty}\int_{\sn}\log\rho_{\rR_0 f_j}(u)du=\int_{\sn}\log\rho_{\rR_0 f}(u)du .
\end{equation}
\end{lem}

{\begin{proof}
Since $\eta_j\in C_c^\infty(\rn)$ and
$f\in W^{\beta,2}(\rn)$, the multiplication property in
\cite[Theorem 6.23]{Leoni} gives $f_j\in W^{\beta,2}(\rn)$. Moreover, $f_j$ has compact support, which yields $f_j\in L^1(\rn)$.

As $f_j\leq f$ and $f_j\to f$ pointwise, the dominated convergence theorem gives $\|f_j\|_2\to\|f\|_2$. Moreover, $\|f\|_2>0$ since $f$ is non-zero. Let $g(t,u)$ be defined in \eqref{g-def}, namely
\[g(t,u)=\frac{1}{\|f\|_2^2}\int_{\mathbb R^n}f(x)f(x+tu)dx,~~u\in\sn.\]
We define $g_j(t, u)$ analogously, with $f$ replaced by $f_j$. By \eqref{expand} and Lemma \ref{0 well-defined},
\[\log\rho_{\rR_0 f_j}(u)=\int_0^1\frac{g_j(t,u)-1}{t}dt+\int_1^\infty \frac{g_j(t,u)}{t}dt+C\]
is finite almost everywhere and integrable over $\sn$, where $C$ is a constant defined in \eqref{C}.

We first consider the integral over $(1,\infty)$. Since $0\leq \eta_j(x)\leq 1$ and $\eta_j(x)=1$ for $x\in jB^n$,
\begin{equation}\label{b}
    \bar f_j(x)\bar f_j(x+tu)\le f_j(x)f_j(x+tu)\le f(x)f(x+tu),
\end{equation}
where $\bar{f}_j(x)=f(x)\chi_{jB^n}(x)$. Applying the monotone convergence theorem twice to $\bar f_j$, 
\[\lim_{j\to\infty}\int_1^\infty\frac{1}{t}\int_{\rn}\bar f_j(x)\bar f_j(x+tu)dxdt=\int_1^\infty\frac{1}{t}\int_{\rn}f(x)f(x+tu)dxdt=\|f\|_2^2\int_1^\infty\frac{g(t,u)}{t}dt.\]
Together with \eqref{b}, we have
\[\lim_{j\to\infty}\int_1^{\infty}\frac{1}{t}\int_{\rn}f_j(x)f_j(x+tu)dxdt=\int_1^\infty \frac{1}{t}\int_{\rn}f(x)f(x+tu)dxdt.\]
Since $\|f_j\|_2\rightarrow \|f\|_2$, we conclude that
\begin{equation}\label{01-point}
    \lim_{j\to\infty}\int_1^\infty\frac{g_j(t,u)}{t}dt=\int_1^\infty \frac{g(t,u)}{t}dt.
\end{equation}

Moreover, applying the monotone convergence theorem again, we have
\begin{equation*}
    \lim_{j\to\infty}\int_{\sn}\int_1^{\infty}\frac{1}{t}\int_{\rn}\bar f_j(x)\bar f_j(x+tu)dxdtdu=\int_{\sn}\int_1^\infty\frac{1}{t}\int_{\rn}f(x)f(x+tu)dxdtdu.
\end{equation*}
By the same argument, using \eqref{b} and the convergence $\|f_j\|_2\to\|f\|_2$, we obtain that
\begin{equation}\label{01-int}
    \lim_{j\to\infty}\int_{\sn}\int_1^\infty\frac{g_j(t,u)}{t}dtdu=\int_{\sn}\int_1^\infty \frac{g(t,u)}{t}dtdu.
\end{equation}

We next consider the integral over $(0,1)$. Let $\mathcal{E}_u(f)$ be defined in \eqref{qu}. Since $f \in W^{\beta,2}(\rn)$, by the first step in the proof of Lemma \ref{0 well-defined}, we have, for all $u\in\sn$, 
\[\int_0^1\frac{g(t,u)-1}{t}dt=-\frac{1}{2\|f\|_2^2}\int_0^1\frac{1}{t}\int_{\rn}|f(x)-f(x+tu)|^2dxdt=-\frac{1}{2\|f\|_2^2}\mathcal{E}_u(f)>-\infty.\]
Since $\|f_j\|_2\to\|f\|_2$, it remains to prove $\lim_{j\rightarrow\infty}\mathcal{E}_u(f_j)=\mathcal{E}_u(f)$. By  the triangle inequality \eqref{triangle}, we have
\begin{equation}\label{triangle-extend}
    \begin{aligned}
        |\mathcal{E}_u(f_j)-\mathcal{E}_u(f)|&= |\mathcal{E}_u(f_j)^{1/2}-\mathcal{E}_u(f)^{1/2}|\cdot |\mathcal{E}_u(f_j)^{1/2}+\mathcal{E}_u(f)^{1/2}|\\
        &\leq \mathcal{E}_u(f_j-f)^{1/2}\Big(\mathcal{E}_u(f_j-f)^{1/2}+2\mathcal{E}_u(f)^{1/2}\Big)\\&=\mathcal{E}_u(f_j-f)+2\mathcal{E}_u(f)^{1/2}\mathcal{E}_u(f_j-f)^{1/2}
    \end{aligned}
\end{equation}
Then it suffices to show $\mathcal{E}_u(f_j-f)\to0$.

Let $h_j=f_j-f$. Then
\[
\begin{aligned}
h_j(x)-h_j(x+tu)
=(\eta_j(x)-1)\big(f(x)-f(x+tu)\big)+ f(x+tu)\big(\eta_j(x)-\eta_j(x+tu)\big).
\end{aligned}
\]
The inequality $(a+b)^2\leq 2a^2+2b^2$ implies that
\[\mathcal{E}_u(h_j)\le 2{\mathrm I}_j(u)+2{\mathrm{II}}_j(u),\]
where
\[\mathrm I_j(u)=\int_0^1\frac{1}{t}\int_{\rn}|\eta_j(x)-1|^2|f(x)-f(x+tu)|^2dxdt\]
and
\[
\mathrm{II}_j(u)=
\int_0^1\frac{1}{t}\int_{\rn}
|f(x+tu)|^2|\eta_j(x)-\eta_j(x+tu)|^2dxdt .
\]

Since $|\eta_j-1|\rightarrow0$ and $|\eta_j-1|\leq 1$, we have
\[\frac{1}{t}|\eta_j(x)-1|^2|f(x)-f(x+tu)|^2\leq \frac{1}{t}|f(x)-f(x+tu)|^2,\]
where the right-hand side is integrable and its integral equals $\mathcal{E}_u(f) < \infty$. Therefore, by the dominated convergence theorem, $\mathrm I_j(u)\to0$ for all $u$. 

On the other hand, since $\eta\in C_c^{\infty}(\rn)$, there is a constant $M$ such that $\|\nabla\eta_j\|_{\infty}\le \frac{M}{j}$. Then
\[|\eta_j(x)-\eta_j(x+tu)|
\le \frac{M}{j}t .\]
Therefore
\[\mathrm{II}_j(u)\le\frac{M^2}{j^2}\int_0^1 t\int_{\mathbb R^n}|f(x+tu)|^2dxdt=\frac{M^2}{2j^2}\|f\|_2^2\to0.\]
Thus $\mathcal{E}_u(f_j-f)=\mathcal{E}_u(h_j)\to0$. Combining this with
\eqref{01-point}, we obtain \eqref{pointwise}.

To obtain \eqref{mono2}, it remains to prove
\begin{equation*}
    \lim_{j\to\infty}\int_{\sn}\mathcal{E}_u(f_j)du=\int_{\sn}\mathcal{E}_u(f)du,
\end{equation*}
since $\|f_j\|_2\rightarrow\|f\|_2$. By \eqref{triangle-extend}, we have
\begin{equation}\label{step-triangle}
    \begin{aligned}
        |\mathcal{E}_u(f_j)-\mathcal{E}_u(f)|\leq \mathcal{E}_u(f_j-f)+2\mathcal{E}_u(f)^{1/2}\mathcal{E}_u(f_j-f)^{1/2}.
    \end{aligned}
\end{equation}
H\"older's inequality implies that
\begin{equation}\label{Holder}
    \int_{\sn}\mathcal{E}_u(f)^{1/2}\mathcal{E}_u(f_j-f)^{1/2}du\leq \Big(\int_{\sn}\mathcal{E}_u(f)du\Big)^{1/2}\Big(\int_{\sn} \mathcal{E}_u(f_j-f)du\Big)^{1/2}.
\end{equation}
By formula \eqref{finite-01}, $\int_{\sn}\mathcal{E}_u(f)du$ is finite. Together with \eqref{step-triangle} and \eqref{Holder}, it suffices to prove $\int_{\sn}\mathcal{E}_u(f_j-f)du\to 0$.

By the same estimates for $\mathrm I_j(u)$ obtained above, we have
\[\mathrm{I}_j(u)\rightarrow 0,\quad\quad \mathrm I_j(u)\leq \mathcal{E}_u(f).\]
Then the dominated convergence theorem implies that $\int_{\sn}\mathrm{I}_j(u)du\to0$. Moreover, the estimate $\mathrm{II}_j(u)\leq \frac{M^2}{2j^2}\|f\|_2^2$ implies that $\int_{\sn}\mathrm{II}_j(u)du\rightarrow0$. Therefore
\[\int_{\sn}\mathcal{E}_u(f_j-f)du\leq 2\int_{\sn}\mathrm{I}_j(u)du+2\int_{\sn}\mathrm{II}_j(u)du\to 0.\]
Combining this convergence with \eqref{01-int}, we obtain \eqref{mono2}.
\end{proof}}

We also need the basic result: for continuous $F:[0,\infty)\rightarrow (0,\infty)$,
\begin{equation}\label{fund}
\lim_{\alpha\rightarrow 0^+}\frac{F(\alpha)^{\alpha}-1}{\alpha}=\log F(0).    
\end{equation}

We are now in the position to prove the main result in this chapter, the  limiting case ($\alpha\rightarrow 0^+$) of the affine HLS inequalities. We first provide  another form of Beckner's logarithmic Sobolev inequality.

Note that by Lemma \ref{0 well-defined}, the integral of $\log \rho_{\rR_0f}$ is finite for non-negative, non-zero $f\in W^{\beta,2}(\rn)\cap L^1(\rn)$. For the integral of $|\hat{f}(\xi)|^2\log |\xi|$, by Jensen's inequality, we have
\[\int_{\rn}|\hat{f}(\xi)|^2\log|\xi|d\xi=\frac{1}{\alpha}\int_{\rn}|\hat{f}(\xi)|^2\log|\xi|^{\alpha}d\xi\leq \frac{\|f\|_2^2}{\alpha}\log \Big(\frac{1}{\|f\|_2^2}\int_{\rn}|\hat{f}(\xi)|^2|\xi|^{\alpha}d\xi\Big),\]
where $\alpha\in(0,2\beta)$. Since $f\in W^{\beta,2}(\rn)$, we then have $\int_{\rn}|\hat{f}(\xi)|^2|\xi|^{\alpha}d\xi<+\infty$.

By Lemma \ref{lim}, $f\in L^{\frac{2n}{n+\alpha^{\prime}}}(\rn)$ for some $\alpha^{\prime}\in(0,n)$. The HLS inequality and \eqref{riesz potential} imply that $|\hat{f}(\xi)|^2|\xi|^{-\alpha^{\prime}}\in L^1(\rn)$. It follows from Jensen's inequality that
\[-\infty<-\frac{\|f\|_2^2}{\alpha^{\prime}}\log \Big(\frac{1}{ \|f\|_2^2}\int_{\rn}|\hat f(\xi)|^2|\xi|^{-\alpha^{\prime}}d\xi\Big)\leq \int_{\rn}|\hat{f}(\xi)|^2\log|\xi|d\xi.\]
Thus the integral of $|\hat{f}(\xi)|^2\log |\xi|$ is finite. Then the following result implies another form of Beckner's logarithmic Sobolev inequality.

\begin{prop}\label{eqiv}
    For $\beta\in (0,\frac{1}{2})$, let $f\in W^{\beta,2}(\rn)\cap L^1(\rn)$ be non-negative,  non-zero. Then
    \[-\frac{1}{n\omega_n}\int_{\sn}\log \rho_{\rR_0 f}(u)du=\frac{1}{\|f\|_2^2}\int_{\rn}|\hat{f}(\xi)|^2\log|\xi|d\xi+\frac{1}{2}\Big(\psi(\frac{n}{2})-\psi(1))\Big).\]
\end{prop}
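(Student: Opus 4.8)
The plan is to recover the identity by differentiating, at $\alpha=0^{+}$, a closed form for $h(\alpha):=\int_{\sn}\rho_{\rR_\alpha f}(u)^{\alpha}\,du$ obtained by combining the normalization of the radial mean body with the Riesz potential representation \eqref{riesz potential}. Fix a small $\alpha>0$; by Lemma \ref{lim} and the sharp HLS inequality all quantities below are finite. Starting from $\rho_{\rR_\alpha f}(u)^{\alpha}=\tfrac{\alpha}{\|f\|_2^{2}}\rho_{{\rm S}_\alpha f}(u)^{\alpha}$, integrating over $\sn$, and passing to polar coordinates in the difference variable $x-y$,
\[h(\alpha)=\frac{\alpha}{\|f\|_2^{2}}\di\frac{f(x)f(y)}{|x-y|^{n-\alpha}}\,dx\,dy,\]
and rewriting the double integral by \eqref{riesz potential} exhibits $h(\alpha)$ as an explicit product of a power of $\pi$, gamma factors, and $\int_{\rn}|\hat f(\xi)|^{2}|\xi|^{-\alpha}\,d\xi$. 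Since $\int_{\rn}|\hat f(\xi)|^{2}\,d\xi=\|f\|_2^{2}$ and $\Gamma(1)=1$, this already gives $h(0)=2\pi^{n/2}/\Gamma(\tfrac n2)=n\omega_n$.

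For the right-hand side of the Proposition I would compute $h'(0)$ by logarithmic differentiation of this closed form, using $\tfrac{d}{d\alpha}\log\Gamma(\tfrac\alpha2+1)\big|_{0}=\tfrac12\psi(1)$, $\tfrac{d}{d\alpha}\log\Gamma(\tfrac{n-\alpha}2)\big|_{0}=-\tfrac12\psi(\tfrac n2)$, and $\tfrac{d}{d\alpha}\int_{\rn}|\hat f(\xi)|^{2}|\xi|^{-\alpha}\,d\xi\big|_{0}=-\int_{\rn}|\hat f(\xi)|^{2}\log|\xi|\,d\xi$; together with $\psi(1)=-\gamma$ this yields the constants of the statement. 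The delicate point is the differentiation under the $\xi$-integral at the endpoint: for $|\xi|\ge1$ one has $0\le\alpha^{-1}\bigl(1-|\xi|^{-\alpha}\bigr)\le\log|\xi|\le C|\xi|^{2\beta}$, which is integrable against $|\hat f|^{2}$ because $f\in W^{\beta,2}(\rn)$, while for $|\xi|<1$ the difference quotient $\alpha^{-1}(|\xi|^{-\alpha}-1)$ is increasing in $\alpha$ and hence dominated on $(0,\alpha_0)$ by $\alpha_0^{-1}(|\xi|^{-\alpha_0}-1)$, which is integrable against $|\hat f|^{2}$ since the double integral — and therefore $\int_{\rn}|\hat f|^{2}|\xi|^{-\alpha_0}\,d\xi$ — is finite for $\alpha_0$ small by Lemma \ref{lim}; this is where the well-definedness of $\int_{\rn}|\hat f|^{2}\log|\xi|\,d\xi$ recorded just above the Proposition is used.

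For the left-hand side I would first check that $\rho_{\rR_\alpha f}(u)\to\rho_{\rR_0 f}(u)$ for a.e.\ $u$ as $\alpha\to0^{+}$ — this is Lemma \ref{conti for cs} when $f$ has compact support — so that \eqref{fund} gives $\alpha^{-1}(\rho_{\rR_\alpha f}(u)^{\alpha}-1)\to\log\rho_{\rR_0 f}(u)$. Writing $\alpha^{-1}(\rho_{\rR_\alpha f}(u)^{\alpha}-1)=\bigl(\tfrac1\alpha\rho_{\rR_\alpha f}(u)^{\alpha}-\Gamma(\alpha)\bigr)+\bigl(\Gamma(\alpha)-\tfrac1\alpha\bigr)$ and estimating the first bracket, which equals $\int_0^{\infty}t^{\alpha-1}(g_u(t)-e^{-t})\,dt$ with $g_u(t)=\tfrac1{\|f\|_2^{2}}\int_{\rn}f(x)f(x+tu)\,dx$, exactly as in the proof of Lemma \ref{0 well-defined} — the contribution of $(0,1)$ being bounded below by $-\tfrac1{2\|f\|_2^{2}}\rho_{\Pi_2^{*,\beta}f}(u)^{-2\beta}$ and that of $(1,\infty)$ bounded above by $\tfrac1{\|f\|_2^{2}}\rho_{{\rm S}_{\alpha_0}f}(u)^{\alpha_0}$, both of which lie in $L^1(\sn)$, while the two remaining pieces involve only $e^{-t}$ and are bounded uniformly in $\alpha\in(0,\alpha_0)$ — furnishes a dominating function in $L^1(\sn)$, uniform in $\alpha$. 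Dominated convergence then yields
\[\int_{\sn}\log\rho_{\rR_0 f}(u)\,du=\lim_{\alpha\to0^{+}}\int_{\sn}\frac{\rho_{\rR_\alpha f}(u)^{\alpha}-1}{\alpha}\,du=\lim_{\alpha\to0^{+}}\frac{h(\alpha)-n\omega_n}{\alpha}=h'(0),\]
using $\int_{\sn}1\,du=n\omega_n$ and $h(0)=n\omega_n$. For a general non–compactly supported $f\in W^{\beta,2}(\rn)$ one runs this argument for $f_j=f\chi_{B_j}$ and then lets $j\to\infty$, the left side converging by Lemma \ref{mono conv} and the right side by the $L^{2}$- and $W^{\beta,2}$-convergence of $f_j$ to $f$. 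Equating the two expressions for $h'(0)$, dividing by $-n\omega_n$, and simplifying gives the asserted identity.

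The main obstacle is exactly these endpoint limit interchanges — under the $\xi$-integral in the second step and under the spherical integral in the third — which are subtle because both $\log\rho_{\rR_0 f}(u)$ and $\int_{\rn}|\hat f(\xi)|^{2}\log|\xi|\,d\xi$ are defined only as differences of two a priori possibly infinite pieces (small $t$ versus large $t$; low frequency versus high frequency), so each piece has to be dominated separately. This is precisely the place where the hypothesis $f\in W^{\beta,2}(\rn)$, rather than merely $f\in L^{2}(\rn)$, is needed.
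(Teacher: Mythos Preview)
Your proposal is correct and follows essentially the same route as the paper: both start from the identity
\[
\int_{\sn}\rho_{\rR_\alpha f}(u)^{\alpha}\,du
=\frac{\alpha}{\|f\|_2^{2}}\di\frac{f(x)f(y)}{|x-y|^{n-\alpha}}\,dx\,dy
=2\pi^{n/2}\,\frac{\Gamma(\tfrac{\alpha}{2}+1)}{\Gamma(\tfrac{n-\alpha}{2})}\int_{\rn}|\hat f(\xi)|^{2}|\xi|^{-\alpha}\,d\xi,
\]
subtract $n\omega_n$, divide by $\alpha$, and pass to the limit $\alpha\to 0^{+}$ on each side separately; the compact-support reduction followed by Lemma~\ref{mono conv} is also the same. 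The only presentational difference is in the spherical limit: the paper invokes the uniform boundedness of $\rho_{\rR_\alpha f}(u)$ for compactly supported $f$ (recorded right after Lemma~\ref{conti for cs}) to get a dominating function, whereas you supply the more explicit $L^1(\sn)$ majorant coming from the bounds of Lemma~\ref{0 well-defined}; either works once $f$ has compact support. On the Fourier side the paper uses monotone convergence on the two regions $B_n$ and $\rn\setminus B_n$, which is the same monotonicity you use to produce a dominating function.
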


\begin{proof}
    By the fractional Laplacian and the definition of $\Ra f$, 
    \begin{equation}\label{lim2}
        \alpha\di\frac{f(x)f(y)}{|x-y|^{n-\alpha}}dxdy=\frac{\alpha\pi^{\frac{n}{2}-\alpha}\Gamma(\frac{\alpha}{2})}{\Gamma(\frac{n-\alpha}{2})}\int_{\rn}|\xi|^{-\alpha}|\hat{f}(\xi)|^2d\xi=\|f\|_2^2\int_{\sn}\rho_{\Ra f}(u)^{\alpha}du.
    \end{equation}
    Let $c_{n,\alpha}$ be the constant coefficient in the second term, where $\lim_{\alpha\rightarrow 0}c_{n,\alpha}=n\omega_n$. We have
    \begin{equation}\label{eq}
        c_{n,\alpha}\int_{\rn}|\hat{f}(\xi)|^2\cdot \frac{|\xi|^{-\alpha}-1}{\alpha}d\xi+\|f\|_2^2\cdot \frac{c_{n,\alpha}-n\omega_n}{\alpha}=\|f\|_2^2\int_{\sn}\frac{\rho_{\Ra f}(u)^{\alpha}-1}{\alpha} du
    \end{equation}
    
    We first consider $f$ with compact support. By Case 1 in the proof of Lemma \ref{conti for cs}, there is a uniform constant $c_1>0$ such that $\rho_{\Ra f}(u)\leq c_1$ for all $u\in\sn$. Moreover, since $e^{t}-1\ge t$ for all $t\in\tr$, we have
    \[\log\rho_{\Ra f}(u)\leq \frac{\rho_{\Ra f}(u)^{\alpha}-1}\alpha\leq \frac{c_1^{\alpha}-1}{\alpha}.\]
    By Lemma \ref{0 well-defined}, there is a uniform lower bound for $\log\rho_{\rR_0f}$. {By the proof of Lemma \ref{conti for cs}, $\lim_{\alpha\rightarrow 0^+}\rho_{\Ra f}(u)=\rho_{\rR_0f}(u)$ uniformly for $u\in \sn$}. For sufficiently small $\alpha$, there is $c_2$ such that $\log\rho_{\Ra f}(u)\ge c_2$ for all $u\in\sn$ and $\alpha>0$. 
    Then the dominated convergence theorem, together with \eqref{fund}, implies that
    \[\lim_{\alpha\rightarrow 0^+}\int_{\sn}\frac{\rho_{\Ra f}(u)^{\alpha}-1}{\alpha} du=\int_{\sn}\log\rho_{\rR_0 f}(u)du.\]

    For the left-hand side of \eqref{eq}, note that $\frac{d}{d\alpha}|_{\alpha=0}c_{n,\alpha}=\frac{n\omega_n}{2}(\psi(1)-\psi(\frac{n}{2}))$, and
    \[\int_{\rn}|\hat{f}(\xi)|^2\cdot\frac{|\xi|^{-\alpha}-1}{\alpha}d\xi=\int_{B^n}|\hat{f}(\xi)|^2\cdot\frac{|\xi|^{-\alpha}-1}{\alpha}d\xi+\int_{\rn\backslash B^n}|\hat{f}(\xi)|^2\cdot\frac{|\xi|^{-\alpha}-1}{\alpha}d\xi.\]    
    where $\frac{|\xi|^{-\alpha}-1}{\alpha}$ is monotone decreasing with respect to $\alpha$. Therefore, for sufficiently small $\alpha>0$,
    \[\frac{|\xi|^{-\alpha^{\prime}}-1}{\alpha^{\prime}}\leq \frac{|\xi|^{-\alpha}-1}{\alpha}\leq \log|\xi|,~~\xi\in B^n,\]
    with $\alpha^{\prime}\in(0,n)$ such that $f\in L^{\frac{2n}{n+\alpha^{\prime}}}(\rn)$, and
    \[0<\frac{|\xi|^{-\alpha}-1}{\alpha}\leq \log|\xi|,~~\xi\in \rn\backslash B^n.\] 
    Hence the dominated convergence theorem implies that
    \begin{equation}\label{lim1}
        \lim_{\alpha\rightarrow 0^+}\int_{\rn}|\hat{f}(\xi)|^2\cdot\frac{|\xi|^{-\alpha}-1}{\alpha}d\xi=-\int_{\rn}|\hat{f}(\xi)|^2\log|\xi|d\xi,
    \end{equation}
    which yields
    \[\frac{1}{2}\Big(\psi(\frac{n}{2})-\psi(1))\Big)+\frac{1}{\|f\|_2^2}\int_{\rn}|\hat{f}(\xi)|^2\log|\xi|d\xi=-\frac{1}{n\omega_n}\int_{\sn}\log\rho_{\rR_0 f}(u)du.\]
    
    For general $f$, let $f_j$ be defined in Lemma \ref{mono conv}, which implies that $f_j\in W^{\beta,2}(\rn)\cap L^1(\rn)$. By Lemma \ref{mono conv} and the fact that $\|\hat{f_j}\|_2=\|f_j\|_2$, it suffices to prove
    \[\lim_{i\rightarrow\infty}\int_{\rn}|\hat{f_j}(\xi)|^2\log|\xi|d\xi=\int_{\rn}|\hat{f}(\xi)|^2\log|\xi|d\xi.\]

    Note that the convergence in \eqref{lim1} holds for general $f$ without compact support and therefore does not rely on ${\rm supp}f$. By \eqref{lim2} and \eqref{lim} and $\|\hat{f_j}\|_2=\|f_j\|_2$, for arbitrary $\epsilon>0$, 
    \begin{equation*}
        \begin{aligned}
            \frac{1}{c_{n,\alpha}}\di &\frac{f_j(x)f_j(y)}{|x-y|^{n-\alpha}}dxdy-\frac{\|f_j\|_2^2}{\alpha}-\epsilon\\
            &\leq -\int_{\rn}|\hat{f_j}(\xi)|^2\log|\xi|d\xi\leq \frac{1}{c_{n,\alpha}}\di \frac{f_j(x)f_j(y)}{|x-y|^{n-\alpha}}dxdy-\frac{\|f_j\|_2^2}{\alpha}+\epsilon
        \end{aligned}
    \end{equation*}
    holds for sufficiently small $\alpha$.    Letting $j\rightarrow \infty$ and applying \eqref{lim2} again, we obtain
    \begin{equation*}
    \begin{aligned}
        \int_{\rn}|\hat{f}(\xi)|^{2}\cdot\frac{|\xi|^{-\alpha}-1}{\alpha}d\xi-\epsilon\leq        -\lim_{j\rightarrow\infty}\int_{\rn}|\hat{f_j}(\xi)|^2\log |\xi|d\xi\leq \int_{\rn}|\hat{f}(\xi)|^{2}\cdot\frac{|\xi|^{-\alpha}-1}{\alpha}d\xi+\epsilon.
    \end{aligned}
    \end{equation*}
    We then conclude the proof.
\end{proof}

We are now in a position to prove the following Beckner-type logarithmic Sobolev inequality, with sharp constants and extremal functions to be characterized in Section 5.

\begin{thm}\label{main 2}
    For $\beta\in(0,\frac{1}{2})$, let $f\in W^{\beta,2}(\rn)$ be a non-negative function such that $\|f\|_2=1$ and  
 $\int_{\rn}f(x)^2\log f(x)dx$ is finite. Then
    \begin{align}\label{ineq 0}
    \gamma_0-\frac{2}{n}\int_{\rn}f(x)^2\log f(x)dx\ge\frac{1}{n}\log\Big(\frac{|{\rm R}_0f|}{\omega_n}\Big)\ge\frac{1}{n\omega_n}\int_{\sn}\log\rho_{{\rm R}_0f}(u)du,
    \end{align}
    where $\gamma_0$ is the derivative of $\frac{1}{n\omega_n}\alpha\oc$ at $\alpha=0$, and
    \[\gamma_0=-\frac{1}{2}\log\pi+\frac{1}{n}\log\frac{\Gamma(n)}{\Gamma(\frac{n}{2})}+\frac{1}{2}\left(\psi(1) -\psi(\frac{n}{2})\right).\]
\end{thm}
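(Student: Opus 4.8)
I would split the statement in Theorem~\ref{main 2} into its two inequalities and handle them by entirely different mechanisms: the second (rightmost) inequality is a direct application of the logarithmic dual mixed volume inequality \eqref{dual mix vol ineq-log}, while the first (leftmost) inequality is extracted as the limit $\alpha\to 0^+$ of the affine HLS inequality \eqref{aff HLS}. Before either, I would record that the stated value of $\gamma_0$ is a routine differentiation of \eqref{const 1}: using $\alpha\Gamma(\tfrac{\alpha}{2})=2\Gamma(1+\tfrac{\alpha}{2})$ one gets $\tfrac{1}{n\omega_n}\alpha\oc=\tfrac{2\Gamma(1+\alpha/2)\,\pi^{(n-\alpha)/2}}{n\omega_n\,\Gamma((n+\alpha)/2)}\big(\tfrac{\Gamma(n)}{\Gamma(n/2)}\big)^{\alpha/n}$, which equals $1$ at $\alpha=0$ and whose logarithmic derivative at $\alpha=0$ is $-\tfrac12\log\pi+\tfrac1n\log\tfrac{\Gamma(n)}{\Gamma(n/2)}+\tfrac12(\psi(1)-\psi(\tfrac n2))=\gamma_0$; this also reproves $\lim_{\alpha\to0^+}\alpha\oc=n\omega_n$ used in Lemma~\ref{lim}.

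For the second inequality: since $|\rR_0 f|=\tfrac1n\int_{\sn}\rho_{\rR_0 f}(u)^n\,du$ and $\omega_n=\tfrac1n\int_{\sn}1\,du$, the number $|\rR_0 f|/\omega_n$ is the mean of $\rho_{\rR_0 f}^n$ against the probability measure $du/(n\omega_n)$ on $\sn$, so applying \eqref{dual mix vol ineq-log} with $K=B_n$, $L=\rR_0 f$ — equivalently, Jensen's inequality for the concave function $\log$ — yields $\tfrac1n\log(|\rR_0 f|/\omega_n)\ge\tfrac{1}{n\omega_n}\int_{\sn}\log\rho_{\rR_0 f}(u)\,du$. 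By Lemma~\ref{0 well-defined}, $\rho_{\rR_0 f}$ is finite for almost every $u$, so the right side is well defined in $[-\infty,\infty)$ and the inequality is meaningful once $|\rR_0 f|<\infty$ is known, which I obtain in the course of the first inequality's analysis below.

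For the first inequality I would first treat compactly supported $f$. Since $\|f\|_2=1$, the normalization $\rho_{\rR_{\alpha}f}(u)=(\alpha/\|f\|_2^2)^{1/\alpha}\rho_{\Sa}(u)=\alpha^{1/\alpha}\rho_{\Sa}(u)$ gives $|\Sa|^{\alpha/n}=\alpha^{-1}|\rR_{\alpha}f|^{\alpha/n}$, so \eqref{aff HLS} rewrites, for $\alpha\in(0,n)$, as
\[
\frac{\alpha\,\oc}{n\omega_n}\,\|f\|_{\frac{2n}{n+\alpha}}^{2}\ \ge\ \Big(\frac{|\rR_{\alpha}f|}{\omega_n}\Big)^{\alpha/n}.
\]
By Lemma~\ref{lim}, $\alpha\oc\to n\omega_n$ and $\|f\|_{2n/(n+\alpha)}\to\|f\|_2=1$, and by Corollary~\ref{conti vol}, $|\rR_{\alpha}f|\to|\rR_0 f|\in(0,\infty)$; hence both sides tend to $1$ as $\alpha\to0^+$. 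Subtracting $1$ and dividing by $\alpha>0$ preserves the inequality, so it suffices to compute the one-sided limits of the two difference quotients. Writing the left side as $A(\alpha)B(\alpha)$ with $A(\alpha)=\tfrac{\alpha\oc}{n\omega_n}$, $B(\alpha)=\|f\|_{2n/(n+\alpha)}^{2}$, $A(0)=B(0)=1$, one gets $\lim_{\alpha\to0^+}\tfrac{A(\alpha)B(\alpha)-1}{\alpha}=A'(0)+B'(0)=\gamma_0-\tfrac2n\int_{\rn}f(x)^2\log f(x)\,dx$, where $B'(0)=-\tfrac2n\int_{\rn}f^2\log f$ comes from expanding $\|f\|_q^2-1$ with $q=\tfrac{2n}{n+\alpha}$ (so $q-2=\tfrac{-2\alpha}{n+\alpha}$), reducing to $\lim_{\alpha\to0^+}\int_{\rn}f^2\,\tfrac{f^{q-2}-1}{\alpha}$, and splitting $\rn$ into $\{f\ge1\}$ and $\{f<1\}$ to apply dominated convergence on the former and monotone convergence on the latter — here finiteness of $\int_{\rn}f^2\log f$ is used. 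On the right, \eqref{fund} applied to $F(\alpha)=(|\rR_{\alpha}f|/\omega_n)^{1/n}$, continuous and positive on $[0,\infty)$ by Corollary~\ref{conti vol}, gives $\lim_{\alpha\to0^+}\tfrac{(|\rR_{\alpha}f|/\omega_n)^{\alpha/n}-1}{\alpha}=\tfrac1n\log(|\rR_0 f|/\omega_n)$. Passing to the limit yields $\tfrac1n\log(|\rR_0 f|/\omega_n)\le\gamma_0-\tfrac2n\int_{\rn}f^2\log f$ for compactly supported $f$.

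For general non-negative $f\in W^{\beta,2}(\rn)$ with $\|f\|_2=1$, I would set $f_j=f\chi_{B_j}$ and $g_j=f_j/\|f_j\|_2$; then $g_j\in W^{\beta,2}(\rn)$ is compactly supported with $\|g_j\|_2=1$, and since the defining integral for $\rho_{\rR_0}$ is unchanged under this rescaling, $\rho_{\rR_0 g_j}=\rho_{\rR_0 f_j}$. The compact-support case gives $\tfrac1n\log(|\rR_0 f_j|/\omega_n)\le\gamma_0-\tfrac2n\int_{\rn}g_j^2\log g_j$. As $j\to\infty$, $\|f_j\|_2\to1$ (monotone convergence) and $\int_{B_j}f^2\log f\to\int_{\rn}f^2\log f$ (dominated convergence, using finiteness of $\int_{\rn}f^2(\log f)^{\pm}$), so $\int_{\rn}g_j^2\log g_j\to\int_{\rn}f^2\log f$; in particular $\sup_j|\rR_0 f_j|<\infty$. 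By Lemma~\ref{mono conv}, $\rho_{\rR_0 f_j}(u)\to\rho_{\rR_0 f}(u)$ for almost every $u$, so Fatou's lemma gives $|\rR_0 f|=\tfrac1n\int_{\sn}\rho_{\rR_0 f}^n\le\liminf_j\tfrac1n\int_{\sn}\rho_{\rR_0 f_j}^n=\liminf_j|\rR_0 f_j|<\infty$; combining, $\tfrac1n\log(|\rR_0 f|/\omega_n)\le\liminf_j\tfrac1n\log(|\rR_0 f_j|/\omega_n)\le\gamma_0-\tfrac2n\int_{\rn}f^2\log f$, which is the first inequality (and legitimizes the second, since now $|\rR_0 f|<\infty$). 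The main obstacle, I expect, is this limiting argument — specifically the two interchanges of limit and integral: (i) the computation of $B'(0)=\tfrac{d}{d\alpha}\big|_0\|f\|_{2n/(n+\alpha)}^{2}$, where the integrand $f^2\tfrac{f^{q-2}-1}{\alpha}$ has no single $L^1(\rn)$ majorant and must be controlled separately on $\{f\ge1\}$ and $\{f<1\}$ in the style of Carlen--Loss; and (ii) the continuity of $\alpha\mapsto|\rR_{\alpha}f|$ at $\alpha=0^+$, available from Corollary~\ref{conti vol} only for compactly supported $f$, which forces the two-stage argument above together with Lemma~\ref{mono conv}. Everything else is bookkeeping.
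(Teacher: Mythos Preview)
Your proposal is correct and follows essentially the same route as the paper: Jensen for the second inequality, differentiating the affine HLS inequality at $\alpha=0^+$ for compactly supported $f$ using Corollary~\ref{conti vol} and \eqref{fund}, then extending to general $f$ by truncation $f_j=f\chi_{B_j}$ together with Lemma~\ref{mono conv}. Your use of Fatou's lemma to obtain $|\rR_0 f|\le\liminf_j|\rR_0 f_j|$ (which is all that is needed) is a mild simplification over the paper, which instead asserts the full limit $|\rR_0 f_j|\to|\rR_0 f|$ by appealing to the method of proof of Lemma~\ref{mono conv}.
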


\begin{proof}
    The second inequality follows directly from Jensen's inequality. We focus on the first inequality.

    We first assume that $f$ is compactly supported. Then $f\in W^{\beta,2}(\rn)\cap L^1(\rn)$.  By Lemma \ref{lim}, we can subtract $n\omega_n$ from each term in the affine HLS inequalities and divide by $n\omega_n\alpha$. Then we consider the limit of each part as $\alpha$ goes to $0^+$. 

    For the first part we have
    \begin{align*}
    \lim_{\alpha\rightarrow 0^+}&\frac{\alpha\gamma_{n,\alpha}\|f\|^2_{\frac{2n}{n+\alpha}}-n\omega_n}{n\omega_n\alpha}\\&\qquad=\lim_{\alpha\rightarrow 0^+}\frac{1}{n\omega_n}\cdot\frac{\alpha\gamma_{n,\alpha}-n\omega_n}{\alpha}+\lim_{\alpha\rightarrow 0^+}\frac{\|f\|_{\frac{2n}{n+\alpha}}^2-1}{\alpha}\\
    &\qquad=\gamma_0+\lim_{\alpha\rightarrow 0^+}\left(\int_{\rn}f(x)^{\frac{2n}{n+\alpha}}dx\cdot \frac{(\int_{\rn}f(x)^{\frac{2n}{n+\alpha}}dx)^{\frac{\alpha}{n}}-1}{\alpha}+\frac{\int_{\rn}f(x)^{\frac{2n}{n+\alpha}}dx-1}{\alpha}\right)\\
    &\qquad=\gamma_0+\lim_{\alpha\rightarrow 0^+}\frac{\int_{\rn}f(x)^{\frac{2n}{n+\alpha}}dx-1}{\alpha},
    \end{align*}
    where the last step uses \eqref{fund}.
    
    Note that $\|f\|_2=1$. 
    Since
    \[\frac{f(x)^{\frac{2n}{n+\alpha}}-f(x)^2}{\alpha}=-\frac{2}{n+\alpha}\cdot\frac{f(x)^{\frac{2n}{n+\alpha}}-f(x)^2}{\frac{2n}{n+\alpha}-2},\]
    and $(p-2)^{-1}(f^p-f^2)$ is monotone increasing when $p\rightarrow 2^-$, by the monotone convergence theorem, we have
    \[\lim_{\alpha\rightarrow 0^+}\int_{\rn}\frac{f(x)^{\frac{2n}{n+\alpha}}-f(x)^2}{\alpha}dx=-\frac{2}{n}\int_{\rn}f(x)^2\log f(x)dx,\]
    which yields the first part.

    Then we consider the second part. Recall that $\Rf=\Big(\frac{\alpha}{\|f\|_2^2}\Big)^{1/\alpha}\Sa=\alpha^{\frac{1}{\alpha}}\Sa$, and therefore 
    \[\alpha n\omega_n^{\frac{n-\alpha}{n}}|\Sa|^{\frac{\alpha}{n}}=n\omega_n^{\frac{n-\alpha}{n}}|\Rf|^{\frac{\alpha}{n}}.\]
    Applying Corollary \ref{conti vol} and \eqref{fund}, we conclude the proof by 
    \begin{align*}
    \lim_{\alpha\rightarrow 0^+}\frac{\alpha n\omega_n^{\frac{n-\alpha}{n}} |\Sa|^{\frac{\alpha}{n}}-n\omega_n}{n\omega_n\alpha}=\lim_{\alpha\rightarrow 0}\frac{n\omega_n^{\frac{n-\alpha}{n}}-n\omega_n}{n\omega_n\alpha}+\lim_{\alpha\rightarrow 0}\frac{|\Rf|^{\frac{\alpha}{n}}-1}{n\omega_n\alpha}=\frac{1}{n}\log\Big(\frac{|\rR_0 f|}{\omega_n}\Big).
    \end{align*}
    
    For general $f\in W^{\beta, 2}(\rn)$, let $f_j$ be defined in Lemma \ref{mono conv}. Then $f_j\in W^{\beta,2}(\rn)\cap L^1(\rn)$ and hence inequality \eqref{ineq 0} holds for $f_j/\|f_j\|_2$. Observing that replacing $f$ with $cf$ does not change $\rR_0 f$, we have
    \begin{equation}\label{j}
        \gamma_0+\frac{2}{n}\log\|f_j\|_2-\frac{2}{n\|f_j\|_2^2}\int_{\rn}f_j(x)^2\log f_j(x)dx\ge\frac{1}{n}\log\Big(\frac{|\rR_0 f_j|}{\omega_n}\Big)\ge\frac{1}{n\omega_n}\int_{\sn}\log\rho_{\rR_0 f_j}(u)du.
    \end{equation}
    By Fatou's lemma, Lemma \ref{mono conv} and Jensen's inequality,
    \begin{equation*}
        \begin{aligned}
            \liminf_{j\rightarrow\infty}\frac{1}{n}\log\Big(\frac{|\rR_0 f_j|}{\omega_n}\Big)\ge \frac{1}{n}\log\Big(\frac{|\rR_0 f|}{\omega_n}\Big)&\ge \frac{1}{n\omega_n}\int_{\sn}\log\rho_{\rR_0f}(u)du.
        \end{aligned}
    \end{equation*}
    Then letting $j\rightarrow\infty$ in \eqref{j} completes the proof.
    \end{proof}

        By Lemma \ref{conti for cs} and the method of the proof above, we obtain the following Corollary, which shows that the inequality \eqref{ineq 0} is also a limiting case of affine fractional $L^2$ Sobolev inequalities. 

    \begin{cor}
        Let $\beta\in(0,\frac{1}{2})$. Suppose { $f\in W^{\beta,2}(\rn)$} is a non-negative function such that $\|f\|_2=1$ and  $\int_{\rn}f(x)^2\log f(x)dx$ is finite. Then \eqref{ineq 0} is the limiting case of the affine fractional $L^2$ Sobolev inequalities, as $\alpha\rightarrow 0^-$.
    \end{cor}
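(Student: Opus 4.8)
\emph{Plan of proof.} The idea is to run the argument of Theorem \ref{main 2} with $\alpha\to0^{+}$ replaced by $\alpha\to0^{-}$ and the affine HLS inequalities replaced by the affine fractional $L^{2}$ Sobolev inequalities \eqref{aff frac l2}; the only genuinely new input is the one-sided continuity at $\alpha=0$ supplied by Lemma \ref{conti for cs} and Corollary \ref{conti vol}. First I would check that \eqref{aff frac l2} degenerates to an identity as $\alpha\to0^{-}$. Multiplying its three members by $-\alpha>0$, the right-hand member tends to $n\omega_{n}\|f\|_{2}^{2}=n\omega_{n}$ by the Ma${\rm z}^{\prime}$ya--Shaposhnikova limit recalled above; the left-hand member tends to the same value, since $(-\alpha)\,2\gamma_{n,2\alpha}\to n\omega_{n}$ (read off from the analytic continuation \eqref{gn}, using $(-\alpha)|\Gamma(\alpha)|=\Gamma(\alpha+1)\to1$) and $\|f\|_{2n/(n+2\alpha)}\to\|f\|_{2}$ (the exponent tends to $2$ from above, and $f\in W^{\beta,2}(\rn)$ lies in $L^{p}(\rn)$ for every $p\in[2,2n/(n-2\beta)]$ by the fractional Sobolev embedding, so a monotone/dominated convergence argument as in Step~1 of the proof of Lemma \ref{lim} applies); the middle member is then squeezed between the two.

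Once \eqref{aff frac l2} is an identity in the limit, I would subtract $n\omega_{n}$ from each of the quantities $(-\alpha)(\,\cdot\,)$ and divide by $2n\omega_{n}\alpha$; as $\alpha<0$, this reverses the two inequalities and brings them into the orientation of \eqref{ineq 0}. Letting $\alpha\to0^{-}$, the three limits are identified as follows. For the left-hand term, the analytic continuation \eqref{gn} makes the normalized constant $(-\alpha)\,2\gamma_{n,2\alpha}$ a smooth function of $\alpha$ near $0$ whose logarithmic derivative at $0$ is $\gamma_{0}$; combining this with the monotone-convergence computation using that $(p-2)^{-1}(f^{p}-f^{2})$ is monotone as $p\to2$, carried out exactly as in the proof of Theorem \ref{main 2}, gives $\gamma_{0}-\tfrac{2}{n}\int_{\rn}f(x)^{2}\log f(x)\,dx$. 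For the middle term I would rewrite $n\omega_{n}^{(n-2\alpha)/n}|\Pi_{2}^{*,-\alpha}f|^{2\alpha/n}$ in terms of $|\rR_{2\alpha}f|$ through the normalization of Section~2.5 and then apply Corollary \ref{conti vol} (which rests on Lemma \ref{conti for cs}): $\alpha\mapsto|\rR_{2\alpha}f|$ is positive and continuous through $\alpha=0$ with limit $|\rR_{0}f|$, so \eqref{fund} yields the limit $\tfrac{1}{n}\log(|\rR_{0}f|/\omega_{n})$. For the right-hand term I would use the Fourier/fractional-Laplacian representation \eqref{fl} together with the monotone-convergence-in-$\alpha$ argument from the proof of Proposition \ref{eqiv}; by Proposition \ref{eqiv} the limit equals $\tfrac{1}{n\omega_{n}}\int_{\sn}\log\rho_{\rR_{0}f}(u)\,du$. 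Passing to the limit in the reversed chain thus reproduces \eqref{ineq 0} term by term, the first inequality now coming from the affine fractional $L^{2}$ Sobolev inequalities and the second either from Jensen's inequality (as in Theorem \ref{main 2}) or directly from the limit of the second inequality in \eqref{aff frac l2} via Proposition \ref{eqiv}.

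It remains to remove the compact-support hypothesis required by Lemma \ref{conti for cs} (hence by Corollary \ref{conti vol}). As at the end of the proof of Theorem \ref{main 2}, I would apply the above to the truncations $f_{j}=f\cdot\chi_{B_{j}}\in W^{\beta,2}(\rn)$ (with $\|f_{j}\|_{2}\to\|f\|_{2}=1$) and let $j\to\infty$, using Lemma \ref{mono conv} — which gives $\log\rho_{\rR_{0}f_{j}}\to\log\rho_{\rR_{0}f}$ almost everywhere and in the mean over $\sn$, hence $|\rR_{0}f_{j}|\to|\rR_{0}f|$ — together with monotone convergence for the entropy term. I expect the main obstacle to be precisely the left-continuity of $\alpha\mapsto|\rR_{\alpha}f|$, and of $\alpha\mapsto\int_{\sn}\log\rho_{\rR_{\alpha}f}(u)\,du$, at $\alpha=0$: this is exactly what forces the $\alpha\to0^{-}$ limit of \eqref{aff frac l2} and the $\alpha\to0^{+}$ limit of the affine HLS inequalities to be the \emph{same} inequality \eqref{ineq 0}, reflecting that the family $\{\rR_{\alpha}f\}$ glues continuously across $\alpha=0$ (Lemma \ref{HL}, Lemma \ref{conti for cs}); its proof leans on the hypothesis $f\in W^{\beta,2}(\rn)$ to control the small-$t$ part of $\log\rho_{\rR_{0}f}$ (Lemma \ref{0 well-defined}, Lemma \ref{frac l2}) and on the affine HLS bound to control its large-$t$ part.
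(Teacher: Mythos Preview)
Your proposal is correct and follows exactly the approach the paper indicates: the paper does not give a detailed proof of this corollary but simply remarks ``By Lemma \ref{conti for cs} and the method of the proof above, we obtain the following Corollary,'' and your plan is a faithful and accurate unpacking of that hint --- rerunning the differentiation argument of Theorem \ref{main 2} with the affine fractional $L^{2}$ Sobolev inequalities \eqref{aff frac l2} in place of the affine HLS inequalities, using the Maz$'$ya--Shaposhnikova limit to see that \eqref{aff frac l2} degenerates to an identity as $\alpha\to0^{-}$, invoking the left-sided continuity of $\alpha\mapsto|\rR_{\alpha}f|$ from Corollary \ref{conti vol} for the middle term, and removing the compact-support assumption via the truncations $f_{j}=f\cdot\chi_{B_{j}}$ and Lemma \ref{mono conv}. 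Your normalization (dividing by $2n\omega_{n}\alpha$ rather than $n\omega_{n}\alpha$, to account for the replacement $\alpha\mapsto2\alpha$ in the constant $\gamma_{n,2\alpha}$) is the right bookkeeping adjustment.
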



\section{Equality cases of the affine logarithmic Sobolev inequality}

In this section, we will explore Beckner's brief remark on the optimizer of his logarithmic Sobolev inequality,
offering a more detailed explanation of his idea. Building on this foundation, we will proceed to characterize the equality cases in the affine version of the logarithmic Sobolev inequality.

The following theorem is a crucial result we need for the characterization. This theorem extends the result in \cite[Theorem 20]{HL1} by Haddad and Ludwig, which is for convex bodies $K$ and $E$ with $f=\chi_E$ in \eqref{rri v-log}.
\begin{thm}\label{main 5}
    Let $\beta\in(0,\frac{1}{2})$. Suppose $f\in W^{\beta,2}(\rn)$ is non-zero and non-negative and { $K$ is a star-shaped set with measurable radial function and $|K|>0$}. Then
    \begin{equation}\label{rri v-log}
        \tilde{V}_{\log}(K,\rR_0f)\leq \tilde{V}_{\log}(K^{\star},\rR_0 f^{\star}),
    \end{equation}
    provided both sides are finite. Equality holds if and only if 
    \[f(x)=f^{\star}(\phi^{-1}x-x_0)~~\text{and}~~K=\phi B,\]
    for some $\phi\in {\rm SL}(n)$, $x_0\in\rn$ and $B$ is a centered ball with $|B|=|K|$.
\end{thm}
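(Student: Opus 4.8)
The plan is to derive \eqref{rri v-log} from two facts that are essentially in hand---the logarithmic dual mixed volume inequality \eqref{dual mix vol ineq-log} and the volume monotonicity $|\rR_0 f|\le|\rR_0 f^{\star}|$---and then to extract the equality case from the corresponding equality conditions, supplemented by Burchard's rigidity at a fixed order $\alpha\in(0,n)$. First, since $\rR_0(cf)=\rR_0 f$ and since replacing $K$ by $cK$ shifts both sides of \eqref{rri v-log} by $\log c$, I may assume $\|f\|_2=1$ and $|K|=\omega_n$, so $K^{\star}=B_n$. As $f^{\star}$ is radial, $t\mapsto\int_{\rn}f^{\star}(x)f^{\star}(x+tu)\,dx$ is independent of $u$, hence $\rR_0 f^{\star}$ is a centered ball; thus $K^{\star}$ and $\rR_0 f^{\star}$ are dilates, equality holds in \eqref{dual mix vol ineq-log} for this pair, and $\tilde{V}_{\log}(K^{\star},\rR_0 f^{\star})=\frac1n\log(|\rR_0 f^{\star}|/|K|)$. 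Applying \eqref{dual mix vol ineq-log} to $(K,\rR_0 f)$ gives
\[
\tilde{V}_{\log}(K,\rR_0 f)\ \le\ \frac1n\log\frac{|\rR_0 f|}{|K|}\ \le\ \frac1n\log\frac{|\rR_0 f^{\star}|}{|K|}\ =\ \tilde{V}_{\log}(K^{\star},\rR_0 f^{\star}),
\]
the middle step being $|\rR_0 f|\le|\rR_0 f^{\star}|$. This last inequality is the $\alpha\to0^{+}$ limit of $|\Ra f|\le|\Ra f^{\star}|$ (\cite[Lemma 10]{HL3}): for compactly supported $f$ it follows from Corollary \ref{conti vol} applied to $f$ and to $f^{\star}$ (the latter again in $W^{\beta,2}(\rn)$, since Schwarz symmetrization does not increase the Gagliardo seminorm, cf. \cite{NPV}), and for general $f\in W^{\beta,2}(\rn)$ it follows by passing through $f_j=f\chi_{B_j}$ via Lemma \ref{mono conv}. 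This proves \eqref{rri v-log}.

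Now suppose equality holds in \eqref{rri v-log}. Then the displayed chain collapses, forcing (a) equality in \eqref{dual mix vol ineq-log} for $(K,\rR_0 f)$, i.e. $\rho_K=c\,\rho_{\rR_0 f}$ a.e. on $\sn$ for some $c>0$, and (b) $|\rR_0 f|=|\rR_0 f^{\star}|$. To analyse (b) I use that $\Ra f$ is a normalization of the star body $\Sa$: the change of variables $y=x+tu$ turns the definition of $\Sa$ into the identity (for $\alpha\in(0,n)$ and $\|f\|_2=1$)
\[
|\Ra f|\ =\ \tilde{V}_{\alpha}(\Ra f,\Ra f)\ =\ \frac{\alpha}{n}\di f(x)\,\rho_{\Ra f}(x-y)^{\,n-\alpha}\,f(y)\,dxdy .
\]
The kernel $z\mapsto\rho_{\Ra f}(z)^{n-\alpha}$ is nonnegative, its superlevel sets $s^{-1/(n-\alpha)}\Ra f$ have finite measure (as $n-\alpha>0$), and its Schwarz symmetral is $z\mapsto\rho_{(\Ra f)^{\star}}(z)^{n-\alpha}$; so Theorem \ref{RRI} reproduces $|\Ra f|\le|\Ra f^{\star}|$. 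Expanding the triple integral via $f=\int_0^{\infty}\chi_{\{f>s\}}\,ds$ and applying Theorem \ref{RRI B2} on each admissible triple of levels, one finds that equality $|\Ra f|=|\Ra f^{\star}|$ forces all superlevel sets of $f$ to be translates of a common centered ellipsoid by one and the same vector $x_0$ (the middle term, being a dilate of the fixed set $\Ra f$, pins its translation to the origin, and the constraint $c=a+b$ in Theorem \ref{RRI B2} then propagates a common translation across all levels); equivalently $f(x)=f^{\star}(\psi^{-1}x-x_0)$ for some $\psi\in{\rm SL}(n)$ and $x_0\in\rn$. This condition does not involve $\alpha$. Granting that it also holds under (b), we get $\rR_0 f=\psi\,\rR_0 f^{\star}$ from $\rR_0(f\circ\psi^{-1})=\psi\rR_0 f$, and since $\rR_0 f^{\star}$ is a centered ball, $\rR_0 f=\psi B$ with $B$ a centered ball; then (a) yields $K=c\,\rR_0 f=\psi(cB)$, i.e. $K=\phi B'$ with $\phi=\psi\in{\rm SL}(n)$, $B'$ a centered ball, and $|B'|=|K|$. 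The converse is immediate: these conditions make $\rR_0 f=\phi\,\rR_0 f^{\star}$ an ellipsoid of the same volume as $\rR_0 f^{\star}$ and $K$ a dilate of it, so both (a) and (b)---hence \eqref{rri v-log}---hold with equality.

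The step I expect to be the main obstacle is exactly the transfer of the ($\alpha$-independent) Burchard equality condition from $\alpha\in(0,n)$ down to $\alpha=0$: a plain limiting argument does not suffice, since $\tilde{V}_{\alpha}(K,\Ra f)/\tilde{V}_{\alpha}(K^{\star},\Ra f^{\star})\to1$ as $\alpha\to0^{+}$ whether or not that condition holds, so strict inequality at every positive $\alpha$ need not survive in the limit. I would instead establish the monotonicity $|\rR_0 f|\le|\rR_0 f^{\star}|$ \emph{together with} its equality characterization directly at $\alpha=0$, running the Riesz--Burchard layer-cake argument on the radial structure that defines $\rR_0 f$ itself---the route by which \cite[Theorem 20]{HL1} treats $f=\chi_E$ at $\alpha=0$---rather than by taking $\alpha\to0^{+}$; a quantitative (stability) form of Theorem \ref{RRI} yielding an $O(\alpha)$ deficit would be an alternative. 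The remaining points---the reduction to compactly supported $f$ via $f_j=f\chi_{B_j}$ and Lemma \ref{mono conv}, and $f^{\star}\in W^{\beta,2}(\rn)$---are routine given the lemmas already available.
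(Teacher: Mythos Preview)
Your route to the inequality \eqref{rri v-log} is correct but different from the paper's. You chain the logarithmic dual mixed volume inequality \eqref{dual mix vol ineq-log} with the volume monotonicity $|\rR_0 f|\le|\rR_0 f^{\star}|$, obtaining the latter as the $\alpha\to0^+$ limit of $|\Ra f|\le|\Ra f^{\star}|$ via Corollary~\ref{conti vol} and Lemma~\ref{mono conv}. The paper instead computes $\tilde V_{\log}(K,\rR_0 f)$ explicitly: after a change to polar coordinates it arrives at
\[
n|K|\,\tilde V_{\log}(K,\rR_0 f)=-\tfrac12\di|f(x)-f(y)|^2\max\{0,\|x-y\|_K^{-n}-1\}\,dxdy+\di f(x)f(y)\min\{1,\|x-y\|_K^{-n}\}\,dxdy+|K|,
\]
and then applies Riesz rearrangement to the second integral and a P\'olya--Szeg\H{o}--type result (Lemma~\ref{P-S}/Corollary~\ref{P-S-C}) to the first, each carrying Burchard's equality characterization. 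Your chain is shorter for the bare inequality; the paper's decomposition is heavier but delivers inequality and equality condition in one stroke.

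On the equality case you correctly diagnose the obstacle: Burchard's information at fixed $\alpha>0$ cannot be pushed through $\alpha\to0^+$ because the deficit vanishes in the limit. Your proposed remedy---run the Riesz--Burchard layer-cake argument directly on the structure defining $\rR_0 f$---is exactly what the paper does, and it does so for general $K$, not just $K=\rR_0 f$. The reason is structural: $|\rR_0 f|=\tfrac1n\int_{\sn}e^{n\log\rho_{\rR_0 f}}du$ does not unroll into a tractable Riesz-type double integral, whereas $\tilde V_{\log}(K,\rR_0 f)$ is \emph{linear} in $\log\rho_{\rR_0 f}$ and does; so the natural object on which to run Burchard at $\alpha=0$ is $\tilde V_{\log}(K,\rR_0 f)$ itself, and once that is done the full theorem follows. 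In the paper the logic runs opposite to yours: Theorem~\ref{main 5} is proved first and Corollary~\ref{rri vol} (your step (b), with its equality characterization) is derived from it. Thus your plan for the equality case, when fleshed out, collapses back into the paper's proof rather than shortcutting it.
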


For the proof of Theorem \ref{main 5}, we first need the following results, where $f_+$ denotes the positive part of $f$ given by $f_+(x)=\max\{0,f(x)\}$.



\begin{lem}\label{P-S}
    Let $\beta\in(0,\frac{1}{2})$. Suppose $f\in W^{\beta,2}(\rn)$ is non-zero and non-negative and { $K\subset\rn$ is star-shaped with measurable radial function.} If
    \[\di(f(x)-f(y))_+^2\cdot\max\{0,\|x-y\|_K^{-n}-1\}dxdy<\infty,\]
    then
    \begin{equation}\label{in}
        \begin{aligned}
    \di(f(x)-f(y))_+^2&\cdot\max\{0,\|x-y\|_K^{-n}-1\}dxdy\\
    &\ge\di(f^{\star}(x)-f^{\star}(y))_+^2\cdot\max\{0,\|x-y\|_{K^{\star}}^{-n}-1\}dxdy.
    \end{aligned}
    \end{equation}
    There is equality in \eqref{in} if and only if 
    \[f(x)=f^{\star}(\phi^{-1}x-x_0)~~\text{and}~~K=\phi B,\]
    for some $\phi\in{\rm SL}(n)$, $x_0\in\rn$ and $B$ is a centered ball with $|B|=|K|$.
\end{lem}

\begin{proof}
    Since 
    $\|z\|_K^{-n}=\int_0^{\infty}\chi_{t^{-\frac{1}{n}}K}(z)dt$, the kernel function is
    \[\max\{0,\|z\|_K^{-n}-1\}=\int_1^{\infty}\chi_{t^{-\frac{1}{n}}K}(z)dt,\]
    and we obtain
    \begin{align*}
    J:=\di (f(x)-f(y))_+^2&\cdot\max\{0,\|x-y\|_K^{-n}-1\}dxdy\\
    &=\int_1^{\infty}\di (f(x)-f(y))_+^2\cdot\chi_{t^{-\frac{1}{n}}K}(x-y)dxdy.
    \end{align*}
    Note that
    \begin{equation}\label{id1}
        (f(x)-f(y))_+^2=2\int_0^{\infty}(f(x)-r)_+\chi_{\{f<r\}}(y)dr,
    \end{equation}
    and that
    \begin{equation}\label{id2}
        (f(x)-r)_+=\int_r^{\infty}\chi_{\{f\ge s\}}(x)ds.
    \end{equation}
    By Fubini's theorem, \eqref{id1}, \eqref{id2} and the fact that $\chi_{\{f<r\}}=1-\chi_{\{f\ge r\}}$, we can rewrite the integral $J$ as
    \begin{align*}
        J=2\int_1^{\infty}\int_0^{\infty}\Big(t^{-1}|K|&\int_{\rn}(f(x)-r)_+dx\\
        &-\int_r^{\infty}\di \chi_{\{f\ge s\}}(x)\chi_{t^{-\frac{1}{n}}K}(x-y)\chi_{\{f\ge r\}}(y)dxdyds\Big)drdt.
    \end{align*}

    By the Riesz rearrangement inequality, we have
    \begin{equation}\label{rri s}
        \begin{aligned}
        \di \chi_{\{f\ge s\}}\chi_{t^{-\frac{1}{n}}K}&(x-y)\chi_{\{f\ge r\}}(y)dxdy\\
        &\leq \di \chi_{\{f\ge s\}^{\star}}\chi_{t^{-\frac{1}{n}}K^{\star}}(x-y)\chi_{\{f\ge r\}^{\star}}(y)dxdy.
    \end{aligned}
    \end{equation}
   Since $\int_{\rn}(f(x)-r)_+dx<\infty$ and $t^{-1}|K|\int_{\rn}(f(x)-r)_+dx$ is invariant under Schwarz symmetrization, \eqref{rri s} yields \eqref{in}.

  If equality holds in \eqref{in}, for almost all $r>0$, we have $(r,s,t)$ satisfying
    \begin{align*}
        \di\chi_{\{f\ge s\}}(x)
        &\chi_{t^{-\frac{1}{n}}K}(x-y)\chi_{\{f\ge r\}}(y)dxdy\\
        &=\di\chi_{\{f^{\star}\ge s\}}(x)
        \chi_{t^{-\frac{1}{n}}K^{\star}}(x-y)\chi_{\{f^{\star}\ge r\}}(y)dxdy.
    \end{align*}
    for almost all $t>1$ and $s>r$. Then for such $(r,s)$ with $s>r$ and $t>1$ sufficiently large, the assumptions of Theorem \ref{RRI B2} are fulfilled and thus
    \[\{f\ge s\}=x_1+\alpha D,~~~~ t^{-\frac{1}{n}}K=x_2+\beta D,~~~~\{f\ge r\}=x_3+\gamma D,\]
    where $D$ is a centered ellipsoid, $x_1,x_2$ and $x_3=x_1+x_2\in\rn$. Since $K=t^{\frac{1}{n}}b+(|K|/|D|)^{1/n}D$, then $D$ is not dependent on $(r,s,t)$ and $x_2=0$. Hence $x_1=x_3$ is a constant vector, which concludes the proof.
\end{proof}

We can obtain the result for $(f(x)-f(y))_-$ similarly, where $f_-$ is the negative part of $f$ given by $f_-(x)=\max\{0,-f(x)\}$. Hence we have the following Corollary.

\begin{corollary}\label{P-S-C}
Let $\beta\in(0,\frac{1}{2})$. Suppose $f\in W^{\beta,2}(\rn)$ is non-zero and non-negative and { $K\subset\rn$ is star-shaped with measurable radial function.} If
    \[\di|f(x)-f(y)|^2\cdot\max\{0,\|x-y\|_K^{-n}-1\}dxdy<\infty,\]
    then
    \[\di|f(x)-f(y)|^2\cdot\{0,\|x-y\|_K^{-n}-1\}dxdy\ge\di|f^{\star}(x)-f^{\star}(y)|^2\cdot\{0,\|x-y\|_{K^{\star}}^{-n}-1\}dxdy,\]
    with equality if and only if
    \[f(x)=f^{\star}(\phi^{-1}x-x_0),~~\text{and}~~K=\phi B,\]
    for some $\phi\in{\rm SL}(n)$, $x_0\in\rn$ and $B$ is a centered ball with $|B|=|K|$.
\end{corollary}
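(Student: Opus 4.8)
The plan is to deduce the corollary directly from Lemma~\ref{P-S} by means of the elementary pointwise splitting $|f(x)-f(y)|^2 = (f(x)-f(y))_+^2 + (f(x)-f(y))_-^2$, which holds because at every pair $(x,y)$ one of the two summands vanishes. Since each summand is dominated by $|f(x)-f(y)|^2$, the finiteness hypothesis of the corollary transfers at once to both pieces, so Lemma~\ref{P-S} applies verbatim to the $(\cdot)_+$ piece and yields
\[\di (f(x)-f(y))_+^2 \max\{0,\|x-y\|_K^{-n}-1\}\,dx\,dy \ge \di (f^\star(x)-f^\star(y))_+^2 \max\{0,\|x-y\|_{K^\star}^{-n}-1\}\,dx\,dy.\]

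For the $(\cdot)_-$ piece I would use $(f(x)-f(y))_- = (f(y)-f(x))_+$ together with the change of variables $x\leftrightarrow y$. Setting $\tilde K := -K$, one has $\|y-x\|_K = \|x-y\|_{\tilde K}$, so the $(\cdot)_-$ integral equals $\di (f(x)-f(y))_+^2 \max\{0,\|x-y\|_{\tilde K}^{-n}-1\}\,dx\,dy$. Applying Lemma~\ref{P-S} with $\tilde K$ in place of $K$, then using that $\tilde K^\star = K^\star$ (because $|{-K}| = |K|$) and that the ball $K^\star$ is origin-symmetric (so that the $x\leftrightarrow y$ substitution can be reversed), one obtains the corresponding lower bound for the $(\cdot)_-$ piece with $f^\star$ and $K^\star$. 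Adding the two inequalities gives the asserted rearrangement inequality.

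For the equality statement: equality in the sum forces equality in \emph{each} piece, and already equality in the $(\cdot)_+$ piece, via Lemma~\ref{P-S}, produces $f(x) = f^\star(\phi^{-1}x-x_0)$ and $K = \phi B$ with $\phi\in{\rm SL}(n)$, $x_0\in\rn$ and $B$ a centered ball of volume $|K|$. Conversely, under these conditions $\tilde K = -\phi B = \phi B$ since $B$ is centered, so Lemma~\ref{P-S} applied to both $K$ and $\tilde K$ gives equality in both pieces, hence in the sum.

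I do not anticipate a genuine obstacle; the only point that needs care is the bookkeeping around the reflection $\tilde K = -K$, namely that the Schwarz symmetral is unchanged under this reflection and that the reflection becomes invisible once one passes to the symmetric competitor $K^\star$ — both following from volume-preservation and the origin-symmetry of balls. A secondary point is to confirm that "obtaining the result for $(f(x)-f(y))_-$ similarly," as asserted after Lemma~\ref{P-S}, really reproduces the same equality characterization; this is automatic because the $x\leftrightarrow y$ substitution is a measure-preserving bijection of $\rn\times\rn$ that preserves all the structures (layer-cake decomposition, Riesz rearrangement, Theorem~\ref{RRI B2}) used in the proof of Lemma~\ref{P-S}.
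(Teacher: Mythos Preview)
Your proposal is correct and follows essentially the same approach as the paper, which deduces the corollary from Lemma~\ref{P-S} by treating the $(\cdot)_-$ piece ``similarly'' and adding. Your reduction of the $(\cdot)_-$ piece to Lemma~\ref{P-S} via the substitution $x\leftrightarrow y$ and $K\mapsto -K$ is a clean way to make that word ``similarly'' precise, and your handling of the equality case (using only the $(\cdot)_+$ piece for necessity, and the origin-symmetry of $B$ for sufficiency) is accurate.
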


\begin{proof}[Proof of Theorem 5.1]
    Without loss of generality, we assume $\|f\|_2=1$.  We keep using the expression of $g$ defined in \eqref{g-def}, but use $g(tu)$ instead of $g(t)$, where $u\in\sn$. Recall that
    \[\log\rho_{\rR_0 f}(u)=\int_0^1\frac{g(t,u)-1}{t}dt+\int_1^{\infty}\frac{g(t,u)}{t}dt+C.\]
    Then
    \begin{align*}
    n|K|\tilde{V}_{\log}(K,\rR_0 f)&=\int_{\sn}\rho_K(u)^n\log\Big(\frac{\rho_{\rR_0f}(u)}{\rho_K(u)}\Big)du\\
    &=\int_{\sn}\rho_K(u)^n\Big(\int_0^1\frac{g(t,u)-1}{t}dt+\int_1^{\infty}\frac{g(t,u)}{t}dt-\int_1^{\rho_K(u)}\frac{1}{t}dt\Big)du+C\\
    &=\int_{\sn}\rho_K(u)^n\Big(\int_0^{\rho_K(u)}\frac{g(t,u)-1}{t}dt+\int_{\rho_K(u)}^{\infty}\frac{g(t,u)}{t}dt\Big)du+C.
    \end{align*}
    By polar coordinate, we have
    \begin{align*}
        n|K|\tilde{V}_{\log}(K,\rR_0f)=&-\di \frac{f(x)(f(x)-f(y))\chi_{(0,1)}(\|x-y\|_K)}{\|x-y\|_K^n}dxdy\\
        &+\di \frac{f(x)f(y)\chi_{[1,\infty)}(\|x-y\|_K)}{\|x-y\|_K^n}dxdy.
    \end{align*}
    
    Observing that
    \[\chi_{(0,1)}(\|x-y\|_K)\cdot \|x-y\|_K^{-n}=\chi_{(0,1)}(\|x-y\|_K)+\max\{0,\|x-y\|_K^{-n}-1\},\]
    and
    \[\chi_{[1,\infty)}(\|x-y\|_K)\cdot \|x-y\|_K^{-n}=\min\{1,\|x-y\|_K^{-n}\}-\chi_{(0,1)}(\|x-y\|_K),\]
    we then have
    \begin{align*}
        n|K|\tilde{V}_{\log}(K,\rR_0f)=&-\frac{1}{2}\di |f(x)-f(y)|^2\cdot\max\{0,\|x-y\|_K^{-n}-1\}dxdy\\
        &+\di f(x)f(y)\cdot \min\{1,\|x-y\|_K^{-n}\}dxdy+|K|.
    \end{align*}
    
    Since $|K|=|K^{\star}|$, combined with Corollary \ref{P-S-C} and the Riesz rearrangement inequality, we have
    \[\tilde{V}_{\log}(K,\rR_0 f)\leq \tilde{V}_{\log}(K^{\star},\rR_0 f^{\star}),\]
    {with equality if and only if}
    \[f(x)=f^{\star}(\phi^{-1}x-x_0)~~\text{and}~~K=\phi B,\]
    where $\phi\in{\rm SL}(n)$, $x_0\in\rn$ and $B$ is a centered ball.
\end{proof}

By Theorem \ref{main 5}, we also obtain the following result, which will be used to characterize the optimizers of the affine logarithmic Sobolev inequality stated in Theorem \ref{main 2}.

\begin{corollary}\label{rri vol}
    Let $\beta\in(0,\frac{1}{2})$. Suppose $f\in W^{\beta,2}(\rn)$ is non-negative. Then
    \[|\rR_0 f|\leq |\rR_0 f^{\star}|,\]
    and equality holds if and only if $f$ is a translate of $f^{\star}\circ\phi^{-1}$ for some $\phi\in{\rm SL}(n)$.
\end{corollary}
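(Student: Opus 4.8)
The statement is a direct consequence of Theorem~\ref{main 5}, obtained by using $\rR_0f$ itself as the test set $K$. After discarding the trivial case $f\equiv 0$, I would first record that $\rR_0f$ is an admissible choice in \eqref{rri v-log}: by Lemma~\ref{0 well-defined} its radial function is measurable and finite almost everywhere, so $|\rR_0f|>0$, and it is also finite (this is the one step that needs a little care; it follows from the upper bound for $\log\rho_{\rR_0f}$ obtained in the proof of Lemma~\ref{0 well-defined}, or, after the harmless normalization $\|f\|_2=1$ — under which $\rR_0f$ is unchanged — from the finite upper bound on $|\rR_0f|$ provided by Theorem~\ref{main 2}).

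Plugging $K=\rR_0f$ into \eqref{rri v-log}, the left-hand side vanishes, $\tilde V_{\log}(\rR_0f,\rR_0f)=0$, so Theorem~\ref{main 5} yields $0\le\tilde V_{\log}\big((\rR_0f)^{\star},\rR_0f^{\star}\big)$. Now $(\rR_0f)^{\star}$ and $\rR_0f^{\star}$ are both centered balls — the latter because $f^{\star}$ is radially symmetric, so $\rho_{\rR_0f^{\star}}$ is constant — hence dilates of one another, so \eqref{dual mix vol ineq-log} holds with equality between them: $\tilde V_{\log}\big((\rR_0f)^{\star},\rR_0f^{\star}\big)=\tfrac1n\log\big(|\rR_0f^{\star}|/|(\rR_0f)^{\star}|\big)=\tfrac1n\log\big(|\rR_0f^{\star}|/|\rR_0f|\big)$. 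Therefore $\tfrac1n\log\big(|\rR_0f^{\star}|/|\rR_0f|\big)\ge 0$, i.e. $|\rR_0f|\le|\rR_0f^{\star}|$.

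For the equality case: if $|\rR_0f|=|\rR_0f^{\star}|$ then $\tilde V_{\log}\big((\rR_0f)^{\star},\rR_0f^{\star}\big)=0=\tilde V_{\log}(\rR_0f,\rR_0f)$, so equality holds in \eqref{rri v-log} with $K=\rR_0f$, and the equality characterization of Theorem~\ref{main 5} forces $f(x)=f^{\star}(\phi^{-1}x-x_0)$ for some $\phi\in{\rm SL}(n)$, $x_0\in\rn$ — that is, $f$ is a translate of $f^{\star}\circ\phi^{-1}$. Conversely, if $f$ is a translate of $f^{\star}\circ\phi^{-1}$ for some $\phi\in{\rm SL}(n)$, then by the translation invariance and ${\rm SL}(n)$-covariance of $\rR_0$ one has $\rR_0f=\rR_0(f^{\star}\circ\phi^{-1})=\phi\,\rR_0f^{\star}$, whence $|\rR_0f|=|\det\phi|\,|\rR_0f^{\star}|=|\rR_0f^{\star}|$.

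I expect the only real obstacle to be the finiteness $|\rR_0f|<\infty$, which is what makes $\rR_0f$ a legitimate test body; once Theorem~\ref{main 5} is granted, everything else is formal. An alternative that sidesteps this point is to prove $|\rR_0f|\le|\rR_0f^{\star}|$ first for compactly supported $f$ — where it follows by letting $\alpha\to 0^+$ in the known inequality $|\rR_\alpha f|\le|\rR_\alpha f^{\star}|$ ($\alpha>0$) together with the continuity in Corollary~\ref{conti vol} — and then passing to $f_j=f\cdot\chi_{B_j}$ via Lemma~\ref{mono conv}; but that route is clumsier for the equality statement, so I would prefer the argument above.
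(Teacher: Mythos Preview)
Your proof is correct and follows essentially the same approach as the paper: set $K=\rR_0f$ in Theorem~\ref{main 5}, note $\tilde V_{\log}(\rR_0f,\rR_0f)=0$, and combine with the dual mixed volume inequality~\eqref{dual mix vol ineq-log} to get $0\le\tfrac1n\log(|\rR_0f^{\star}|/|\rR_0f|)$. Your write-up is in fact a bit more careful than the paper's --- you check admissibility of $K=\rR_0f$, observe that the final step is actually an equality since both bodies are centered balls, and you supply the (easy) converse direction of the equality characterization, which the paper omits.
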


\begin{proof}
    By Theorem \ref{main 5} and the dual mixed volume inequality \eqref{dual mix vol ineq-log}, there is
    \[0=\tilde{V}_{\log}(\rR_0 f, \rR_0 f)\leq \tilde{V}_{\log}((\rR_0 f)^{\star}, \rR_0 f^{\star})\leq \frac{1}{n}\log\Big(\frac{|\rR_0 f^{\star}|}{|\rR_0 f|}\Big),\]
    with equality if and only if
    \[f(x)=f^{\star}(\phi^{-1}x-x_0),\]
    for some $\phi\in{\rm SL}(n)$ and $x_0\in\rn$.
\end{proof}

Before characterizing the equality cases of the affine logarithmic Sobolev inequality, we first illustrate how Theorem \ref{main 5} can be applied to clarify the equality conditions in Beckner’s logarithmic Sobolev inequality. To this end, we begin by recalling two results from Carlen and Loss \cite{CL}.

\begin{lem}\cite[Lemma 3]{CL}\label{cl lem 3}
    Let $f\in L^1(\rn)$ be a non-negative,  symmetric decreasing function and assume that for some $o\neq b\in\rn$, there exists some $c\in\rn$ and a  symmetric decreasing function $g$ such that
    \[IT_bf=T_cg,\]
    where
    \begin{equation}\label{trans}
        T_bf(x):=f(x-b),~~~~If(x)=|x|^{-2n}f\Big(\frac{x}{|x|^2}\Big).
    \end{equation}
    Then $f$ is uniformly bounded and $f(x)\leq C|x|^{-2n}$ for some constant $C$. Furthermore, the vector $c$ is given by
    \[c=b\int_{\rn}f(x)^{1+1/n}dx\bigg/\left(\int_{\rn}|x|^2f(x)^{1+1/n}+|b|^2\int_{\rn}f(x)^{1+1/n}dx\right).\]
\end{lem}

\begin{lem}\cite[Theorem 4]{CL}\label{cl thm 4}
    Let $f$ be as in Lemma \ref{cl lem 3} but assume in addition that
    \begin{equation}\label{cond 2}
    \int_{\rn}f(x)^{1+1/n}dx=\int_{\rn}|x|^2f(x)^{1+1/n}dx,
    \end{equation}
    and $b$ is chosen such that the angle $\phi$ defined by $\sin\phi=|b|/(1+|b|^2)^{1/2}$ is a non-rational multiple of $\pi$ if $n=1$ and $\phi$ is not an integer multiple of $\pi/4$ if $n\ge 2$. Then
    \[f(x)=C(1+|x|^2)^{-n}\]
    for some constant $C$.
\end{lem}

Building on the previous Lemma \ref{cl lem 3}, Lemma \ref{cl thm 4} and Theorem \ref{main 5}, we turn to the characterization of equality cases in Beckner's logarithmic Sobolev inequality.

\begin{thm}\label{cha}
For $\beta\in(0,\frac{1}{2})$, let { $f\in W^{\beta,2}(\rn)$} be a non-negative function such that $\|f\|_2=1$ and  
 $\int_{\rn}f(x)^2\log f(x)dx$ is finite. If
    \begin{align*}
    \gamma_0-\frac{2}{n}\int_{\rn}f(x)^2\log f(x)dx=\frac{1}{n\omega_n}\int_{\sn}\log\rho_{{\rm R}_0f}(u)du,
    \end{align*}
then
\[f(x)=a(1+\lambda |x-x_0|^2)^{-n/2},\]
for $x_0\in\rn$ and some $a,\lambda>0$.
\end{thm}

\begin{proof}
By the Riesz rearrangement theorem, we have
\begin{equation*}
    \di\frac{f(x)f(y)}{|x-y|^{n-\alpha}}dxdy\leq \di \frac{f^{\star}(x)f^{\star}(y)}{|x-y|^{n-\alpha}}dxdy,
\end{equation*}
and Theorem \ref{main 2} implies that
 \begin{align}\label{refined B-log}
 \gamma_0-\frac{2}{n}\int_{\rn}f(x)^2\log f(x)dx
\ge\frac{1}{n\omega_n}\int_{\sn}\log\rho_{\rR_0 f^{\star}}(u)du\ge\frac{1}{n\omega_n}\int_{\sn}\log\rho_{\rR_0 f}(u)du,
\end{align}
where the first term is invariant under Schwarz symmetrization. Note that
\[\frac{1}{n\omega_n}\int_{\sn}\log\rho_{\rR_0 f}(u)du=\tilde{V}_{\log}(B^n,\rR_0 f).\]
Then if equality holds in Beckner's logarithmic Sobolev inequality, by \eqref{refined B-log}, we have
\[\tilde{V}_{\log}(B^n,\rR_0 f)=\tilde{V}_{\log}(B^n,\rR_0 f^{\star}).\]
Theorem \ref{main 5} then implies that $f$ is a translate of $f^{\star}$.

Hence the optimizer of Beckner's logarithmic Sobolev inequality can only be a symmetric decreasing function up to a translation. If $f$ is an optimizer, by the conformal invariance of the logarithmic Sobolev inequality, 
\begin{equation*}
\begin{aligned}
    &T_bf(x):=f(x-b),\\
    &\tilde{I}f(x):=|x|^{-n}f(\frac{x}{|x|^2}),
\end{aligned}    
\end{equation*}
and $\tilde{I}T_bf$ are also optimizers, where $b\in\rn$ is arbitrary. Therefore there is a symmetric decreasing function $g$ and $c\in\rn$ such that
\begin{equation}\label{eq 2}
    \tilde{I}T_bf=T_c g.
\end{equation}

By the definition  of $T_b$, $I$ and $I_2$, we have $(\tilde{I}f)^2=I(f^2)$ and $(T_b f)^2=T_b(f^2)$. Hence \eqref{eq 2} yields
\[IT_b(f^2)=T_c(g^2),\]
where $f^2\in L^1(\rn)$ is a  symmetric decreasing function. As the preceding identity meets the hypotheses of Lemma \ref{cl lem 3}, it follows that $f^2$ is uniformly bounded and $f(x)^2\leq C|x|^{-2n}$. 

It remains to verify that $f^2$ fulfills condition \eqref{cond 2}. If not, set
\[f_{\lambda}^2(x)=f(\lambda x)^2,~~~~ \lambda=\left(\frac{\int_{\rn}|x|^2f(x)^{2+2/n}dx}{\int_{\rn}f(x)^{2+2/n}dx}\right)^{1/2},\]
where $\lambda$ is well-defined thanks to the uniform bound on $f$ and the decay $f(x)^2\leq C|x|^{-2n}$. Then function $f_{\lambda}(x)^2$ remains $L^1$-integrable, non-negative, symmetric decreasing, and it satisfies \eqref{cond 2}. 
Finally, by Lemma \ref{cl thm 4}, we obtain
\[f(x)=a(1+\lambda |x-x_0|^2)^{-n/2},\]
where $x_0\in\rn$ and $a,\lambda>0$ such that $\|f\|_2=1$.
\end{proof}

By Proposition \ref{eqiv}, Beckner's logarithmic Sobolev inequality can be written as
\begin{equation*}\label{B-log}
    \gamma_0-\frac{2}{n}\int_{\rn}f(x)^2\log f(x)dx
\ge\frac{1}{n\omega_n}\int_{\sn}\log\rho_{\rR_0 f}(u)du.
\end{equation*}
Consequently, Theorem \ref{cha}, together with the straightforward sufficiency given later, fully describes its equality cases.

Theorem \ref{cha} will in turn play a crucial role in deriving the equality cases for the affine logarithmic Sobolev inequality, which we now consider.

\begin{thm}[Characterization of optimizers]
    Let $\beta\in(0,\frac{1}{2})$. Suppose $f\in W^{\beta,2}(\rn)$ is non-negative and $\|f\|_2=1$. If
    \begin{equation}\label{eq1}
        \gamma_0-\frac{2}{n}\int_{\rn}f(x)^2\log f(x)dx=\frac{1}{n}\log\Big(\frac{|\rR_0f|}{\omega_n}\Big),
    \end{equation}
    then
    \[f(x)=a(1+|\phi(x-x_0)|^2)^{-\frac{n}{2}},\]
    for $x_0\in\rn$, $a>0$ and $\phi\in{\rm GL}(n)$ such that $\|f\|_2=1$.
\end{thm}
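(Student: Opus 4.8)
The plan is to reduce the equality case of the affine logarithmic Sobolev inequality to the known conformal rigidity argument for Beckner's (Euclidean) logarithmic Sobolev inequality, exactly as was sketched for the Euclidean case earlier in Section 5. The starting point is the chain
\[
\gamma_0-\frac{2}{n}\int_{\rn}f(x)^2\log f(x)dx
\ge\frac{1}{n}\log\Big(\frac{|\rR_0 f|}{\omega_n}\Big)
\ge\frac{1}{n}\log\Big(\frac{|\rR_0 f^{\star}|}{\omega_n}\Big)
\ge\frac{1}{n\omega_n}\int_{\sn}\log\rho_{\rR_0 f^{\star}}(u)du,
\]
where the first inequality is Theorem~\ref{main 2}, the second is Corollary~\ref{rri vol} (which says $|\rR_0 f|\le|\rR_0 f^{\star}|$), and the third is Jensen's inequality applied to $\rR_0 f^{\star}$. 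The last term equals the right-hand side of Beckner's inequality for $f^{\star}$ after using Proposition~\ref{eqiv}, and for the radially symmetric function $f^{\star}$ one has $|\rR_0 f^{\star}|=\tfrac{1}{n}\int_{\sn}\rho_{\rR_0 f^{\star}}(u)^n du$ with $\rho_{\rR_0 f^{\star}}$ constant, so in fact the second and third inequalities above are equalities for $f^{\star}$.

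\textbf{Step 1: equality forces $f$ to be a translate of a linear image of $f^{\star}$.} If \eqref{eq1} holds, then in particular $|\rR_0 f|=|\rR_0 f^{\star}|$, so Corollary~\ref{rri vol} applies and gives $f(x)=f^{\star}(\phi^{-1}x-x_0)$ for some $\phi\in{\rm SL}(n)$ and $x_0\in\rn$. Thus $f$ is, up to an equimeasurable $\mathrm{SL}(n)$ change of variables and a translation, its own symmetric decreasing rearrangement.

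\textbf{Step 2: reduce to the Euclidean optimizer problem.} Because the affine quantities are invariant under volume-preserving linear maps and translations (the left side of \eqref{eq1} because $\int f^2\log f$ is unchanged by $f\mapsto f\circ\phi^{-1}$ with $\phi\in{\rm SL}(n)$ and by translation, and $|\rR_0 f|$ because $\rR_0(f\circ\phi^{-1})=\phi\,\rR_0 f$), equality \eqref{eq1} for $f$ is equivalent to equality \eqref{eq1} for $g:=f^{\star}$. But $g$ is radially symmetric, so for $g$ the first inequality of Theorem~\ref{main 2} collapses (by Jensen with a constant radial function, and by $|\rR_0 g|=\tfrac1n\int_{\sn}\rho_{\rR_0 g}^n du$) to Beckner's logarithmic Sobolev inequality in the form of Proposition~\ref{eqiv}. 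Hence $g$ is an extremizer of Beckner's inequality.

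\textbf{Step 3: the conformal rigidity argument.} Now invoke the argument laid out in the text between Theorem~\ref{main 5} and this theorem: Beckner's logarithmic Sobolev inequality is invariant under the translations $T_b f(x)=f(x-b)$ and the inversion $If(x)=|x|^{-n}f(x/|x|^2)$, so if $g$ is an extremizer then so is $IT_b g$ for every $b$. Choosing $b$ generically, $IT_b g$ is no longer radial, yet Step~1 applied to the extremizer $IT_b g$ shows it must be a translate of a linear image of its own rearrangement; comparing the two descriptions forces an algebraic relation $IT_b g=T_c h$ with $h$ radial and $c\in\rn$. Following \cite[Lemma~3, Theorem~4]{LL}, solving this functional equation pins down $g$ to be of the form $g(x)=a(1+\lambda|x|^2)^{-n/2}$ with $a,\lambda>0$. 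Undoing the reduction of Step~2, i.e.\ $f=g\circ\phi^{-1}(\cdot-x_0)$ with $\phi\in{\rm SL}(n)$, and absorbing $\lambda$ and the scaling of $\phi$ into a single $\phi\in{\rm GL}(n)$, we obtain $f(x)=a(1+|\phi(x-x_0)|^2)^{-n/2}$ with $\|f\|_2=1$, as claimed.

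\textbf{Main obstacle.} The delicate point is Step~3: one must be sure the conformal invariances $T_b$ and $I$ genuinely preserve \emph{all three} quantities in \eqref{eq1} (or at least the two that matter after the reduction), and that the resulting family of extremizers can only be radial after a suitable conformal motion — this is where the borrowed analysis of \cite[Lemma~3, Theorem~4]{LL} does the real work and must be quoted carefully, since the affine setting a priori enlarges the symmetry group and one has to check the rigidity conclusion is not weakened by that enlargement. Verifying that the inversion $I$ does not leave the class $W^{\beta,2}(\rn)$ and that $\int f^2\log f$ transforms with an explicitly computable Jacobian factor that cancels in the inequality is the routine-but-necessary bookkeeping behind this step.
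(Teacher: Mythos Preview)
Your overall strategy matches the paper's: use Corollary~\ref{rri vol} to deduce $|\rR_0 f|=|\rR_0 f^{\star}|$ and hence $f(x)=f^{\star}(\phi^{-1}x-x_0)$, then observe that $f^{\star}$ must be an extremizer of Beckner's inequality, and finally quote the characterization of those extremizers already worked out in the discussion preceding this theorem.

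There is, however, a genuine slip in your displayed chain. You write
\[
\frac{1}{n}\log\Big(\frac{|\rR_0 f|}{\omega_n}\Big)\ge\frac{1}{n}\log\Big(\frac{|\rR_0 f^{\star}|}{\omega_n}\Big)
\]
and justify it by Corollary~\ref{rri vol}, which you yourself say gives $|\rR_0 f|\le|\rR_0 f^{\star}|$ --- the opposite direction. As written, the chain is not a string of $\ge$'s, and equality in \eqref{eq1} alone (the \emph{first} link) does not force the remaining links to collapse. The fix is exactly what the paper does: swap the two middle terms, i.e.\ apply Theorem~\ref{main 2} to $f^{\star}$ (using that $\int f^2\log f=\int (f^{\star})^2\log f^{\star}$) to get
\[
\gamma_0-\frac{2}{n}\int_{\rn}f^2\log f\,dx\ \ge\ \frac{1}{n}\log\Big(\frac{|\rR_0 f^{\star}|}{\omega_n}\Big)\ \ge\ \frac{1}{n}\log\Big(\frac{|\rR_0 f|}{\omega_n}\Big),
\]
and then \eqref{eq1} really does force $|\rR_0 f|=|\rR_0 f^{\star}|$.

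Your Step~3 is more elaborate than needed: the paper has already established the equality case of Beckner's inequality in the paragraphs between Theorem~\ref{main 5} and the present theorem, so here one simply quotes that $f^{\star}(x)=a(1+\lambda|x|^2)^{-n/2}$. The concerns you raise in the ``Main obstacle'' paragraph (conformal invariance of the \emph{affine} quantities, behavior of $I$ on $W^{\beta,2}$) are not actually needed for this theorem, since the conformal rigidity is applied only to the Euclidean Beckner inequality for the radially symmetric $f^{\star}$, not to the affine inequality for $f$.
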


\begin{proof}
    By Corollary \ref{rri vol} and Theorem \ref{main 2}, we have
        \[\gamma_0-\frac{2}{n}\int_{\rn}f(x)^2\log f(x)dx\ge\frac{1}{n}\log\Big(\frac{|\rR_0f^{\star}|}{\omega_n}\Big)\ge\frac{1}{n}\log\Big(\frac{|\rR_0f|}{\omega_n}\Big).\]
    Then \eqref{eq1} implies $|\rR_0 f|=|\rR_0 f^{\star}|$ and thus $f$ is a translate of $f^{\star}\circ \phi^{-1}$ for some $\phi\in{\rm SL}(n)$.

    Note that for $f^{\star}$ satisfying \eqref{eq1}, we have
    \[\gamma_0-\frac{2}{n}\int_{\rn}f(x)^2\log f(x)dx=\frac{1}{n}\log\Big(\frac{|\rR_0f^{\star}|}{\omega_n}\Big)=\frac{1}{n\omega_n}\int_{\sn}\log\rho_{\rR_0 f^{\star}}(u)du.\]
    Hence $f^{\star}$ is an optimizer of Beckner's logarithmic Sobolev inequality, which is given by
    \[f^{\star}(x)=a(1+\lambda|x|^2)^{-\frac{n}{2}},\]
    for $a,\lambda>0$ such that $\|f^{\star}\|_2=1$. This concludes the proof.
\end{proof}


For the sufficient condition of the equality cases, by applying the continuity of $\rho_{\Ra f}$ with respect to $\alpha$, we provide a simple proof. By the equality cases of the affine HLS inequalities for $\alpha\in(0,n)$, 
\[f_{\alpha}(x)=(1+|x|^2)^{-\frac{n+\alpha}{2}}\]
is an optimizer. 
Then we have
\[\gamma_{n,\alpha}\|f_{\alpha}\|_{\frac{2n}{n+\alpha
}}^2=\frac{\|f_{\alpha}\|_2^2}{\alpha}n\omega_n^{\frac{n-\alpha}{n}}|\Ra f_{\alpha}|^{\frac{\alpha}{n}}.\]
Our approach is to establish the continuity of $|\Ra f_{\alpha}|$ at $\alpha=0$, and then we can differentiate both sides and attain the equality in \eqref{ineq 0}. 



Since $f_{\alpha}$ is radially symmetric which implies that $\rho_{\Ra f_{\alpha}}(u)$ takes the same value for all $u\in\sn$. Then let $u\in\sn$ be fixed. We first have the following result from the equality case of the affine HLS inequality.

\begin{lem}\label{conti a}
    For {$\alpha\in(0,n)$} and $f_{\alpha}(x)=(1+|x|^2)^{-\frac{n+\alpha}{2}}$,
    \[\rho_{\Rf_{\alpha}}(u)^{\alpha}=\frac{\alpha}{\|f_{\alpha}\|_2^2}\int_{0}^{\infty}t^{\alpha-1}\int_{\rn}f_{\alpha}(x)f_{\alpha}(x+tu)dxdt,\]
    is continuous with respect to $\alpha$.
\end{lem}

\begin{proof}
    Using the equality cases in the affine HLS inequalities,
    \[\gamma_{n,\alpha}\|f_{\alpha}\|_{\frac{2n}{n+\alpha}}^2=n\omega_n^{\frac{n-\alpha}{n}}|\Sa_{\alpha}|^{\frac{\alpha}{n}}=\frac{\|f_{\alpha}\|_2^2}{\alpha}n^{1+\frac{\alpha}{n}}\omega_n\rho_{\Ra f_{\alpha}}(u)^{\alpha},\]
    and therefore
    \[\rho_{\Ra f_{\alpha}}(u)^{\alpha}=\frac{\alpha\gamma_{n,\alpha}}{\|f_{\alpha}\|_2^2\omega_nn^{1+\frac{\alpha}{n}}}\left(\int_{\rn}(1+|x|^2)^{-n}dx\right)^{\frac{n+\alpha}{n}}.\]

Note that, by polar coordinate, 
\begin{equation*}\label{l2}
\begin{aligned}
\|f_{\alpha}\|_2^2=n\omega_n\int_{0}^{\infty}(1+r^2)^{-(n+\alpha)}r^{n-1}dr=\frac{n\omega_n}{2}\int_0^1t^{\frac{n}{2}+\alpha}(1-t)^{\frac{n}{2}-1}dt
\end{aligned}
\end{equation*}
 is the beta function. Therefore $\|f_{\alpha}\|_2^2$ is continuous for $\alpha>-\frac{n}{2}$ and thus $\rho_{\Ra f_{\alpha}}(u)$ is continuous with respect to $\alpha>0$. 
\end{proof}

By the proof of Lemma \ref{conti for cs},  we can similarly establish the continuity of $\rho_{\Rf_{\alpha}}(u)$ at $\alpha=0$, which is
\begin{equation}\label{conti 0}
    \lim_{\alpha\rightarrow 0^+}\rho_{\Rf_{\alpha}}(u)=\rho_{\rR_0f_0}(u).
\end{equation}
Then we have the following result which concludes the proof of Theorem 2.

\begin{thm}\label{suff-thm}
    There is equality in the first inequality of \eqref{ineq 0} if $f(x)=a(1+\phi|x-x_0|^2)^{-\frac{n}{2}}$, where $x_0\in\rn$, $a>0$ and $\phi\in {\rm GL(n)}$ such that $\|f\|_2=1$. There is equality in the second inequality of \eqref{ineq 0} if $f$ is radially symmetric.   
\end{thm}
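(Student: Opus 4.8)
I will treat the two inequalities of \eqref{ineq 0} separately, the second being essentially immediate. If $f$ is radially symmetric, then for each $t>0$ the integral $\int_{\rn}f(x)f(x+tu)\,dx$ does not depend on $u\in\sn$, so by the defining formula $\log\rho_{\rR_0f}(u)$ equals a constant $\ell$ for almost all $u\in\sn$ (finite, since $f\in W^{\beta,2}(\rn)$, by Lemma~\ref{0 well-defined}). Consequently $|\rR_0f|=\tfrac1n\int_{\sn}\rho_{\rR_0f}(u)^n\,du=\omega_n e^{n\ell}$, whence both $\tfrac1n\log(|\rR_0f|/\omega_n)$ and $\tfrac1{n\omega_n}\int_{\sn}\log\rho_{\rR_0f}(u)\,du$ equal $\ell$, i.e.\ Jensen's inequality (the second inequality of \eqref{ineq 0}) is an equality.

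For the first inequality I will argue by the diagonal limit anticipated in the paragraph before the statement. First a reduction: a function $f(x)=a(1+|\phi(x-x_0)|^2)^{-n/2}$ with $\phi\in{\rm GL}(n)$ can be written as $a\,f_0\!\big(\lambda\psi(x-x_0)\big)$ with $f_0(x)=(1+|x|^2)^{-n/2}$, $\lambda=|\det\phi|^{1/n}>0$ and $|\det\psi|=1$. Since $\rR_0$ is translation invariant, since $\rR_0(cf)=\rR_0f$, and since $\rR_0(f\circ T^{-1})=T\rR_0f$ for \emph{every} $T\in{\rm GL}(n)$ (obtained by letting $\alpha\to0$ in $\rR_\alpha(f\circ T^{-1})=T\rR_\alpha f$, which holds for all $T$ because the normalization $\alpha/\|f\|_2^2$ in $\rR_\alpha$ absorbs $|\det T|$), a routine bookkeeping with logarithms shows that the un-normalized form of the first inequality of \eqref{ineq 0},
\[
\gamma_0+\tfrac1n\log\|f\|_2^2-\tfrac2{n\|f\|_2^2}\int_{\rn}f(x)^2\log f(x)\,dx\ \ge\ \tfrac1n\log\!\Big(\tfrac{|\rR_0f|}{\omega_n}\Big),
\]
is invariant along this orbit. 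Hence it suffices to verify equality here for $f=f_0$ (which lies in $W^{\beta,2}(\rn)$ for every $\beta\in(0,\tfrac12)$, since $(1+|x|^2)^{-n/2}$ is real-analytic and square-integrable so that $\hat f_0$ decays exponentially, and has $\int f_0^2\log f_0$ finite), i.e.\ to prove the identity
\[
\gamma_0+\tfrac1n\log\|f_0\|_2^2-\tfrac2{n\|f_0\|_2^2}\int_{\rn}f_0(x)^2\log f_0(x)\,dx=\tfrac1n\log\!\Big(\tfrac{|\rR_0f_0|}{\omega_n}\Big).
\]

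To obtain this identity I will run the derivation of \eqref{ineq 0} along the family $f_\alpha(x)=(1+|x|^2)^{-(n+\alpha)/2}$, $\alpha\in(0,n)$, a radial optimizer of the affine HLS inequality. Using $\rho_{\rS_\alpha f_\alpha}=(\|f_\alpha\|_2^2/\alpha)^{1/\alpha}\rho_{\rR_\alpha f_\alpha}$ and $|\rR_\alpha f_\alpha|=\omega_n\rho_{\rR_\alpha f_\alpha}(u)^n$, equality in the first affine HLS inequality at $f_\alpha$ becomes
\[
\frac{\alpha\,\gamma_{n,\alpha}\,\|f_\alpha\|_{\frac{2n}{n+\alpha}}^2}{\|f_\alpha\|_2^2}=n\omega_n\Big(\frac{|\rR_\alpha f_\alpha|}{\omega_n}\Big)^{\alpha/n},
\]
with both sides tending to $n\omega_n$ as $\alpha\to0^+$. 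I will subtract $n\omega_n$, divide by $n\omega_n\alpha$, and take the limit. On the right, Lemma~\ref{conti a} together with \eqref{conti 0} give $\rho_{\rR_\alpha f_\alpha}(u)\to\rho_{\rR_0f_0}(u)$, so $|\rR_\alpha f_\alpha|\to|\rR_0f_0|>0$ continuously; hence by \eqref{fund} the right-hand side tends to $\tfrac1n\log(|\rR_0f_0|/\omega_n)$. On the left, the computation is made transparent by two structural facts: $f_\alpha^{2n/(n+\alpha)}=(1+|x|^2)^{-n}$ is independent of $\alpha$, so $\|f_\alpha\|_{2n/(n+\alpha)}^2=\big(\int_{\rn}(1+|x|^2)^{-n}dx\big)^{(n+\alpha)/n}$ with base $\|f_0\|_2^2$; and $\alpha\mapsto\|f_\alpha\|_2^2=\int_{\rn}(1+|x|^2)^{-(n+\alpha)}dx$ is smooth (a beta-function integral). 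Differentiating the left-hand product at $\alpha=0$, using $\tfrac{d}{d\alpha}\big|_{\alpha=0}\,\alpha\gamma_{n,\alpha}=n\omega_n\gamma_0$ and $\log(1+|x|^2)=-\tfrac2n\log f_0$, reproduces exactly the displayed identity. Equating the two limits gives equality at $f_0$, and the reduction above then delivers equality for every $f$ of the stated form with $\|f\|_2=1$.

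The main obstacle is precisely the \emph{diagonal} character of this limit: both the index $\alpha$ and the function $f_\alpha$ move, so the $\alpha$-continuity results of Section~4 (Lemma~\ref{conti for cs}, Corollary~\ref{conti vol}) do not apply verbatim — this is exactly why Lemma~\ref{conti a} and \eqref{conti 0} were established specifically for the family $(f_\alpha)$, and why the two closed-form facts about $f_\alpha$ above are what allow the left-hand limit to be identified, after renormalization, with $\gamma_0-\tfrac2n\int f^2\log f$. Everything else — the bookkeeping in the affine reduction and the one-variable differentiations at $\alpha=0$ — is routine.
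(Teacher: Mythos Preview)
Your proposal is correct and takes essentially the same approach as the paper: for the second inequality both argue via constancy of $\rho_{\rR_0 f}$ for radial $f$, and for the first inequality both pass to the limit along the diagonal family $f_\alpha=(1+|x|^2)^{-(n+\alpha)/2}$ using Lemma~\ref{conti a} and \eqref{conti 0} on the right-hand side and differentiate the affine HLS equality at $\alpha=0$. Your write-up is somewhat more explicit than the paper's --- you spell out the affine/translation/scaling reduction to $f_0$ and the observation $f_\alpha^{2n/(n+\alpha)}=(1+|x|^2)^{-n}$ that makes the left-hand differentiation transparent --- but the argument is the same.
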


\begin{proof}    
    For the first inequality. Let $f_{\alpha}(x)=c\left(1+\lambda|x|^2\right)^{-\frac{n+\alpha}{2}}$, where $c, \lambda>0$ such that $\|f_0\|_2=1$. Since $f_{\alpha}$ is the optimal function of the affine HLS inequality, there is
    
    \begin{equation*}\label{eqn}
    \alpha\gamma_{n,\alpha}\|f_{\alpha}\|^2_{\frac{2n}{n+\alpha}}=\|f_{\alpha}\|_2^2n\omega_n\Big(\frac{|\rR_{\alpha}f_{\alpha}|}{\omega_n}\Big)^{\frac{\alpha}{n}}.
    \end{equation*}

    By \eqref{conti 0} and the fact that $\Rf_{\alpha}$ is a ball, we can do the differentiation on both sides as $\alpha\rightarrow 0^+$, and obtain
    \[\omega_n\|f_0\|_2^2\log\Big(\frac{|\rR_0f_0|}{\omega_n}\Big)=\gamma_0n\omega_n\|f_0\|_2^2+\omega_n\|f_0\|_2^2\log\|f_0\|_2^2-2\omega_n\int_{\rn}f_0(x)^2\log f_0(x)dx,\]
    where $\gamma_0$ is given in Theorem \ref{main 2}. Since $\|f_0\|_2=1$, the equation we obtain is
    \[\frac{1}{n}\log\Big(\frac{|\rR_0 f_0|}{\omega_n}\Big)=\gamma_0-\frac{2}{n}\int_{\rn}f_0(x)^2\log f_0(x)dx.\]
    Since $f_0$ is radially symmetric, all the terms in inequality \eqref{ineq 0} coincide.

    For the second inequality, since it is directly from Jensen's inequality, equality holds if and only if $\rR_0f$ is a ball. Hence, equality holds in the second inequality if $f$ is radially symmetric.
\end{proof}
\vskip 12pt

\noindent{\bf Acknowledgements:} 
This research was funded in whole or in part by the Austrian Science Fund (FWF) doi/10.55776/37030. For open access purposes, the author has applied a CC BY public copyright license to any author accepted manuscript version arising from this submission.

\section*{Declaration}
The author declares that this manuscript is her original work, has not been published elsewhere, and is not under consideration by any other journal.

The author has no conflicts of interest to disclose. All research conducted in this work complies with the ethical standards of the author’s institution.

\bibliographystyle{abbrv}

\end{document}